\newif\ifTR

\TRtrue

\ifTR

\documentclass[a4paper,11pt]{article}
\usepackage{authblk}
\usepackage{natbib}
\usepackage[cm]{fullpage}
\usepackage{caption}
\usepackage{subcaption}
\usepackage{amsthm}
\usepackage{amsmath}
\usepackage{amssymb}
\usepackage{adjustbox}

\allowdisplaybreaks[0]
\newtheorem{definition}{Definition}

\newcommand{\TABLE}[3]{%
	\centering
	\caption{#1}%
	\adjustbox{max width=\linewidth}{#2}%
	\ifx\relax#3\relax\else
	\par\smallskip\noindent\footnotesize #3%
	\fi
}
\newcommand{\FIGURE}[3]{%
	\centering
	#1
	\caption{#2}
	\ifx\relax#3\relax\else
	\par\smallskip\noindent\footnotesize #3
	\fi
}

\newcommand{\ACKNOWLEDGMENT}[1]{%
	\section*{Acknowledgments}%
	#1%
}

\newcommand{\ECRRHFirstLine}[1]{}
\newcommand{\ECLRHFirstLine}[1]{}
\newcommand{\ECRRHSecondLine}[1]{}
\newcommand{\ECLRHSecondLine}[1]{}

\newcommand{\ECSwitch}{%
	\newpage
	\setcounter{section}{0}%
	\renewcommand{\thesection}{EC.\arabic{section}}%
	\renewcommand{\theequation}{EC.\arabic{equation}}%
	\renewcommand{\thetable}{EC.\arabic{table}}%
	\renewcommand{\thefigure}{EC.\arabic{figure}}%
}

\newcommand{\ECHead}[1]{%
	\section*{#1}%
}

\newcommand{\ABSTRACT}[1]{%
	\begin{abstract}
		#1
	\end{abstract}
}

\newcommand{\KEYWORDS}[1]{%
	\par\medskip\noindent\textbf{Keywords:} #1\par
}

\renewenvironment{proof}[1]{%
	\par\noindent\textit{Proof.}\enspace
}{%
	\hfill$\square$\par\medskip
}

\newcommand{\Halmos}{} 

\newenvironment{APPENDICES}
{%
    \newpage
    \appendix
    \setcounter{section}{0}%
    \crefname{section}{Appendix}{Appendices}
    \Crefname{section}{Appendix}{Appendices}
}
{%
}
\usepackage[capitalize,noabbrev]{cleveref}
\crefname{appsec}{Appendix}{Appendices}
\Crefname{appsec}{Appendix}{Appendices}

\else

\documentclass[opre,dblanonrev]{informs4}
\RequirePackage{tgtermes}
\RequirePackage{newtxtext}
\RequirePackage{newtxmath}
\RequirePackage{bm}
\RequirePackage{endnotes}

\OneAndAHalfSpacedXI

\EquationsNumberedThrough    

\TheoremsNumberedThrough     
\ECRepeatTheorems  %

\MANUSCRIPTNO{MOOR-0001-2024.00}

\fi

\usepackage{algorithm}
\usepackage{algpseudocode}
\usepackage{tikz}

\usepackage{natbib}
 \bibpunct[, ]{(}{)}{,}{a}{}{,}%
\usepackage{paralist}
\usepackage[dvipsnames]{xcolor}
\usepackage{comment}

\usepackage[capitalize,noabbrev]{cleveref}

\usepackage{booktabs}
\usepackage{multirow}
\usepackage{multicol}

\usepackage{xspace}
\newcommand{\probabbr}{CVRPTW-TD\xspace}   
\newcommand{\Veh}{K} 
\newcommand{\VehCap}{Q}
\newcommand{\Tmax}{T_{\mathrm{max}}}
\newcommand{\TDVis}{V_D} 

\newcommand{\dmin}[1]{\delta^{\mathrm{min}}_{#1}} 
\newcommand{\dmax}[1]{\delta^{\mathrm{max}}_{#1}}
\newcommand{\dminAll}{\dmin{}}
\newcommand{\dmaxAll}{\dmax{}}

\newcommand{\AllFrag}{\mathcal{F}}

\newcommand{\esF}[2]{t_{#2}^{\mathrm{ear}}}
\newcommand{\lsF}[2]{t_{#2}^{\mathrm{lat}}}
\newcommand{\schedF}[1]{\mathcal{S}_{#1}}
\newcommand{\lenF}[1]{{|#1|}}

\newcommand{\durF}[1]{\mathrm{dur}_{#1}}
\newcommand{\demF}[1]{q_{#1}}
\newcommand{\costF}[1]{c_{#1}}

\newcommand{\FragFromTo}[2]{\AllFrag_{#1 \to #2}}
\newcommand{\FragFrom}[1]{\AllFrag_{#1 \to *}}
\newcommand{\FragTo}[1]{\AllFrag_{* \to #1}}

\newcommand{\FSECVehMin}[1]{V_{\mathrm{min}}(#1)}

\NewDocumentCommand{\FORM}{m o}{%
    \IfNoValueTF{#2}
      {\ensuremath{\mathrm{#1}}\xspace}%
      {\ensuremath{\mathrm{#1}(#2)}\xspace}%
}

\NewDocumentCommand{\FF}{o}{%
    \IfNoValueTF{#1}
      {\FORM{FF}}%
      {\FORM{FF}(#1)}%
}

\NewDocumentCommand{\FFVI}{o}{%
    \IfNoValueTF{#1}
      {\FORM{FF^+}}%
      {\FORM{FF^+}(#1)}%
}

\NewDocumentCommand{\FFVIm}{o}{%
    \IfNoValueTF{#1}
      {\FORM{FF^*}}%
      {\FORM{FF^*}(#1)}%
}

\NewDocumentCommand{\relFF}{o}{%
    \IfNoValueTF{#1}
      {\FORM{\widetilde{FF}}}%
      {\FORM{\widetilde{FF}}(#1)}%
}

\NewDocumentCommand{\relFFVI}{o}{%
    \IfNoValueTF{#1}
      {\FORM{\widetilde{FF}^+}}%
      {\FORM{\widetilde{FF}^+}(#1)}%
}

\NewDocumentCommand{\relFFVIm}{o}{%
    \IfNoValueTF{#1}
      {\FORM{\widetilde{FF}^*}}%
      {\FORM{\widetilde{FF}^*}(#1)}%
}

\NewDocumentCommand{\REL}{m}{%
    {\ensuremath{\widetilde{#1}}\xspace}
}

\NewDocumentCommand{\VAL}{o}{%
    {\ensuremath{z(#1)}\xspace}
}

\newcommand{\solalg}{\ensuremath{\mathrm{FSA}}\xspace}   

\newcommand{\CGfrag}{\AllFrag^{CG}}
\newcommand{\CGfragFromTo}[2]{\AllFrag^{CG}_{#1 \to #2}}

\newcommand{\CGtimeMIP}{t_\mathrm{guess}}

\newcommand{\ENfrag}{\AllFrag^{E}}
\newcommand{\ENfragFrom}[1]{\AllFrag^{E}_{#1 \to *}}
\newcommand{\ENfragTo}[1]{\AllFrag^{E}_{* \to #1}}

\newcommand{\maxfragenum}{F_\mathrm{max}}

\newcommand{\LBalg}{LB_\mathrm{sol}}
\newcommand{\UBguess}{UB_\mathrm{cand}}
\newcommand{\UBstep}{gap_\mathrm{step}}
\newcommand{\UBinitstep}{gap_\mathrm{init}}
\newcommand{\IncSolVal}{UB_\mathrm{sol}}
\newcommand{\IncSol}{\mathcal{F}_\mathrm{sol}}

\newcommand{\solalgNoBC}{\ensuremath{\mathrm{FSA^{NC}}}\xspace}   
\newcommand{\solalgNoBCNoCut}[1]{\ensuremath{\mathrm{FSA^{NC}-#1}}\xspace}

\newcommand{\reltempdep}{\ensuremath{\sigma}}

\newcommand{\dMaxVeh}{\gamma}
\newcommand{\dAss}[1]{\zeta_{#1}}
\newcommand{\dFlow}[1]{\eta_{#1}}

\newcommand{\dTimeMTZ}[1]{\theta_{#1}}
\newcommand{\dTimeLB}[1]{\kappa_{#1}}
\newcommand{\dTimeUB}[1]{\lambda_{#1}}

\newcommand{\dLoadMTZ}[1]{\tau_{#1}}
\newcommand{\dLoadLB}[1]{\nu_{#1}}
\newcommand{\dLoadUB}[1]{\xi_{#1}}

\newcommand{\rcArc}[2]{\tilde{c}_{#1 #2}} 
\newcommand{\rcComplFrag}[1]{\pi^{#1}}

\newcommand{\flCost}[1]{\tilde{c}^{#1}}
\newcommand{\flSet}[1]{N^{#1}}
\newcommand{\flEnd}[1]{e^{#1}}
\newcommand{\flStart}[1]{s^{#1}}
\newcommand{\flDur}[1]{\mathrm{dur}^{#1}}
\newcommand{\flLS}[1]{t_{\mathrm{lat}}^{#1}}
\newcommand{\flES}[1]{t_{\mathrm{ear}}^{#1}}
\newcommand{\flLoad}[1]{q^{#1}}
\newcommand{\flLSDMin}[1]{t_{\dminAll}^{#1}}

\newcommand{\rcBoundDom}[2]{\rho(#1, #2)}

\newcommand{\xsol}{\bar{x}}

\newcommand{\esT}[1]{t_{#1}^{\mathrm{es}}}
\newcommand{\lsT}[1]{t_{#1}^{\mathrm{ls}}}
\newcommand{\eaT}[1]{t_{#1}^{\mathrm{ea}}}
\newcommand{\ldT}[1]{t_{#1}^{\mathrm{ld}}}

\newcommand{\dohnalg}{\ensuremath{\mathrm{DOHN}}\xspace}
\newcommand{\mtzalg}{\ensuremath{\mathrm{AF}}\xspace}

\newtheorem{observation}{Observation}

\usepackage{thmtools}

\usepackage{tikz}
\usepackage{pgfplots}
\usepackage{pgfkeys}
\usepackage{pgfplotstable} 
\usepgfplotslibrary{groupplots}
\pgfplotsset{compat=newest}
\usepgfplotslibrary{external} 					

\usetikzlibrary{backgrounds}					
\usetikzlibrary{fit} 							
\pgfplotsset{major grid style={thick}}
\usetikzlibrary{arrows}
\usetikzlibrary{arrows.meta}
\usetikzlibrary{shapes.geometric}
\usetikzlibrary{plotmarks}

\usetikzlibrary{arrows,positioning,shapes.geometric,calc,decorations}
\usepackage{ifthen}
\usepackage[normalem]{ulem}

\tikzstyle{black}=[draw=black,circle,fill=white,minimum size=5pt, inner sep=1pt]
\tikzstyle{gray}=[draw=gray,circle,fill=white,minimum size=5pt, inner sep=1pt]

\usetikzlibrary{patterns}

\newcommand{\includeNodesExample}{
\node[depot] (n0) at (52,35) {0};

\node[customer_ss] (n1) at (23,35.5) {1};
\node[left=2mm of n1] {[1,8]};

\node[customer] (n2) at (40,30) {2};
\node[above=0mm of n2] {[2,6]};

\node[customer] (n3) at (71.3,25) {3};
\node[right=1mm of n3] {[2,9]};

\node[customer_pmin] (n4) at (67,46) {4};
\node[below=1mm of n4] {[1,6]};

\node[customer] (n5) at (35,38) {5};
\node[above=0mm of n5] {[1,8]};

\node[customer] (n6) at (30,50) {6};
\node[left=1mm of n6] {[2,8]};

\node[customer] (n7) at (42,22) {7};
\node[above=0.0mm of n7] {[6,7]};

\node[customer] (n8) at (84,50) {8};
\node[right=1mm of n8] {[6,9]};

\node[customer_pmax] (n9) at (80,41.5) {9}; 
\node[right=1.0mm of n9] {[2,5]};

\node[customer] (n10) at (83,30) {10};
\node[right=1mm of n10] {[5,6]};

\node[customer] (n11) at (45,50) {11};
\node[below left=0mm of n11] {[1,9]};

\node[customer] (n12) at (62, 32) {12};
\node[right=1mm of n12] {[1,6]};

\node[customer_ss] (n13) at (23,31) {13}; 
\node[left=2mm of n13] {[1,8]};

\node[customer] (n14) at (52,26) {14}; 
\node[right=1mm of n14] {[8,9]};

\node[customer_pmin] (n15) at (64,49.2) {15};
\node[left=1mm of n15] {[3,9]};

\node[customer_pmax] (n16) at (80,37) {16};
\node[right=1mm of n16] {[6,9]};

\node[draw, dotted, very thick, ellipse, fit=(n1)(n13), inner sep=1pt] {};
\node[draw, dotted, very thick, ellipse, fit=(n4)(n15), inner sep=1pt] {};
\node[draw, dotted, very thick, ellipse, fit=(n9)(n16), inner sep=1pt] {};
}

\newcommand{\includeSynNodesExample}{
\node[depot] (n0) at (52,35) {0};

\node[customer_ss] (n1) at (23,35.5) {1};
\node[left=2mm of n1] {[1,8]};

\node[customer_pmin] (n4) at (67,46) {4};
\node[below=1mm of n4] {[1,6]};

\node[customer_pmax] (n9) at (80,41.5) {9}; 
\node[right=1.0mm of n9] {[2,5]};

\node[customer_ss] (n13) at (23,31) {13}; 
\node[left=2mm of n13] {[1,8]};

\node[customer_pmin] (n15) at (64,49.2) {15};
\node[left=1mm of n15] {[3,9]};

\node[customer_pmax] (n16) at (80,37) {16};
\node[right=1mm of n16] {[6,9]};

\node[draw, dotted, very thick, ellipse, fit=(n1)(n13), inner sep=1pt] {};
\node[draw, dotted, very thick, ellipse, fit=(n4)(n15), inner sep=1pt] {};
\node[draw, dotted, very thick, ellipse, fit=(n9)(n16), inner sep=1pt] {};
}

\newcommand{\includeStyleExample}{
\tikzstyle{depot} = [rectangle, draw=black, fill=red!60, minimum size=5mm, inner sep=0pt, text centered]
\tikzstyle{customer} = [circle, draw=black, fill=blue!30, minimum size=5mm, inner sep=0pt, text centered]
\tikzstyle{customer_ss} = [circle, draw=black, fill=yellow!50, minimum size=5mm, inner sep=0pt, text centered]
\tikzstyle{customer_d} = [circle, draw=black, fill=orange!50, minimum size=5mm, inner sep=0pt, text centered]
\tikzstyle{customer_pmax} = [circle, draw=black, fill=green!50, minimum size=5mm, inner sep=0pt, text centered]
\tikzstyle{customer_pmin} = [circle, draw=black, fill=purple!50, minimum size=5mm, inner sep=0pt, text centered]

\tikzstyle{route} = [very thick, ->, >=Stealth]

\tikzstyle{fragNodeToD} = [very thick, densely dotted, ->, >=Stealth]
\tikzstyle{fragDToNode} = [very thick, loosely dashed, ->, >=Stealth]
\tikzstyle{fragNodeToNode} = [very thick, dashdotted, ->, >=Stealth]
\tikzstyle{fragDToD} = [very thick, loosely dashdotdotted, ->, >=Stealth]
}

\newcommand{\drawRouteOne}{
\draw[route, red] (n0) -- (n12);
\draw[route, red] (n12) -- (n3);
\draw[route, red] (n3) -- (n10);
\draw[route, red] (n10) -- (n16);
\draw[route, red] (n16) -- (n0);
}

\newcommand{\drawRouteOneFragForm}{
\draw[fragDToNode, red] (n0) to[bend right=8] (n16);
\draw[fragNodeToNode, red] (n16) to[bend right=8] (n0);
}

\newcommand{\drawRouteTwo}{
\draw[route, blue] (n0) -- (n5);
\draw[route, blue] (n5) -- (n1);
\draw[route, blue] (n1) -- (n6);
\draw[route, blue] (n6) -- (n11);
\draw[route, blue] (n11) -- (n0);
}

\newcommand{\drawRouteTwoFragForm}{
\draw[fragDToNode, blue] (n0) to[bend left=2] (n1);
\draw[fragNodeToD, blue] (n1) to[bend left=10] (n0);
}

\newcommand{\drawRouteThree}{
\draw[route, OliveGreen] (n0) -- (n2);
\draw[route, OliveGreen] (n2) -- (n13);
\draw[route, OliveGreen] (n13) -- (n7);
\draw[route, OliveGreen] (n7) -- (n14); 
\draw[route, OliveGreen] (n14) -- (n0);
}

\newcommand{\drawRouteThreeFragForm}{
\draw[fragDToNode, OliveGreen] (n0) to[bend right=2] (n13);
\draw[fragNodeToD, OliveGreen] (n13) to[bend right=10] (n0);
}

\newcommand{\drawRouteFour}{
\draw[route, brown] (n0) -- (n4);
\draw[route, brown] (n4) -- (n9);
\draw[route, brown] (n9) -- (n8);
\draw[route, brown] (n8) -- (n15);
\draw[route, brown] (n15) -- (n0);
}

\newcommand{\drawRouteFourFragForm}{
\draw[fragDToNode, brown] (n0) to[bend right=3] (n4);
\draw[fragNodeToNode, brown] (n4) to[bend left=5] (n9);
\draw[fragNodeToNode, brown] (n9) to[bend right=40, looseness=1] (n15);
\draw[fragNodeToD, brown] (n15) to[bend right=3] (n0);
}

\ifTR

\title{An exact algorithm for vehicle routing problems with temporal dependency constraints}
\author{Loek van Montfort, Markus Leitner, and Rosario Paradiso}
\affil{Department of Operations Analytics, Vrije Universiteit Amsterdam, The Netherlands.
\texttt{\{l.g.a.j.van.montfort2|m.leitner|r.paradiso\}@vu.nl}}

\fi

\begin{document}



\ifTR

\maketitle

\else
\RUNAUTHOR{Van Montfort, Leitner, and Paradiso}
\RUNTITLE{An exact algorithm for vehicle routing problems with temporal dependencies}
\TITLE{An exact algorithm for vehicle routing problems with temporal dependency constraints}
\ARTICLEAUTHORS{%
\AUTHOR{Loek van Montfort, Markus Leitner, Rosario Paradiso}
\AFF{Department of Operations Analytics,
Vrije Universiteit Amsterdam,
\EMAIL{l.g.a.j.van.montfort2@vu.nl}, \EMAIL{m.leitner@vu.nl}, \EMAIL{r.paradiso@vu.nl}}
} 

\fi 

\ABSTRACT{%
	Temporal dependencies between customer visits, such as synchronization constraints, pose a fundamental challenge in vehicle routing.
	These dependencies, which arise in applications such as home healthcare routing, aircraft scheduling, and technician routing, introduce inter-route constraints that make the resulting problems significantly harder to solve.
	We present an exact solution method for vehicle routing problems with temporal dependencies capable of handling all types of temporal dependencies studied in the literature, unlike most existing approaches that target specific subclasses.
	Our approach is based on a fragment-based formulation in which routes are represented as sequences of a new type of fragment, designed to handle temporal dependency constraints.
	This formulation is solved via a price-cut-and-enumerate algorithm that computes a lower bound using alternating column-and-row generation, obtains an initial upper bound, and iteratively refines both bounds through fragment enumeration and branch-and-cut, supported by several new classes of valid inequalities.
	Computational experiments show that our method significantly outperforms state-of-the-art benchmark methods and is able to solve previously intractable instances while covering a wider range of temporal dependencies.
}%




\KEYWORDS{Vehicle Routing Problem, Temporal Dependencies, Synchronization, Price-Cut-and-Enumerate, Exact methods} 

\ifTR
\else
\maketitle
\fi




\section{Introduction}\label{sec:Intro}
Vehicle Routing Problems (VRPs), which address the optimal design of vehicle routes for a given set of tasks, have been intensively studied since their introduction by \citet{Dantzig59}. Important variants include the capacitated VRP or the VRP with time windows \citep{Toth2014}.
A common assumption in most problem variants is that whether and when a task can be performed is independent of other vehicles' routes and schedules. While valid in many applications, this assumption does not hold for VRPs requiring some form of synchronization between tasks or vehicles, which have recently attracted growing interest \citep{SOARES2024817,Drexl2012}. The survey by \citet{SOARES2024817} distinguishes between operation synchronization, induced by temporal dependencies between tasks, and movement synchronization, in which the movement of one vehicle requires a corresponding movement of another.
This article focuses on operation synchronization arising from \textit{temporal dependencies} between pairs of tasks, which may be performed by the same or different vehicles and can therefore appear as intra-route and inter-route constraints. These dependencies are classified into the following types:
\begin{inparaenum}[(i)]
	\item \textit{synchronization} requires two tasks to start simultaneously;  
	\item \textit{precedence} imposes a predetermined starting order of two tasks and can be generalized by specifying 
	\item a \textit{minimum or maximum difference} between their start times;
	\item \textit{overlap} requires that the execution intervals of two tasks intersect, while 
	\item \textit{non-overlap} prohibits their simultaneous execution. 
\end{inparaenum}

The relevance of temporal dependencies in routing and scheduling problems has been shown for a broad range of applications. In home healthcare (HHC), synchronization arises when patients require multiple caregivers simultaneously \citep{BREDSTROM200819}, while precedence or general temporal constraints apply when medication is needed a certain time before or after dinner \citep{Mankowska2014}. Non-overlap constraints enable the integration of different home care structures \citep{Frifita2020} or the splitting of multi-task visits across different caregivers \citep{VANMONTFORT2025104235}. 
Further applications include the delivery and 
installation of large items \citep{HOJABRI201887}, collaboration between teams 
at customer locations \citep{HA2020105085}, airport ground handling when jobs require multiple workers \citep{DOHN20091145}, air fleet planning to ensure 
similar starting times on different days 
\citep{IOACHIM199975}, therapist scheduling when patients require multiple therapists 
\citep{Kling2025}, electric vehicle routing to prevent 
queuing at charging stations
\citep{Froger2022}, and
last-mile delivery by 
hierarchically ordered vehicles such as vans or bikes in combination with self-driving robots \citep{CHEN20211164,Zhao2024}.
Temporal dependencies can also increase efficiency by enabling individual routes and schedules rather than fixing predetermined teams that must travel together, see, e.g., \cite{DEAGUIAR2023101503,HANAFI2020515}.

Although routing problems with temporal dependencies have received considerable attention, most exact solution methods focus on a single type, such as synchronization \citep{DOHN20091145,Luo2016, Doulabi2020, Qiu2022} or maximum time differences \citep{Bianchessi2019}.
Only a few can handle multiple types, including synchronization, overlap, and minimum/maximum time differences \citep{Dohn2011, RASMUSSEN2012598, TILK2019549}, and none is able to handle non-overlap constraints.


A key reason for the scarcity of exact solution methods is that state-of-the-art algorithms for classical VRPs, typically based on route-based formulations, cannot be extended easily 
to VRPs with temporal dependencies. Temporal restrictions complicate or invalidate dominance rules that enable the elimination of suboptimal routes, which are central to the effectiveness of column-generation-based VRP algorithms. 
As noted by \cite{GSCHWIND201560} and \cite{TILK2019549}, this occurs because temporal dependencies introduce a trade-off for the starting time of tasks within a route. Instead of starting tasks as early as possible, it may be beneficial to delay a task to satisfy temporal dependencies within or between routes. 

Two main approaches have been proposed to address this challenge: 
\begin{inparaenum}[(i)]
\item relax temporal dependencies in the master and subproblem of branch-and-price (BP) approaches and enforce them via time window branching \citep{Bredstrom2007, Dohn2011}, or
\item incorporate temporal dependencies in the master problem and develop dedicated algorithms and dominance rules for the resulting, more challenging pricing subproblem, which typically involves linear node costs \citep{IOACHIM199975,TILK2019549}.
\end{inparaenum}
These two strategies represent contrasting approaches to deal with the complexity imposed by temporal dependencies. Time window branching based methods benefit from a well-studied pricing subproblem for which efficient methods exist, but yield weaker linear relaxation bounds since they may generate infeasible routes that ignore information about temporal dependencies. Incorporating temporal dependencies in the master problem produces tighter formulations, but comes at the cost of a significantly harder subproblem due to linear node costs \citep{IOACHIM199975} whose difficulty is further increased when temporal dependencies can arise between tasks of a single route \citep{GSCHWIND201560}.

In this paper, we study a generic VRP with temporal dependencies (VRPTD) and propose an exact solution algorithm applicable to a broad class of temporal dependencies, including non-overlapping constraints not addressed by any existing exact method. 
Our approach is based on a novel, fragment-based formulation designed to achieve a computationally beneficial trade-off between formulation strength and pricing problem complexity, instead of prioritizing one at the expense of the other as in existing methods. Specifically, fragments are constructed such that no task included in such a route part can have conflicting scheduling incentives due to inter- or intra-route constraints. While representing routes by sequences of partial routes (fragments) has been used in other VRPs \citep{Paradiso2020,Rist2021,Yang2023}, we are, to the best of our knowledge, the first to introduce and define fragments tailored to routing problems with temporal dependencies between tasks. 
The main contributions of this paper are:
\begin{itemize}
    \item We introduce 
    a generic VRPTD that, as opposed to existing formulations, includes non-overlap constraints between visits that may belong to
    the same or different routes. 
    \item We demonstrate the benefits of a fragment-based route representation for routing problems with temporal dependencies and introduce a novel fragment-based formulation for the VRPTD.
    \item We propose four classes of strengthening inequalities for the new formulation.
    \item We develop an exact price-cut-and-enumerate 
    algorithm that first uses row-and-column generation to obtain a lower bound and a mixed-integer linear programming (MILP)-based heuristic for an upper bound. Subsequently, it refines these bounds through fragment enumeration, fragment reduction, and branch-and-cut in an iterative process. 
    \item We perform an extensive computational study demonstrating that the proposed framework handles a wider range of temporal dependencies than existing methods and outperforms two benchmark approaches. The results also confirm the effectiveness of the proposed strengthening inequalities. 
\end{itemize}

The remainder of this paper is organized as follows. \cref{sec:litrev} reviews exact solution methods for VRPs with temporal dependencies. \cref{sec:probdef} defines the problem and presents the fragment-based formulation. \cref{sec:valineq} introduces the strengthening inequalities, and \cref{sec:solappr} details the price-cut-and-enumerate algorithm. \cref{sec:compres} reports the computational results, and \cref{sec:Conclusion} summarizes the findings and outlines directions for future research. Proofs of theoretical results are given in the appendix which also provides tables summarizing notation, details of implementation aspects, and additional results.

\section{Literature review}\label{sec:litrev}

We review exact solution methods for VRPs with operation synchronization induced by temporal dependencies. Broader overviews that include movement synchronization and heuristic methods can be found in the surveys by \cite{SOARES2024817} and \cite{Drexl2012}. 
We first discuss BP algorithms that ensure temporal dependencies in the master problem or via time window branching, before briefly reviewing related exact methods not based on column generation.

\paragraph{Branch-and-price methods ensuring temporal dependencies in the master problem}

\cite{IOACHIM199975} were the first to incorporate temporal dependency constraints explicitly in the master problem of a BP approach. They study an aircraft fleet routing and scheduling problem in which identical flights on different days must depart at the same time. 
Their set-partitioning formulation models synchronization via the starting times of temporal dependency tasks within each route. 
The resulting subproblem is an elementary shortest path problem with resource constraints and linear time node costs, which is already strongly NP-hard without the latter costs \citep{Spliet23}. They solve this subproblem using the dynamic programming algorithm of \cite{Ioachim1998}, which uses a piecewise linear function for partial paths to handle the linear node costs.

\cite{TILK2019549} study a variant of a VRPTW with soft demand, minimizing a combination of dispatching costs, travel costs, and collected profits. Temporal dependencies are specified by minimum and maximum starting time differences between task pairs, an approach that can also accommodate synchronization and overlap. The interdependence of three resources (cost, load, and time) leads to two trade-offs for (partial) routes (cost vs.\ load and cost vs.\ time). As this 
 complicates the pricing subproblem and only allows for weak dominance rules between partial routes, they do not use dynamic programming for solving the pricing suproblem of their route-based formulation. 
Instead, they propose a nested decomposition approach in which the subproblem is solved by branch-price-and-cut (BPC).

\cite{Qiu2022} study a VRPTW arising in HHC with caregiver qualifications and task pairs
that must be performed within a maximum time. Each caregiver can perform at most one task with temporal dependencies, while at most two such tasks can exist per patient. They propose a BPC approach based on a set-partitioning formulation that enforces synchronization via inequalities in the master problem and develop a dedicated labeling algorithm for the pricing subproblem.

\paragraph{Branch-and-price methods ensuring temporal dependencies via time window branching}

\cite{Bredstrom2007} were the first to propose a BP approach for a VRPTW with synchronization in which temporal dependencies are relaxed in both the master and subproblem and enforced via time window branching. While relaxing temporal dependencies weakens the linear programming relaxation, it enables solving the pricing problem by (slightly adjusted versions of) standard labeling algorithms successfully used for VRPs without temporal dependencies.

Time window branching in a BP approach has also been used by \cite{DOHN20091145} for a manpower allocation problem in which tasks require multiple qualified teams simultaneously and, more recently, by \cite{Kling2025} for the planning of physical therapy within a hospital, where a subset of visits requires two therapists.
\cite{Luo2016} study a manpower allocation problem with tasks that require specific numbers of workers from given categories. 
Their BPC approach uses a combination of time window branching and master-level constraints (infeasible path and weak clique inequalities) to ensure synchronization. 

\cite{Dohn2011, RASMUSSEN2012598} and \cite{LIN2021102177} propose BP(C) algorithms based on time window branching that can handle multiple types of temporal dependencies. 
Similar to \cite{TILK2019549}, they use a generic framework to specify these 
dependencies and demonstrate the applicability of time window branching to synchronization, minimum and/or maximum time differences, precedence, and overlap. To our knowledge, the work of \cite{Dohn2011} is the only one comparing a BP algorithm with time window branching to a BPC ensuring temporal dependencies by dynamically adding inequalities to the master problem. Both approaches show a comparable performance in their computational study.

\paragraph{Other exact approaches}

In the following, we discuss additional decomposition-based methods, thereby excluding methods statically incorporating temporal dependencies in the master problem. Thus, we will not review approaches based on solving (compact) MILP formulations using a general-purpose solver \citep[see, e.g.,][]{Algendi2025,Kergosien2009,Lopez2018,VANMONTFORT2025104235}
and cutting-plane based methods ensuring temporal dependencies via variables and constraints statically included in the master problem whose cuts focus on other aspects such as the routing aspect to strengthen the dual bounds, see, e.g., \cite{Cappanera2020,Chabot02112017}.
%
\cite{Doulabi2020} study a VRPTW with synchronization and stochastic travel and service times in which customers require the simultaneous service of multiple vehicles. 
They propose a two-stage MILP which is solved via a branch-and-cut implementation of the L-shaped algorithm, dynamically adding subtour elimination, service time capacity, and no-overlap constraints.
%
\cite{Mankowska2016} propose a Benders decomposition approach for HHC routing and scheduling with synchronization and minimum/maximum time differences between visits, where routing decisions are made in the master problem and starting times are determined in the subproblem. 
\cite{HANAFI2020515} study a team orienteering problem in which customer can require multiple visits with predefined precedence relations. They propose a branch-and-cut algorithm that dynamically separates inequalities related to precedence restrictions in addition to connectivity constraints. 

\smallskip

Overall, the two most frequently used methods for VRPs with 
temporal dependencies are complementary BP approaches based on route formulations that either incorporate the temporal restriction in the master problem \citep[see, e.g.,][]{IOACHIM199975,TILK2019549} or relax them in the master and subproblem and use time window branching \citep[see, e.g.,][]{DOHN20091145,RASMUSSEN2012598,LIN2021102177}. The former maintains strong dual bounds at the cost of a computationally more challenging subproblem; the latter sacrifices the quality of the dual bounds for a simpler subproblem. 
Notably, \cite{GSCHWIND201560} show that incorporating temporal dependencies arising only between customers in the same route in the pricing problem strengthens the formulation at the cost of added complexity, but yields a better performing algorithm.

The approach proposed in this article is able to handle general temporal dependencies including non-overlapping constraints which are not considered by any exact method. Furthermore, it achieves a favorable trade-off between formulation strength and pricing subproblem complexity.

\section{Problem definition and formulation}\label{sec:probdef}
The Capacitated VRP with Time Windows and Temporal Dependencies (\probabbr) considers a set of homogeneous vehicles $\Veh$ with capacity $\VehCap$ that are initially located at the depot $0$ and must perform all tasks in the set $V = \{1,\dots,n\}$ within planning horizon $[0,\Tmax]$. Parameter $t_{uv}$ indicates the time required for a vehicle to travel from $u\in V\cup \{0\}$ to $v\in V\cup \{0\}$, $u\ne v$ and $c_{uv}$ denotes the corresponding travel costs. 
Each task $v \in V$ is associated with its duration $d_v \; (0 \leq d_v \leq \Tmax)$, demand $q_v (0 \leq q_v \leq Q)$, and time window $[\alpha_v, \beta_v]$ during which the task can start $(0 \leq \alpha_v  \leq \beta_v \leq \Tmax)$.

Set $D\subseteq V^2$ contains all pairs of tasks with temporal dependencies between them and the set of all tasks with temporal dependencies is denoted as $\TDVis = \{v \in V: \exists \{u,v\} \in D\}$. Four non-negative parameters $\dmin{uv}$, $\dmax{uv}$, $\dmin{vu}$, and $\dmax{vu}$ are associated with each pair $\{u,v\}\in D$. Thereby, $\dmin{uv}$ and $\dmax{uv}$ specify the minimum and maximum allowed difference between the start of task $u$ and task $v$ when $u$ starts no later than $v$. Similarly, $\dmin{vu}$ and $\dmax{vu}$ correspond to the minimum and maximum allowed differences between the same tasks when $v$ starts before or simultaneously with $u$. \cref{tab:temp_dep_overview} summarizes parameter values for common temporal dependency types.
Using four parameters per pair accommodates a wide variety of complex temporal dependencies beyond the fundamental ones in \cref{tab:temp_dep_overview}.
These are realized by combining parameter values, e.g., a non-overlapping relation combined with a maximum starting time difference of $\Delta^\mathrm{max}$ can be modeled via $\dmin{uv}=d_u$, $\dmax{uv}=\Delta_\mathrm{max}$, $\dmin{vu}=d_v$, and $\dmax{vu}=\Delta_\mathrm{max}$.

\begin{table}
\TABLE
{Temporal dependencies and corresponding parameter values.\label{tab:temp_dep_overview}}
{\begin{tabular}{cccccc}
\toprule
Temporal dependency & $\dmin{uv}$ & $\dmax{uv}$ & $\dmin{vu}$ & $\dmax{vu}$ \\
\midrule
Synchronization & 0 & 0 & 0 & 0 \\
Minimum difference $(\Delta_{\mathrm{min}})$ & $\Delta_{\mathrm{min}}$ & $\Tmax$ & $\Delta_{\mathrm{min}}$ & $\Tmax$ \\
Maximum difference $(\Delta_{\mathrm{max}})$ & 
0 & $\Delta_{\mathrm{max}}$ & 0 & $\Delta_{\mathrm{max}}$ \\
Minimum and maximum difference &  
$\Delta_{\mathrm{min}}$ & $\Delta_{\mathrm{max}}$ & $\Delta_{\mathrm{min}}$ & $\Delta_{\mathrm{max}}$ \\
Overlap & 0 & $d_u$ & 0 & $d_v$ \\
Non-overlap & $d_u$ & $\Tmax$ & $d_v$ & $\Tmax$ \\
Precedence ($u$ before $v$, i.e., $u \preceq v$) & 0 & $\Tmax$ & $\Tmax$ & $\Tmax$ \\ 
\bottomrule
\end{tabular}}{}
\end{table}

The objective of the \probabbr is to design routes and schedules for the available vehicles that minimize the total travel cost while ensuring that all tasks are performed and all time windows, vehicle capacities, and temporal dependencies are satisfied.

\medskip

\cref{fig:example:inst:route} illustrates a solution to a \probabbr instance in which all task durations and travel times are equal to one. Time windows are given next to each task. 
Although exact starting times are omitted, we discuss bounds different from the time windows imposed by temporal dependencies. 
In this instance, tasks 1 and 13 must be performed simultaneously, task 15 cannot start more than six time units later than task 4, and task 9 has to start at least 4 time units after task 16. Each vehicle has sufficient capacity to perform all tasks. 
The synchronization of tasks $1$ and $13$ implies that task $1$ cannot start earlier than time $t=4$ although the vehicle performing route $(0,5,1,6,11,0)$ could arrive at task $1$ already at $t=3$. This illustrates the dependencies between the routes of different vehicles. 
Route $(0,4,9,8,15,0)$ illustrates that temporal dependencies can also require postponing the start of a task to satisfy a maximum time difference to a subsequent task within the same route. Although the vehicle could arrive at task $4$ at time $t=1$, it cannot start earlier than $t=2$, since the maximum time difference to task $15$ is six and task $15$ cannot start before time $8$ due to the time window of task $8$.
As a result, task $9$, which follows task $4$ cannot start before $t=4$, and due to the minimum difference of four time units between the start of tasks $9$ and $16$, the earliest feasible starting time of task $16$ within route $(0,12,3,10,16,0)$ increases from $7$ to $8$. 
This example highlights the competing scheduling incentives between consecutive tasks $4$ and $9$ within route $(0,4,9,8,15,0)$. Delaying the start of task $4$ increases the scheduling flexibility of task $15$ (due to the maximum time difference), whereas starting task $4$ earlier improves the scheduling flexibility of $16$ (due to the minimum time difference). 

 \begin{figure}
     \FIGURE
 {\begin{tikzpicture}[scale=0.11, every node/.style={scale=0.9, font=\footnotesize}]
			\includeStyleExample
			\includeNodesExample
			\drawRouteOne
			\drawRouteTwo
			\drawRouteThree
			\drawRouteFour
    \end{tikzpicture}}
{Visualization of a \probabbr instance and corresponding feasible solution.  \label{fig:example:inst:route}}
{}
\end{figure}

\paragraph{Assumptions and notation}
We assume that travel times and travel costs satisfy the triangle inequality.
To simplify notation, we assign time window $[\alpha_0,\beta_0]=[0,\Tmax]$, duration $d_0=0$, and quantity $q_0=0$ to the depot $0$ which may thus be referred to as task where convenient. We also define $N = V \cup \{0\}$ as the set of tasks including the depot.

\subsection{Fragment-based formulation}\label{ss:prob:def:frag:form}
In this section, we introduce a MILP for the \probabbr that is based on representing vehicle routes by sequences of so-called fragments. Fragments are ordered sequences of tasks that can be performed by a single vehicle and may contain tasks with temporal dependencies only at their start and end. Fragments start with performing their first task and end when arriving at their last task, see \cref{def:frag}. As discussed below, in contrast to routes there are no competing scheduling incentives for tasks performed within a fragment, i.e., delaying the start of a task contained within a fragment is never advantageous. 
As a result, a key feature of the proposed fragment-based formulation is its balance between the strength of its linear relaxation and the complexity of the pricing subproblem arising in a column-generation-based solution algorithm. 
Other work exploiting a compromise between an arc-based and route-based formulation includes, e.g., \cite{Dollevoet2025} for a capacitated VRP, \cite{Alyasiry2019, Sippel2024, SIPPEL2025107123, FORBES2026} for (variants of) the pickup-and-delivery problem, \cite{Rist2021, RIST2022105649} for (variants of) the dial-a-ride problem, \cite{Paradiso2020, Yang2023} for the multi-trip VRPTW, and \cite{DallOllio2023} for airport ground handling.
Fragment-based route representations are also used for the electric autonomous dial-a-ride problem to compute excess user ride time \citep{SU20231091} and as input for a labeling algorithm \citep{SU2024103011}.

\begin{definition}[Fragment]\label{def:frag}
A fragment $F=(v_1,\dots,v_\lenF{F})$ is an ordered sequence of tasks that 
\begin{inparaenum}[(i)]
    \item starts with the performance of task $v_1\in \TDVis\cup \{0\}$;
    \item ends when arriving at the location of task $v_\lenF{F}\in \TDVis\cup \{0\}$;
    \item does not contain any further task with temporal dependencies (i.e., $\{v_2, \dots, v_{\lenF{F}-1}\}\subseteq V\setminus \TDVis$); 
    \item contains each task different from the depot at most once; and
    \item can be performed by a single vehicle.
\end{inparaenum}
The last condition states that the total demand $\sum_{i=1}^{\lenF{F}} d_{v_{i}}$ of all tasks of fragment $F$ may not exceed the vehicle capacity $Q$ and that fragment $F$ admits a feasible schedule, i.e., there exist starting times $b_{v_i}\in [\alpha_{v_i},\beta_{v_i}]$ for each $i\in \{1, \dots, \lenF{F}\}$ such that $b_{v_i} + d_{v_i} + t_{v_i v_{i+1}} \le b_{v_{i+1}}$ holds for each $i=1,\dots, \lenF{F}-1$ and $b_{v_{\ell}} \in [b_{v_{1}} + \dmin{v_{1} v_{\lenF{F}}}, b_{v_{1}} + \dmax{v_{1} v_{\lenF{F}}}]$ if $\{v_{1}, v_{\lenF{F}}\} \in D$.
\end{definition}

\cref{def:frag} implies that fragments can be classified into the following four types according to their start and end tasks: 
\begin{inparaenum}[(i)]
    \item from the depot to a task with temporal dependencies; 
    \item from a task with temporal dependencies to another task with temporal dependencies;
    \item from a task with temporal dependencies to the depot;
    \item from the depot to the depot.
\end{inparaenum}
Each feasible vehicle route and, therefore, also each solution to the \probabbr has a unique representation as a set of fragments.
\cref{fig:example:inst:frag} visualizes the representation of the solution given in \cref{fig:example:inst:route} by its fragments. The combination of fragments $(0,5,\textbf{1})$ and $(1,6,11,0)$ forms route $(0,5,1,6,11,0)$, while the fragment sequence $(0,2, \textbf{13})$ and $(13,7,14,0)$ represents route $(0,2,13,7,14,0)$. Route $(0,12,3,10,16,0)$ corresponds to fragments $(0,12,3,10,\textbf{16})$ and $(16, 0)$, while the fragments $(0,\textbf{4})$, $(4,\textbf{9})$, $(9,8,\textbf{15})$, and $(15,0)$ form route $(0,4,9,8,15,0)$.

\begin{figure}
	\FIGURE
	{\begin{tikzpicture}[scale=0.11, every node/.style={scale=0.9, font=\footnotesize}]
			\includeStyleExample
			\includeSynNodesExample
			\drawRouteOneFragForm
			\drawRouteTwoFragForm
			\drawRouteThreeFragForm
			\drawRouteFourFragForm
	\end{tikzpicture}}
	{Visualization of the solution of \cref{fig:example:inst:route} by fragments.    \label{fig:example:inst:frag}}
	{}
\end{figure}

Each fragment $F=(v_1, \dots, v_\lenF{F})$ is associated with its total travel cost $c_F=\sum_{i=1}^{\lenF{F}-1} c_{v_i v_{i+1}}$ and its total 
demand $q_F=\sum_{i=1}^{\lenF{F}-1} q_i$ of all tasks served by $F$. 
Fragments are linked to feasible starting times of their tasks through the concept of (compact) schedules, formally introduced in \cref{def:schedule}, which is crucial to understand the advantages of our fragment-based approach and to introduce further concepts such as the latest starting time $\lsF{v_1}{F}$, earliest completion time $\esF{v_\ell}{F}$, and minimum duration $\durF{F}$ of a fragment $F$. 

\begin{definition}[(Compact) Schedule]\label{def:schedule}
	A \emph{schedule} $(b_1, \dots, b_\lenF{F})$ of a fragment $F=(v_1, \dots, v_\lenF{F})$ is an ordered set of starting times for all tasks $v_1, \dots, v_{\lenF{F}-1}$ performed in fragment $F$ together with a lower bound $b_\lenF{F}$ for the starting time of task $v_{\lenF{F}}$ according to which fragment $F$ can be performed by a vehicle, i.e., such that $b_i\in [\alpha_{v_i},\beta_{v_i}]$ for $i\in \{1, \dots, \lenF{F}\}$, $b_i + d_{v_{i}} + t_{v_{i} v_{i+1}} \le b_{i+1}$ for $i\in \{1, \dots, \lenF{F}-1\}$, and $b_\lenF{F}\in [b_1+\dmin{v_1 v_\lenF{F}}, b_1+\dmax{v_1 v_\lenF{F}}] \mbox{ if } \{v_1, v_\lenF{F}\}\in \TDVis$. Each schedule $(b_1, \dots, b_\lenF{F})$ has a \emph{duration} equal to $b_{\lenF{F}}- b_{1}$. 
	A schedule $(b_1, \dots, b_\lenF{F})$ 
    for fragment $F$ is a \emph{compact schedule} when its duration is minimal among all schedules for $F$ starting at $b_1$ and in which all intermediate tasks $v_i$, $i=2, \dots, \lenF{F}$, are started as early as possible, i.e., $b_i=\max \{ b_{i-1} + d_{v_{i-1}} + t_{v_{i-1} v_i}, \alpha_{v_i}\}$ holds for all $i=2, \dots, \lenF{F}-1$, and 
	$b_{v_\lenF{F}}=\max \{ b_{\lenF{F}-1} + d_{v_{\lenF{F}-1}} + t_{v_{\lenF{F}-1} v_\lenF{F}}, \alpha_{v_\lenF{F}},b_1+\dmin{v_1,v_\lenF{F}}\}$.
    The \emph{set of all compact schedules} of 
    $F$ is denoted as $\schedF{F}$.
\end{definition}

One key advantage of using fragments instead of full routes is that they allow decoupling the determination of the exact starting time of tasks with temporal dependencies and the identification of feasible sequences of tasks without temporal dependencies. 
Since each fragment $F=(v_1, \dots, v_{\lenF{F}})$ performs at most one task with temporal dependency, there is no incentive to introduce additional waiting time between or before the intermediate tasks $v_2, \dots, v_{\lenF{F}-1}$ that do not have temporal dependencies. Indeed, it can never be beneficial to delay intermediate tasks within a fragment as this can only increase the earliest completion time of the fragment which is a lower bound for the starting time of $v_{\lenF{F}}$. As stated in \cref{obs:compact-schedules}, it therefore suffices to consider compact schedules with maximum scheduling flexibility for tasks with temporal dependencies. The resulting absence of conflicting scheduling incentives within fragments drastically simplifies the comparison of (partial) fragments and is, therefore, crucial for the efficiency of our exact price-cut-and-enumerate solution method, see \cref{ss:solappr:init:lb} and \cref{ss:solappr:frag:enum}.

\begin{observation}\label{obs:compact-schedules}
	For each instance of the \probabbr there exists an optimal solution in which each fragment is performed according to a compact schedule.
\end{observation}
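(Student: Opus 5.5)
Take any optimal solution with routes and feasible start times $b_v$ for all tasks. By the uniqueness of the fragment representation noted after \cref{def:frag}, each route splits into fragments $F=(v_1,\dots,v_{\lenF{F}})$ whose interior tasks lie in $V\setminus\TDVis$. The plan is to change only the start times of interior tasks and then read off a compact schedule for each fragment. The routing, and hence the objective value and all capacity constraints, stays the same. The start times of tasks in $\TDVis\cup\{0\}$ also stay fixed, so every temporal dependency in $D$ remains satisfied, whether it links tasks of different routes or of the same route.

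\textbf{Step 1 (re-timing interior tasks).} Fix a fragment $F$ of the optimal solution and keep $b_1:=b_{v_1}$. Define recursively $b'_1=b_1$ and $b'_i=\max\{b'_{i-1}+d_{v_{i-1}}+t_{v_{i-1}v_i},\alpha_{v_i}\}$ for $i=2,\dots,\lenF{F}-1$. By induction, $b'_i\le b_{v_i}$ for all these $i$. The base case is trivial. For the inductive step, $b_{v_i}\ge b_{v_{i-1}}+d_{v_{i-1}}+t_{v_{i-1}v_i}\ge b'_{i-1}+d_{v_{i-1}}+t_{v_{i-1}v_i}$ and $b_{v_i}\ge\alpha_{v_i}$. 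Consequently $\alpha_{v_i}\le b'_i\le b_{v_i}\le\beta_{v_i}$, so all time windows still hold. The precedence inequalities inside the fragment hold by construction. The inequality to the last task also holds, because $b'_{\lenF{F}-1}+d+t\le b_{v_{\lenF{F}-1}}+d+t\le b_{v_{\lenF{F}}}$. The actual start times of $v_{\lenF{F}}$ and of all other fragments are untouched, so the modified solution is still feasible and still optimal.

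\textbf{Step 2 (the compact end value).} Next, set $b'_{\lenF{F}}=\max\{b'_{\lenF{F}-1}+d_{v_{\lenF{F}-1}}+t_{v_{\lenF{F}-1}v_{\lenF{F}}},\alpha_{v_{\lenF{F}}},b_1+\dmin{v_1v_{\lenF{F}}}\}$, dropping the last term if $\{v_1,v_{\lenF{F}}\}\notin D$. Since $b_{v_{\lenF{F}}}$ dominates all three terms, we have $b'_{\lenF{F}}\le b_{v_{\lenF{F}}}\le\beta_{v_{\lenF{F}}}$. If $\{v_1,v_{\lenF{F}}\}\in D$, then $b'_{\lenF{F}}\le b_{v_{\lenF{F}}}\le b_1+\dmax{v_1v_{\lenF{F}}}$, so $(b'_1,\dots,b'_{\lenF{F}})$ is a schedule of $F$ in the sense of \cref{def:schedule}. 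It is a lower bound on the actual start $b_{v_{\lenF{F}}}$, as that definition requires.

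\textbf{Step 3 (minimality of duration).} This is the only mildly delicate point, and I expect it to be the main obstacle: we must show that this schedule has minimal duration among all schedules of $F$ starting at $b_1$. Apply the same induction to an arbitrary schedule $(\hat b_1=b_1,\hat b_2,\dots)$. It gives $\hat b_i\ge b'_i$ for the interior tasks, and then $\hat b_{\lenF{F}}$ is at least each of the three terms defining $b'_{\lenF{F}}$. Hence $\hat b_{\lenF{F}}-b_1\ge b'_{\lenF{F}}-b_1$. Therefore $(b'_i)$ belongs to $\schedF{F}$.

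Applying Steps 1 to 3 to every fragment of every route yields an optimal solution in which each fragment is performed according to a compact schedule.
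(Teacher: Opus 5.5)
Your proof is correct and follows the paper's own justification. The paper gives no formal proof; it argues in the preceding paragraph that delaying intermediate tasks of a fragment is never beneficial, since this can only increase the earliest completion time, which is a lower bound on the start of $v_{\lenF{F}}$. Your argument makes that rigorous: you keep the start times of tasks in $\TDVis\cup\{0\}$ fixed, left-shift the interior tasks, and check in Step 3 that the resulting schedule has minimal duration, which fills in the details the paper leaves implicit.
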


There is exactly one compact schedule for each feasible starting time of a fragment, i.e., of its first task. Thus, the starting times of all intermediate tasks of a fragment are implied by its starting time when fragments are performed according to a compact schedule (which we can assume w.l.o.g.).
The latest feasible 
starting time of fragment $F=(v_1,\dots,v_{\lenF{F}})$ is equal to $\lsF{v_1}{F}=\max \{b_1\mid (b_1, \dots, b_\lenF{F})\in \schedF{F}\}$, its earliest completion time is given by $\esF{v_\lenF{F}}{F}=\min \{b_\lenF{F} \mid (b_1, \dots, b_\lenF{F})\in \schedF{F}\}$, and its minimum duration can be stated as $\durF{F}=\min\{b_\lenF{F}-b_1 \mid (b_1, \dots, b_\lenF{F})\in \schedF{F}\}$. 
As shown in \cref{prop:frag:derive:sched:cond} these values can be derived in linear time. This follows from a generalization of the results of \cite{Desaulniers2000} and the labeling algorithms specified in \cite{Irnich2008, Tilk2017, Paradiso2020} to our setting where temporal dependencies can only occur between the start and end task of a path. 

\begin{restatable}{proposition}{fragderiveschedcond}\label{prop:frag:derive:sched:cond}
	The latest starting time $\lsF{v_1}{F}$, earliest completion time $\esF{v_{\lenF{F}}}{F}$, and duration $\durF{F}$ of a fragment $F=(v_1, \dots, v_{\lenF{F}})$ can be determined in $\mathcal{O}(\lenF{F})$ by solving the dynamic programming recursions
	\begin{subequations}\label{eq:prop:frag:derive:sched:cond}
		\begin{align}
		\esF{v_i}{F[i]} & = \max\{\esF{v_{i-1}}{F[i-1]} + d_{v_{i-1}} + t_{v_{i-1} v_{i}}, \alpha_{v_{i}} \} \\
        \lsF{v_{1}}{F[i]} & =  \min\{ \lsF{v_{1}}{F[i-1]}, \beta_{v_i} - t_{v_{i-1} v_{i}} - d_{v_{i-1}} - \durF{F[i-1]}\} \\
		\durF{F[i]} & = \max\{ \durF{F[i-1]} + d_{v_{i-1}} + t_{v_{i-1} v_{i}}, \alpha_{v_{i}} - \lsF{v_1}{F[i-1]}\}		
		\end{align}
	\end{subequations}
	for $i=2,\dots,\lenF{F}$. If $\{v_1,v_\lenF{F}\} \in D$ and $\durF{F[\lenF{F}]} < \dmin{v_1 v_\lenF{F}}$ the additional updates 
    $
    \esF{v_{\lenF{F}}}{F[\lenF{F}]} = \max \{\esF{v_{\lenF{F}}}{F[\lenF{F}]}, \alpha_{v_1} + \dmin{v_1 v_{\lenF{F}}} \}$, $\lsF{v_{1}}{F[\lenF{F}]} = \min \{\lsF{v_{1}}{F[\lenF{F}]}, \beta_{v_{\lenF{F}}} - \dmin{v_1 v_{\lenF{F}}}\}$,  and $\durF{F[\lenF{F}]} = \max \{\durF{F[\lenF{F}]}, \dmin{v_1 v_{\lenF{F}}} \}$
    are required.
	Here, $F[i]=(v_1, \dots, v_i)$ is the partial fragment of $F$ until task $i\in \{1, \dots, \lenF{F}\}$. The initial values are set to 
	$\esF{v_{1}}{F[1]}=\alpha_{v_1}$, $\lsF{v_1}{F[1]}=\beta_{v_1}$, and $\durF{F[1]}=0$ and final quantities are obtained as $\lsF{v_{1}}{F}= \lsF{v_{1}}{F[\lenF{F}]}, \esF{v_{\lenF{F}}}{F} = \esF{v_{\lenF{F}}}{F[\lenF{F}]}$, and $\durF{F} = \durF{F[\lenF{F}]}$. 
\end{restatable}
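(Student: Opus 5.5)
We prove the statement by induction on $i$, using the standard characterization of a path with time windows by three quantities: earliest arrival, latest departure, and minimum duration, as in \cite{Desaulniers2000,Irnich2008}. Since compact schedules never insert idle time before intermediate tasks (\cref{obs:compact-schedules}), the end-of-prefix time $b_i$ of the compact schedule of the partial fragment $F[i]$ is a function $f_i(b_1)$ of the start time $b_1$. The domain of $f_i$ is the set of feasible starting times $[\alpha_{v_1},\lsF{v_1}{F[i]}]$.

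\textbf{Key invariant.} On its domain,
\[
f_i(b_1)=\max\{b_1+\durF{F[i]},\ \esF{v_i}{F[i]}\},
\]
and moreover $\esF{v_i}{F[i]}=\alpha_{v_1}+\durF{F[i]}$ or $\esF{v_i}{F[i]}\le \lsF{v_1}{F[i]}+\durF{F[i]}$. In words, $f_i$ is a nondecreasing piecewise linear function with one flat segment followed by a segment of slope one.

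\textbf{Base case.} For $i=1$ we have $f_1(b_1)=b_1$, with $\durF{F[1]}=0$, $\esF{v_1}{F[1]}=\alpha_{v_1}$ and $\lsF{v_1}{F[1]}=\beta_{v_1}$. The invariant holds trivially.

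\textbf{Inductive step.} Assume the invariant for $i-1$ and write $s=d_{v_{i-1}}+t_{v_{i-1}v_i}$. Then
\[
f_i(b_1)=\max\{f_{i-1}(b_1)+s,\ \alpha_{v_i}\}.
\]
Substituting the induction hypothesis and regrouping the maxima gives
\[
f_i(b_1)=\max\bigl\{b_1+\durF{F[i-1]}+s,\ \esF{v_{i-1}}{F[i-1]}+s,\ \alpha_{v_i}\bigr\}.
\]
This matches the recursion for $\esF{v_i}{F[i]}$ (the constant part). The slope-one part is where $\durF{F[i]}$ comes from: the minimum of $f_i(b_1)-b_1$ is attained at the latest start $b_1=\lsF{v_1}{F[i-1]}$. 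There it equals $\max\{\durF{F[i-1]}+s,\ \alpha_{v_i}-\lsF{v_1}{F[i-1]}\}$, provided the flat term $\esF{v_{i-1}}{F[i-1]}+s$ is dominated at that point, which is exactly the second part of the invariant.

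Feasibility at $v_i$ requires $f_i(b_1)\le\beta_{v_i}$. The flat part is feasible whenever the fragment is feasible at all. The slope-one part becomes binding exactly at
\[
b_1\le \beta_{v_i}-s-\durF{F[i-1]},
\]
which yields the $\lsF{v_1}{F[i]}$ update. We then re-verify the second invariant clause for the new triple. The main bookkeeping obstacle is this case analysis of which term of the max is active.

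\textbf{Final step with a dependency.} At $i=\lenF{F}$ with $\{v_1,v_{\lenF{F}}\}\in D$, the compact schedule sets
\[
b_{\lenF{F}}=\max\{f_{\lenF{F}}(b_1),\ b_1+\dmin{v_1v_{\lenF{F}}}\}.
\]
Taking the max with $b_1+\delta^{\min}$ only affects the slope-one part. It raises the duration to $\max\{\durF{F},\delta^{\min}\}$ and the earliest completion time to at least $\alpha_{v_1}+\delta^{\min}$. Feasibility $b_{\lenF{F}}\le\beta_{v_{\lenF{F}}}$ adds the bound $b_1\le\beta_{v_{\lenF{F}}}-\delta^{\min}$. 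These are precisely the stated updates, and they are needed only when $\durF{F[\lenF{F}]}<\delta^{\min}$, since otherwise the extra term never binds. The $\delta^{\max}$ condition is automatically met by delaying $b_{\lenF{F}}$, which is only a lower bound, or by feasibility checks; it does not change the three quantities.

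\textbf{Correctness and running time.} Reading off the definitions of $\lsF{v_1}{F}$, $\esF{v_{\lenF{F}}}{F}$ and $\durF{F}$ as the maximum of the domain, the minimum of $f$, and the minimum of $f(b_1)-b_1$ gives correctness. Each step costs $\mathcal{O}(1)$, so the total time is $\mathcal{O}(\lenF{F})$.
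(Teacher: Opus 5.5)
Your proposal is correct and is essentially the paper's proof. It uses the same induction over the prefixes $F[i]$, and your invariant $f_i(b_1)=\max\{b_1+\durF{F[i]},\esF{v_i}{F[i]}\}$ together with $\esF{v_i}{F[i]}\le\lsF{v_1}{F[i]}+\durF{F[i]}$ restates the paper's claim that the minimum duration is achieved exactly for starts in $[\esF{v_i}{F[i]}-\durF{F[i]},\lsF{v_1}{F[i]}]$; the final $\dmin{v_1 v_{\lenF{F}}}$ correction is also the same. Two local repairs are needed: first, the minimum of $f_i(b_1)-b_1$ over feasible starts of $F[i]$ must be taken at $\lsF{v_1}{F[i]}$, not $\lsF{v_1}{F[i-1]}$ (when $v_i$'s deadline binds, the slope term equals $\beta_{v_i}$ there and dominates both flat terms by feasibility of $F[i]$, so the value is $\durF{F[i-1]}+d_{v_{i-1}}+t_{v_{i-1}v_i}$, which agrees with the recursion and also gives the clause you leave unverified); second, $\dmax{v_1 v_{\lenF{F}}}$ leaves the three quantities unchanged because, given $\durF{F}\le\dmax{v_1 v_{\lenF{F}}}$, it only removes starts $b_1<\esF{v_{\lenF{F}}}{F}-\dmax{v_1 v_{\lenF{F}}}$ lying on the flat part of $f$, not because $b_{\lenF{F}}$ can be delayed, which would only increase $b_{\lenF{F}}-b_1$.
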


From \cref{prop:frag:derive:sched:cond} and its proof, we observe that for a fragment $F$ a schedule with duration $\durF{F}$ can be achieved for each starting time in $[\esF{v_{\lenF{F}}}{F} - \durF{F}, \lsF{v_1}{F}]$. \cref{cor:frag:duration} follows from this observation and the fact that the additional mandatory waiting time resulting from starting a fragment at a time earlier than the one resulting in a minimum duration is equal to the difference between this time and the actual starting time.

\begin{restatable}{corollary}{fragdur}\label{cor:frag:duration}
    The duration of a compact schedule for fragment $F=(v_1, \dots, v_{\lenF{F}})$ that starts at feasible starting time $t\in [\max\{\alpha_{v_1},\esF{v_\lenF{F}}{F} - \dmax{v_1 v_{\lenF{F}}}\}, \lsF{v_\lenF{F}}{F}]$ is equal to 
	\[
	 \durF{F}(t) = \begin{cases}
	 \durF{F} & \mbox{if $t\in [\esF{v_{\lenF{F}}}{F} - \durF{F}, \lsF{v_1}{F}]$ }\\
		\esF{v_{\lenF{F}}}{F} - t & \mbox{ otherwise } \\					
	\end{cases}.
	\]
\end{restatable}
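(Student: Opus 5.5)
The plan is to derive the duration of the compact schedule as a function of its start time $t$ directly from \cref{def:schedule}. The backbone is the observation made after \cref{prop:frag:derive:sched:cond}: a schedule of duration $\durF{F}$ is attainable for every start time in $[\esF{v_{\lenF{F}}}{F} - \durF{F}, \lsF{v_1}{F}]$.

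\textbf{Step 1: closed form for the end time.} For a fixed feasible start $b_1=t$, the compact schedule is uniquely determined by the forward recursion $b_i=\max\{b_{i-1}+d_{v_{i-1}}+t_{v_{i-1}v_i},\alpha_{v_i}\}$. The last entry additionally carries the term $t+\dmin{v_1 v_{\lenF{F}}}$ when $\{v_1,v_{\lenF{F}}\}\in D$. Unrolling this max-plus recursion by induction on $i$ gives
\[
b_{\lenF{F}}(t)=\max\{t+\durF{F},\ \esF{v_{\lenF{F}}}{F}\}.
\]
The slope-one part is the travel-plus-service sum together with the forced minimum gap. The constant part is the earliest arrival obtained from the time windows $\alpha_{v_j}$ of later tasks.

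To make this precise, I will prove by induction the invariant $b_i(t)=\max\{t+\durF{F[i]},\esF{v_i}{F[i]}\}$ for all $t\le \lsF{v_1}{F[i]}$. This is the same quantity tracked by the recursions in \cref{prop:frag:derive:sched:cond}. The inductive step pushes the max through $+d_{v_{i-1}}+t_{v_{i-1}v_i}$ and then takes the max with $\alpha_{v_i}$.

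\textbf{Main obstacle.} The delicate point is that the recursion for $\durF{F[i]}$ uses $\alpha_{v_i}-\lsF{v_1}{F[i-1]}$ rather than a slope-one term. I therefore need to argue that for $t\le \lsF{v_1}{F}$, the terms where $\alpha_{v_i}-\lsF{v_1}{F[i-1]}$ realised the max are absorbed into the constant $\esF{v_{\lenF{F}}}{F}$. The reason is that $\alpha_{v_i}-\lsF{v_1}{F[i-1]}+t\le \alpha_{v_i}$, and waiting at $v_i$ propagates forward into the earliest completion. The $\dmin{}$ update is handled in the same way.

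\textbf{Step 2: the duration.} The duration is
\[
\durF{F}(t)=b_{\lenF{F}}(t)-t=\max\{\durF{F},\ \esF{v_{\lenF{F}}}{F}-t\}.
\]
This equals $\durF{F}$ exactly when $t\ge \esF{v_{\lenF{F}}}{F}-\durF{F}$, and equals $\esF{v_{\lenF{F}}}{F}-t$ otherwise.

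\textbf{Step 3: feasibility interval.} Feasibility requires the following three conditions:
\begin{itemize}
\item $t\ge\alpha_{v_1}$;
\item $t\le\lsF{v_1}{F}$;
\item $\durF{F}(t)\le\dmax{v_1v_{\lenF{F}}}$.
\end{itemize}
The third condition reduces, on the second branch, to $t\ge \esF{v_{\lenF{F}}}{F}-\dmax{v_1v_{\lenF{F}}}$. Together these match the stated domain of $t$. I read the upper endpoint $\lsF{v_{\lenF{F}}}{F}$ in the statement as the fragment's latest start $\lsF{v_1}{F}$.
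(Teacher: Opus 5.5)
Your proposal is correct and follows essentially the same argument as the paper. The minimum duration $\durF{F}$ is attained for starts in $[\esF{v_{\lenF{F}}}{F}-\durF{F},\lsF{v_1}{F}]$, and for earlier starts the arrival at $v_{\lenF{F}}$ is pinned at $\esF{v_{\lenF{F}}}{F}$, so the duration is $\esF{v_{\lenF{F}}}{F}-t$. Your invariant $b_i(t)=\max\{t+\durF{F[i]},\esF{v_i}{F[i]}\}$, including the absorption of the $\alpha_{v_i}-\lsF{v_1}{F[i-1]}$ and $\dmin{}$ terms, makes rigorous what the paper only asserts informally as ``additional mandatory waiting time.'' Your reading of the upper endpoint as $\lsF{v_1}{F}$ and your derivation of the $\dmax{}$-based lower endpoint are consistent with the statement.
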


The minimum duration of a fragment is crucial for enforcing timing and temporal dependency constraints in our fragment-based formulation~\eqref{eq:ff} which considers the set of all fragments $\AllFrag$ and associates a 
variable $x_F\in \{0,1\}$ to each fragment $F\in \AllFrag$.
These variables indicate whether fragment $F\in \AllFrag$ is selected. 
We use $\FragTo{v}$ and $\FragFrom{v}$ to denote all fragments starting and ending at task $v \in \TDVis$, respectively. Similarly, $\FragFromTo{S}{S'}$ denotes the set of fragments starting at $S\subseteq \{0\} \cup \TDVis$ and ending at $S'\subseteq \{0\} \cup \TDVis$. We use $\textrm{end(F)}$ to denote the last task of a fragment $F \in \AllFrag$. 

\begin{subequations}\label{eq:ff}
\begin{align}
\min \quad   & \sum_{F \in \AllFrag} \costF{F} x_{F} \label{eq:ff:obj}  \\
\mbox{s.t.}\quad & \sum_{F \in \FragFrom{0}} 
     x_{F} \leq |\Veh| 
     \label{eq:ff:vehicle:limit} \\ 
& \sum_{\substack{F \in \AllFrag : v \in F \\ v \neq \textrm{end(F)}}} x_{F}  = 1 & \forall v \in V   
\label{eq:ff:ass} \\
& \sum_{F \in \FragTo{v}} x_{F}  - \sum_{F \in \FragFrom{v}} x_{F} = 0 & \forall v\in \TDVis   
\label{eq:ff:flow} \\ 
& \mbox{timing constraints } \label{eq:ff:tc}\\
& \mbox{temporal-dependency constraints} \label{eq:ff:tdc}\\
& \mbox{capacity constraints} \label{eq:ff:cc}\\
& x_{F} \in  \{0,1\} & \forall F \in \AllFrag \label{eq:ff:x}
\end{align}
\end{subequations}

The objective \eqref{eq:ff:obj} minimizes the total travel cost. Constraints \eqref{eq:ff:vehicle:limit} ensure that at most $|\Veh|$ 
vehicles are used and equations \eqref{eq:ff:ass} make sure that each task is performed. Flow conservation constraints~\eqref{eq:ff:flow} connect consecutive fragments at tasks with temporal dependencies and ensure that the selected fragments form routes.  
While each individual fragment corresponds to a feasible partial route, we need to ensure that their combinations correspond to routes starting and ending at the depot that allow for feasible schedules with respect to task starting times (\emph{timing constraints} \eqref{eq:ff:tc}), temporal dependencies (\emph{temporal-dependency constraints} \eqref{eq:ff:tdc}), and vehicle capacities (\emph{capacity constraints} \eqref{eq:ff:cc}). 
In the following, we discuss how to enforce these conditions by means of additional variables and constraints replacing \eqref{eq:ff:tc}--\eqref{eq:ff:cc}.
Alternatively, infeasible fragment combinations could be eliminated through cutting planes by adapting so-called infeasible path inequalities which are, however, known to lead to weak formulations, see, e.g., \cite{Rist2021}.

\paragraph{Timing constraints}
A sequence of fragments can form a feasible route if each fragment can start after all its predecessors have been completed. Since a feasible starting time of a fragment implies that there is a schedule such that all its tasks can be performed within their time windows, it is sufficient to impose timing restrictions at tasks with temporal dependencies. Therefore, variables $b_v\ge 0$ representing the starting times of tasks with temporal dependencies $v\in \TDVis$ are used in timing constraints \eqref{eq:ff:time:mtz}-\eqref{eq:ff:time:ub} used in place of abstract condition~\eqref{eq:ff:tc}.
\begin{subequations}\label{eq:ff:time:all}
\begin{align}
&  b_{u} + \sum_{F \in \FragFromTo{u}{v}} \durF{F} x_F  \leq   b_{v} + (\beta_{u}-\alpha_v) (1- \sum_{F \in \FragFromTo{u}{v}} x_F) & \forall u,v \in \TDVis 
\label{eq:ff:time:mtz} \\  
&b_v - \sum_{F \in \FragTo{v}} \esF{v}{F} x_F  \ge 0 & \forall v\in \TDVis  \label{eq:ff:time:lb} \\   
&- b_v + \sum_{F \in \FragFrom{v}} \lsF{v}{F} x_F  \ge 0 & \forall v\in \TDVis \label{eq:ff:time:ub} 
\\ 
& b_{v}  \geq  0 & \forall v \in \TDVis \label{eq:ff:time:b} 
\end{align}
\end{subequations}
Inequalities~\eqref{eq:ff:time:mtz} ensure that the starting time difference between two tasks with temporal dependencies is at least the duration of the fragment connecting them. 
Constraints \eqref{eq:ff:time:lb} and \eqref{eq:ff:time:ub} enforce bounds on the starting times of tasks with temporal dependencies based on the selected ingoing and outgoing fragments. Together, they ensure that the duration of each fragment matches the one given by \cref{cor:frag:duration}.

\paragraph{Temporal-dependency constraints \eqref{eq:ff:tdc}}
Conditions~\eqref{eq:ff:tdc} are enforced by constraints~\eqref{eq:ff:td:uv:le}-\eqref{eq:ff:td:p} which ensure that the temporal restrictions between each pair of tasks $\{u,v\}\in D$ are satisfied.
Here, variables $p_{uv} \in \{0,1\}$ indicate the order of tasks $\{u,v\}\in D$, where $p_{uv} = 1$ indicates that $u$ starts before or at the same time as $v$.
\begin{subequations}
\begin{align}       
& b_v - b_u \le \delta^{\mathrm{max}}_{uv} p_{uv}           & \forall \{u,v\}\in D, u<v \label{eq:ff:td:uv:le}\\  
& b_u - b_v \le \delta^{\mathrm{max}}_{vu}  (1-p_{uv})              & \forall \{u,v\}\in D, u<v \label{eq:ff:td:vu:le} \\             
& b_v - b_u \ge \delta^{\mathrm{min}}_{uv} p_{uv} - \max\{0,\beta_u-\alpha_v\} (1 - p_{uv})   &          \forall \{u,v\}\in D, u<v \label{eq:ff:td:uv:ge}\\     
& b_u - b_v \ge \delta^{\mathrm{min}}_{vu} (1 - p_{uv})  -\max\{0,\beta_v-\alpha_u\} p_{uv} & \forall \{u,v\}\in D, u<v \label{eq:ff:td:vu:ge}  \\ 
& p_{uv}  \in  \{0,1\} & \forall  \{u,v\}\in D, u<v  \label{eq:ff:td:p}
\end{align}
\end{subequations} 
Inequalities \eqref{eq:ff:td:uv:le} and \eqref{eq:ff:td:vu:le} enforce the maximum allowed starting time difference ($\dmax{uv}$ or $\dmax{vu}$) for a pair of tasks $\{u,v\} \in D$. 
Constraints \eqref{eq:ff:td:uv:ge} and \eqref{eq:ff:td:vu:ge} ensure the minimum difference ($\dmin{uv}$ or $\dmin{vu}$) between their starting times. 
The order variables $p_{uv}$ activate the appropriate inequality depending on the order in which tasks  $\{u,v\}\in D$ are performed.

\paragraph{Capacity constraints \eqref{eq:ff:cc}}
Constraints~\eqref{eq:ff:load:mtz}-\eqref{eq:ff:load:l} ensure that no route exceeds the vehicle capacity. As for timing constraints~\eqref{eq:ff:time:all}, it is sufficient to impose load restrictions at tasks with temporal dependencies. 
This is achieved using variables $l_{u} \geq 0$ representing the total demand served in the corresponding route until task $u \in \TDVis$.
\begin{subequations}\label{eq:ff:load:all}
\begin{align}       
& l_{u} + \sum_{F \in \FragFromTo{u}{v}} q_{F} x_F \leq   l_{v} +(\VehCap - q_u) (1- \sum_{F \in \FragFromTo{u}{v}} x_F)  & \forall u,v \in \TDVis 
\label{eq:ff:load:mtz}  \\ 
&l_{v} - \sum_{F \in \FragTo{v}} q_{F} x_F   \geq 0    & \forall v \in \TDVis  
\label{eq:ff:load:lb}   \\
&  \VehCap -l_{v} - \sum_{F \in \FragFrom{v}} q_{F} x_F  \geq 0 & \forall v \in \TDVis \label{eq:ff:load:ub} \\
& l_{v}  \geq  0 & \forall v \in \TDVis \label{eq:ff:load:l}
\end{align}
\end{subequations}
Constraints \eqref{eq:ff:load:mtz} ensure that the load of a fragment is accounted for when tasks $u,v\in \TDVis$ are connected via a fragment. 
Inequalities \eqref{eq:ff:load:lb} and \eqref{eq:ff:load:ub} bound the served demand at tasks with temporal dependencies.

\section{Valid inequalities}\label{sec:valineq}
In this section, we propose several families of valid inequalities (VI) that exploit the definition of fragments. While the constraints introduced in \cref{sec:fsec} share similarities with classical subtour elimination constraints, those in \cref{sec:tifi,sec:tdifi} generalize path-elimination constraints previously used for routing problems with synchronization constraints 
\citep{Luo2016}. Finally, the inequalities introduced \cref{sec:fcc} are based on (rounded) capacity constraints.

\subsection{Fragment-based subtour elimination constraints}\label{sec:fsec}
Subtour elimination constraints are frequently used to ensure connectivity in vehicle routing formulations.
The fragment-based subtour elimination constraints (FSECs) introduced below eliminate subtours implied by sets of fragments while accounting for the minimum number of vehicles required to serve a given set of temporal dependency tasks. Let $S\subseteq \TDVis$ be a set of tasks with temporal dependencies and $\FSECVehMin{S}$ be the minimum number of vehicles required to perform all tasks in $S$. This quantity depends on the time windows of tasks, temporal dependencies between them, their demands, and the vehicle capacity.
FSECs~\eqref{eq:FSEC} ensure that the subgraph induced by all fragments connecting tasks from $S$ consists of at least $\FSECVehMin{S}$ components. 
\begin{align}\label{eq:FSEC}
        & \sum_{F \in \FragFromTo{S}{S}} x_F \leq |S| -\FSECVehMin{S} &  \forall S \subseteq \TDVis
\end{align}

FSECs enforce that every feasible solution contains at least $\FSECVehMin{S}$ fragments entering set $S \subseteq \TDVis$, since each task with a temporal dependency must have exactly one ingoing and one outgoing fragment (see \eqref{eq:ff:ass} and \eqref{eq:ff:flow}). 
We note that computing $\FSECVehMin{S}$ for a given set $S \subseteq \TDVis$ and, therefore, separating FSECs is NP-hard. This follows since it is equivalent to the NP-hard bin-packing problem \citep{Garey1979} when time windows and temporal dependencies are relaxed.

\subsection{Time-infeasible fragment inequalities}\label{sec:tifi}
The inequalities proposed in this section eliminate combinations of fragments incident to a task with temporal dependencies whose simultaneous usage would violate the timing constraints. Consider a task $v\in \TDVis$ and two fragments $F\in \FragTo{v}$ and $F'\in \FragFrom{v}$ that end and start at task $v$, respectively. Fragments $F$ and $F'$ are incompatible if the earliest completion time of $F$ at task $v$ is larger than the latest departure time of $F'$. Generalizing this observation to two sets of pairwise incompatible fragments associated with a task $v\in \TDVis$ yields the Time-Infeasible Fragment Inequalities (TIFI) 
\begin{equation}\label{eq:InfPathV}
    \sum_{\substack{F \in \FragTo{v} :\\ \esF{v}{F} \geq t}} x_{F} + \sum_{\substack{F \in \FragFrom{v} :\\ \lsF{v}{F} < t}} x_F\leq 1 \quad \forall v \in \TDVis,\ t \in [\alpha_v, \beta_v].
\end{equation}

An upper bound on the number of relevant TIFIs is given in \cref{thm:InfPathV:num} which characterizes the set of time points for which non-dominated TIFIs may be identified. 

\begin{restatable}{theorem}{numtifi}\label{thm:InfPathV:num}
For each task $v\in \TDVis$, it is sufficient to consider TIFIs at time points $t\in \{\esF{v}{F} : F \in \FragTo{v}\}$ to obtain all non-dominated TIFIs.
\end{restatable}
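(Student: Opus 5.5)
For a fixed $v\in\TDVis$ and $t\in[\alpha_v,\beta_v]$, write the left-hand side of \eqref{eq:InfPathV} as $\sum_{F\in A(t)}x_F+\sum_{F\in B(t)}x_F$. Here $A(t)=\{F\in\FragTo{v}:\esF{v}{F}\ge t\}$ and $B(t)=\{F\in\FragFrom{v}:\lsF{v}{F}<t\}$. All coefficients are $0/1$ and the right-hand side is always $1$. So TIFI$(t')$ dominates TIFI$(t)$ whenever $A(t)\subseteq A(t')$ and $B(t)\subseteq B(t')$: any nonnegative $x$ satisfying the former then satisfies the latter. The plan is to map every $t$ to a time point $t'$ in the candidate set $T_v=\{\esF{v}{F}:F\in\FragTo{v}\}$ whose TIFI dominates it.

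Fix an arbitrary $t\in[\alpha_v,\beta_v]$. If $A(t)=\emptyset$, the inequality has the form $\sum_{F\in B(t)}x_F\le 1$. This is already implied by flow conservation \eqref{eq:ff:flow} together with assignment \eqref{eq:ff:ass} for $v$, since the outgoing fragments of $v$ sum to exactly $1$. So it is trivially dominated and can be ignored.

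If $A(t)\neq\emptyset$, let $t'=\min\{\esF{v}{F}:F\in A(t)\}$, which lies in $T_v$ and satisfies $t'\ge t$. Because no fragment $F\in\FragTo{v}$ has $\esF{v}{F}\in[t,t')$, we get $A(t')=A(t)$. Also, $B$ is monotone nondecreasing in its threshold, so $t'\ge t$ gives $B(t)\subseteq B(t')$. Hence TIFI$(t')$ dominates TIFI$(t)$.

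It remains to check that $t'$ is an admissible point, i.e.\ $t'\in[\alpha_v,\beta_v]$. The lower bound follows from $t'\ge t\ge\alpha_v$. For the upper bound, if some fragment has $\esF{v}{F}>\beta_v$, it is infeasible by \cref{def:frag}, since feasibility requires a starting time of $v$ in its window. Such fragments can be assumed absent from $\AllFrag$; equivalently, every $F\in\FragTo{v}$ satisfies $\esF{v}{F}\le\beta_v$ by the recursion of \cref{prop:frag:derive:sched:cond}.

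The only delicate step is this boundary bookkeeping: restricting to feasible fragments, and handling the case $A(t)=\emptyset$ via \eqref{eq:ff:flow}. The core argument is simply the monotonicity of $A(\cdot)$ and $B(\cdot)$ in $t$ combined with the jump structure of $A(\cdot)$.
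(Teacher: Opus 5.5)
Your proposal is correct and follows essentially the same argument as the paper. The paper also observes that the ingoing set $A(\cdot)$ is constant between consecutive earliest completion times while the outgoing set $B(\cdot)$ is nondecreasing in $t$, so each TIFI is dominated by the one at the smallest earliest completion time $t'\ge t$; TIFIs with $A(t)=\emptyset$ (i.e., $t$ beyond the largest completion time) are implied by \eqref{eq:ff:ass} and \eqref{eq:ff:flow}. Your additional check that $t'\in[\alpha_v,\beta_v]$ is a detail the paper leaves implicit.
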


\subsection{Temporal dependency infeasible fragment inequalities}\label{sec:tdifi}
In this section, we propose inequalities that eliminate combinations of fragments incident to pairs of tasks $\{u,v\}\in D$ whose simultaneous usage would violate the temporal dependencies between the two tasks. Analogous to the case of single tasks discussed above, an ingoing fragment $F\in \FragTo{u}$ of task $u$ may be incompatible with an outgoing fragment $F'\in \FragFrom{v}$ of task $v$ if their latest starting and earliest completion times do not allow satisfying the temporal dependency between the two tasks. Whether two fragments are incompatible may, however, also depend on the order of the two tasks (i.e., whether task $u$ precedes task $v$) which is typically not known in advance. Indeed, a pair of fragments $F\in \FragTo{u}$ and $F'\in \FragFrom{v}$ 
where $u$ cannot start before $\esF{u}{F}$ and $v$ cannot start after $\lsF{v}{F'} < \esF{v}{F} + \dmin{uv}$
is incompatible only if task $u$ precedes task $v$. This is captured by the inequality
\begin{equation}
x_F + x_{F'} \le 2 - p_{uv}.\label{eq:TDIFI}
\end{equation} 

The set of temporal dependency infeasible fragment inequalities~\eqref{eq:InfPathTD} referred to as TDIFIs is obtained by: \begin{inparaenum}[(i)]
	\item lifting from a single pair to sets of pairwise incompatible fragments; 
	\item considering maximum in addition to minimum time differences; and
	\item covering the case when $v$ precedes $u$. 
\end{inparaenum}
\begin{subequations}\label{eq:InfPathTD}
\begin{align}
    & \sum_{\substack{F \in \FragTo{u} : \\ \esF{u}{F} \geq t }} x_{F} + \sum_{\substack{F \in \FragFrom{v} :\\ \lsF{v}{F} < t+\dmin{uv}}} x_{F} \leq 2 - p_{uv}  & \forall \{u,v\} \in D, u<v, t \in [\alpha_u, \beta_u]  \label{eq:InfPathTD:uv:min}\\
    & \sum_{\substack{F \in \FragFrom{u} :\\ \lsF{u}{F} \leq t}} x_{F} + \sum_{\substack{F \in \FragTo{v}:\\ \esF{v}{F} > t+\dmax{uv}}} x_{F} \leq 2 - p_{uv}  & \forall \{u,v\} \in D, u<v, t \in [\alpha_u, \beta_u] \label{eq:InfPathTD:uv:max}\\
    & \sum_{\substack{F \in \FragTo{v} :\\ \esF{v}{F} \geq t}} x_{F} + \sum_{\substack{F \in \FragFrom{u}:\\ \lsF{u}{F} < t+\dmin{vu}}} x_{F} \leq 1 + p_{uv}  &\forall \{u,v\} \in D, u<v, t \in [\alpha_v, \beta_v] \label{eq:InfPathTD:vu:min}\\ 
    & \sum_{\substack{F \in \FragFrom{v} :\\ \lsF{v}{F} \leq t}} x_{F} + \sum_{\substack{F \in \FragTo{u} : \\  \esF{u}{F} > t+\dmax{vu}}} x_{F} \leq 1 + p_{uv} & \forall \{u,v\} \in D, u<v, t \in [\alpha_v, \beta_v] \label{eq:InfPathTD:vu:max}
\end{align}
\end{subequations}

Inequalities~\eqref{eq:InfPathTD:uv:min} exclude fragment combinations that are incompatible with respect to  the minimum starting-time difference $\dmin{uv}$ between $u$ and $v$ if $u$ precedes $v$. They generalize constraint~\eqref{eq:TDIFI} by considering each relevant point in time $t\in [\alpha_u,\beta_u]$ and lifting it using sets of pairwise incompatible fragments, i.e., adding all ingoing fragments of $u$ that cannot finish before $t$ and all outgoing fragment of $v$ that must start before $t+ \dmin{uv}$. Inequalities \eqref{eq:InfPathTD:uv:max} focus on the maximum allowed difference $\dmax{uv}$ between the starting time of $u$ and $v$. They enforce that each solution may include at most one fragment from the sets $\{F \in \FragFrom{u}: \lsF{u}{F} \leq t \}$ and $\{F \in \FragTo{v} : \esF{v}{F} > t + \dmax{uv}\}$ when $p_{uv} = 1$. These inequalities are valid since using a fragment from the former set implies that task $u$ cannot start later than $t$ while using a fragment from the latter set implies that task $v$ has to start later than $t+\dmax{uv}$.
Constraints \eqref{eq:InfPathTD:vu:min} and \eqref{eq:InfPathTD:vu:max} are analogous to \eqref{eq:InfPathTD:uv:min} and \eqref{eq:InfPathTD:uv:max} for the case when $v$ start before or at the same time as $u$, i.e., they forbid certain fragment combinations when $p_{uv}=0$.

\cref{thm:InfPathTD:num} establishes an upper bound on the number of relevant TDIFIs and follows from arguments analogous to the ones in the proof of \cref{thm:InfPathV:num}.

\begin{restatable}{theorem}{numtdifi}\label{thm:InfPathTD:num}
For each pair of tasks $\{u,v\}\in D$ with temporal dependencies, it is sufficient to consider time points from $\{\esF{u}{F} : F \in \FragTo{u}\} $, $\{\lsF{u}{F} : F \in \FragFrom{u}\}$, $\{\esF{v}{F} : F \in \FragTo{v}\}$, and $t\in \{\lsF{v}{F} : F \in \FragFrom{v}\}$ to obtain all non-dominated inequalities \eqref{eq:InfPathTD:uv:min}, \eqref{eq:InfPathTD:uv:max}, \eqref{eq:InfPathTD:vu:min}, and \eqref{eq:InfPathTD:vu:max}, respectively.
\end{restatable}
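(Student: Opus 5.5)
The plan is to reuse, for each of the four families, the monotonicity argument behind \cref{thm:InfPathV:num}. Fix $\{u,v\}\in D$ with $u<v$ and one family, say \eqref{eq:InfPathTD:uv:min}. The left-hand side is the sum of two fragment sets indexed by $t$: $A(t)=\{F\in\FragTo{u}:\esF{u}{F}\ge t\}$ and $B(t)=\{F\in\FragFrom{v}:\lsF{v}{F}<t+\dmin{uv}\}$. The right-hand side $2-p_{uv}$ does not depend on $t$. An inequality indexed by $t'$ therefore dominates the one indexed by $t$ whenever $A(t)\subseteq A(t')$ and $B(t)\subseteq B(t')$, since the variables are nonnegative.

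The key step is to take any $t\in[\alpha_u,\beta_u]$ and raise it to
\[
t'=\min\{\esF{u}{F}: F\in\FragTo{u},\ \esF{u}{F}\ge t\}.
\]
If no such fragment exists, $A(t)=\emptyset$, and the inequality reduces to a sum over $B(t)$ bounded by $2-p_{uv}$. This is implied by flow constraint \eqref{eq:ff:flow} together with \eqref{eq:ff:ass}, which give at most one outgoing fragment at $v$ and hence total at most $1\le 2-p_{uv}$. So it is either redundant or covered by the largest breakpoint.

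Otherwise, $A(t')=A(t)$, because no value $\esF{u}{F}$ lies in $[t,t')$. Also $B(t)\subseteq B(t')$, because $B$ is monotone nondecreasing in $t$. Hence the inequality at $t'$ dominates the one at $t$, and $t'$ lies in $\{\esF{u}{F}:F\in\FragTo{u}\}$. I also need $t'\in[\alpha_u,\beta_u]$. The lower bound holds since $\esF{u}{F}\ge\alpha_u$. For the upper bound, a fragment's earliest completion time at $u$ is at most $\beta_u$ by feasibility (\cref{def:frag}).

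Family \eqref{eq:InfPathTD:vu:min} is identical with $u$ and $v$ swapped, giving breakpoints $\{\esF{v}{F}:F\in\FragTo{v}\}$. For the max families the monotonicity runs the other way. In \eqref{eq:InfPathTD:uv:max}, $A(t)=\{F\in\FragFrom{u}:\lsF{u}{F}\le t\}$ grows with $t$, while $B(t)=\{F\in\FragTo{v}:\esF{v}{F}>t+\dmax{uv}\}$ shrinks with $t$. So I instead lower $t$ to
\[
t'=\max\{\lsF{u}{F}:F\in\FragFrom{u},\ \lsF{u}{F}\le t\}.
\]
Then $A(t')=A(t)$ and $B(t)\subseteq B(t')$, and the empty case is handled as before. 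This gives breakpoints $\{\lsF{u}{F}:F\in\FragFrom{u}\}$, and symmetrically $\{\lsF{v}{F}:F\in\FragFrom{v}\}$ for \eqref{eq:InfPathTD:vu:max}.

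The main obstacle is the boundary bookkeeping: the mix of strict and non-strict comparisons, and keeping the shifted $t'$ inside the stated interval $[\alpha_u,\beta_u]$ or $[\alpha_v,\beta_v]$. In the max case, $\lsF{u}{F}\ge\alpha_u$ must hold for feasible fragments, which I would justify from \cref{prop:frag:derive:sched:cond}, since a feasible fragment has a nonempty set of compact schedules. I would check these details once carefully for \eqref{eq:InfPathTD:uv:min} and then state that the other three families follow by symmetry.
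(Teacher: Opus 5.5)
Your proposal is correct and follows essentially the same argument as the paper. For each family, one index set is constant between consecutive breakpoints while the other is monotone. Any intermediate $t$ is therefore dominated by the next breakpoint (min families) or the previous one (max families). The out-of-range cases are implied by $\sum_{F\in\FragFrom{w}}x_F\le 1$, which follows from \eqref{eq:ff:ass} and \eqref{eq:ff:flow}. Your explicit choice of $t'$ and the check that it stays within the time window are just a more careful write-up of the same reasoning.
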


A stronger version of inequalities~\eqref{eq:InfPathTD:uv:max} can be obtained by reducing their right-hand side from $2-p_{uv}$ to $1$. For a given pair $\{u,v\} \in D$ the left-hand side value of inequalities~\eqref{eq:InfPathTD:uv:max} cannot exceed $1$ in case $p_{uv}=0$ since it contains only pairs of fragments from $\FragFrom{u}$ and $\FragTo{v}$ that must start at task $u$ earlier than arriving at task $v$. 
The right-hand side of inequalities~\eqref{eq:InfPathTD:vu:max} can be set to $1$ following a similar argument.

\subsection{Fragment-based (rounded) capacity constraints}\label{sec:fcc}

(Rounded) capacity constraints are commonly used in edge- or arc-based models for capacitated routing problems \citep{Laporte1983, Lysgaard2004}. Given a set of tasks $S\subseteq V$, they ensure that the number of vehicles visiting at least one task in $S$ is sufficient to serve the total demand of all tasks in $S$. In the space of fragment variables, this can be enforced by fragment-based rounded capacity constraints (FRCCs) 
\begin{align}
	& \sum_{F\in \delta^-_\AllFrag(S)} x_F \ge \left \lceil \frac{\sum_{v\in S} q_v}{Q} \right \rceil & \forall S\subseteq V, \label{eq:FRCC}
\end{align}
where $\delta^{-}_\AllFrag(S)=\{(v_1, \dots, v_\ell)\in \AllFrag : v_1\notin S, (\cup_{i=1}^\ell \{v_i\}) \cap S \ne \emptyset\}$ denotes the set of fragments that start outside of $S$ and visit at least one task from $S$. Their validity follows from the observation that the number of fragments entering $S$ at least once is an upper bound on the number of vehicles that perform tasks from $S$.

Obviously, FRCCs are closely related to (standard) rounded capacity constraints
\begin{equation}\label{eq:RCC}
    \sum_{u \in S} \sum_{v \in N \setminus S} \sum_{\substack{F \in \AllFrag : \\ v = succ(u)}}
     x_{F}  \geq  \left \lceil \frac{\sum_{v \in S} q_v}{\VehCap} \right \rceil \quad \forall S \subseteq V,
\end{equation}
in which $succ(u)$ denotes the successor of task $u$ in fragment $F$.
Notice that the left-hand side of FRCCs~\eqref{eq:FRCC} contains every fragment entering set $S$ only once, while the coefficient of each such fragment on the left-hand side of RCCs~\eqref{eq:RCC} is equal to the number of times it enters set $S$. Thus, each fragment-based rounded capacity constraint~\eqref{eq:FRCC} dominates the rounded capacity constraint~\eqref{eq:RCC} defined for the same set of nodes.

\section{Solution approach}\label{sec:solappr}

This section describes our exact fragment-based solution algorithm (\solalg) for the \probabbr based on price-cut-and-enumerate. \solalg first uses variants of the proposed fragment-based formulation enhanced with valid inequalities to compute an initial lower bound ($\LBalg$) and upper bound ($\IncSolVal$) on the optimal solution. Subsequently, it iteratively refines these bounds through fragment enumeration, fragment reduction, and branch-and-cut until it either converges to a proven optimal solution or reaches a given time limit. The algorithm maintains the set of fragments contained in the best solution found so far ($\IncSol$) and a candidate upper bound ($\UBguess$), which depends on parameters $\UBinitstep$ and $\UBstep$. Throughout this description of \solalg, \FF denotes formulation~\eqref{eq:ff} with timing constraints \eqref{eq:ff:time:mtz}-\eqref{eq:ff:time:b}, temporal dependency constraints \eqref{eq:ff:td:uv:le}-\eqref{eq:ff:td:p}, and capacity constraints \eqref{eq:ff:load:mtz}-\eqref{eq:ff:load:l}. Notation \FFVI refers to \FF strengthened by inequalities \eqref{eq:FSEC}, \eqref{eq:InfPathV}, \eqref{eq:InfPathTD}, and \eqref{eq:RCC}, whereas \FFVIm denotes \FF strengthened by constraints \eqref{eq:FSEC}, \eqref{eq:InfPathV}, \eqref{eq:InfPathTD}, and \eqref{eq:FRCC}.
Furthermore, for each considered formulation $\mathcal{X}$ we use $\REL{\mathcal{X}}$ to refer to its linear relaxation, $\FORM{\mathcal{X}}[\AllFrag']$ to denote the variant of $\mathcal{X}$ restricted to the set of fragments $\AllFrag'\subseteq \AllFrag$, and $\VAL[\mathcal{X}]$ to denote the solution value obtained by solving it (which is assumed to be $+\infty$ if no feasible solution is found).

The main steps of \solalg are as follows:

\begin{enumerate}
    \item Preprocessing: Add implied temporal dependencies and update time windows; see \cref{app:pre:procc}.
    \item Lower Bound: Solve \relFFVI 
    via column-and-row generation to obtain the initial lower bound $\LBalg$ and the associated set of fragments $\CGfrag$; see \cref{ss:solappr:init:lb}. \label{sol:proc:step:lb}
    \item Upper Bound: Solve $\FFVIm[\CGfrag]$ via branch-and-cut 
    to obtain an upper bound $\IncSolVal=\VAL[\FFVIm[\CGfrag]]$ and fragments $\IncSol$; see \cref{ss:solappr:MIP}. Set $\UBguess= \min \{\VAL[\FFVIm[\CGfrag]], (1+\UBinitstep) \LBalg\}$. \label{sol:proc:step:init:ub}
    \item Fragment Enumeration: Enumerate all fragments $\ENfrag$ with reduced cost not exceeding $\UBguess - \VAL[\relFFVI[\CGfrag]]$ with respect to the dual values obtained in Step \ref{sol:proc:step:lb}, and remove fragments that cannot contribute to a solution with value below $\UBguess$; see \cref{ss:solappr:frag:enum}. \label{sol:proc:step:enum}
    \item Branch-and-Cut: Solve $\FFVIm[\ENfrag \cup \IncSol]$ with branch-and-cut; see \cref{ss:solappr:MIP}. If $\VAL[\FFVIm[\ENfrag \cup \IncSol]] < \IncSolVal$, set $\IncSolVal = \VAL[\FFVIm[\ENfrag \cup \IncSol]]$ and update $\IncSol$. \label{sol:proc:step:MIP}
    \item Termination Check: Set $\LBalg = \min\{\UBguess, \IncSolVal\}$. If $\IncSolVal \leq \LBalg$, the current solution is optimal and the algorithm terminates.  Otherwise, update the candidate upper bound to $\UBguess= \min\{\UBguess + \UBstep \cdot \VAL[\relFFVI[\CGfrag]], \IncSolVal\}$ and return to Step~\ref{sol:proc:step:enum}.
    \label{sol:proc:recursion}
\end{enumerate}
\medskip
\solalg follows the principle of enumerating all columns possibly required in an optimal solution as introduced by \cite{Baldacci2008,Baldacci2011}.
We note that our fragment-based formulation could also be solved using a branch-price-and-cut algorithm and that the computational attractiveness of our enumeration-based method mainly depends on the number of fragments that must be enumerated. 
This number, and thus the computation times and memory requirements, highly depends on the gap between the lower and upper bounds known prior to the enumeration step and the fragment length. In that respect, the definition of fragments naturally balances the fragment length and the formulation strength, limiting the number of fragments that must be enumerated. When there are few temporal dependencies, fragments represent almost complete routes, yielding a strong formulation and thus limiting the number of fragments that must be enumerated. Conversely, when most tasks have temporal dependencies, fragments become very short, typically corresponding to a single task and consecutive travel, in which case the number of fragments is limited due to their short length. As the results of our computational study will show, our algorithm obtains tight lower bounds from the column-and-row generation step, while high-quality solutions and tight upper bounds are typically found in Step~\ref{sol:proc:step:init:ub}. Nevertheless, a candidate upper bound, which can be iteratively increased if needed, is used as a fallback option for the few cases in which the enumeration step would require too much time or memory due to a larger gap between the initial lower and upper bounds. 

The pre-processing described in \cref{app:pre:procc} adds explicit temporal restrictions for task pairs that are linked through (a chain of) temporal dependency constraints, and the network of temporal dependencies is used to (potentially) strengthen each of them. Moreover, the pre-processing tightens the time windows of tasks by accounting for travel time from and to the depot as well as temporal dependency restrictions.

\subsection{Lower Bound Computation}\label{ss:solappr:init:lb}
This subsection describes the column-and-row generation procedure used to obtain the initial lower bound.  
\cref{sss:solappr:lb:mp} presents the master problem, \cref{sss:solappr:lb:subproblem} defines the pricing problem, \cref{sss:solappr:lb:dynamic:program} introduces an algorithm for solving the pricing problem, and \cref{sss:solappr:lb:acce} presents acceleration techniques. Finally, \cref{sss:solappr:lb:vi} discusses the separation of valid inequalities and their impact on the labeling algorithm.  

\subsubsection{Master Problem}\label{sss:solappr:lb:mp}

The column-and-row generation process for solving the linear relaxation of \FFVI starts from the master program \eqref{eq:lr} for $\relFF$ without VIs.
\begin{subequations}\label{eq:lr}
\begin{align}
\min \quad   & \sum_{F\in \CGfrag} \costF{F} x_{F} \label{eq:lr:obj}  \\
\mbox{s.t.}\quad &  \eqref{eq:ff:vehicle:limit}-\eqref{eq:ff:flow}, \eqref{eq:ff:time:all}, \eqref{eq:ff:td:uv:le}-\eqref{eq:ff:td:vu:ge}, \eqref{eq:ff:load:all} \nonumber \\
        & x_{F} \geq 0 & \forall F \in \CGfrag \\ 
        & p_{uv} \geq 0 & \forall \{u,v\} \in D, u < v
\end{align}
\end{subequations}

Initially, the set of fragments $\CGfrag$ consists of all fragments containing exactly two nodes from $\{0\}\cup \TDVis$, all fragments corresponding to vehicles routes starting and ending at the depot that visit exactly one task without temporal dependencies, and an artificial fragment with very high cost that satisfies all constraints and therefore ensures feasibility.
The set $\CGfrag$ is iteratively extended and VIs are added as cutting planes until the optimal solution to \relFFVI is found.
Within each pricing iteration, the dual values of the constraints are used to identify fragments starting at $v$ with minimal reduced costs for each task $v \in \TDVis \cup \{0\}$.
If no fragments with negative reduced costs are found, the row generation procedure separates and adds FSECs~\eqref{eq:FSEC}, TIFIs~\eqref{eq:InfPathV}, TDIFIs~\eqref{eq:InfPathTD}, and RCCs~\eqref{eq:RCC}.
FRCCs~\eqref{eq:FRCC} are not considered in this step due to their challenging separation problem and their impact on the pricing problem.
The column-and-row generation process terminates if neither negative reduced cost variables nor VIs have been added.

\subsubsection{Pricing Problem}\label{sss:solappr:lb:subproblem}

The pricing problem of the column generation algorithm is defined as 
\begin{equation}\label{eq:pricing_problem}
    \begin{array}{ll}
        \displaystyle \min_{\substack{
            u, v \in V_D \cup \{0\}, \\
            F \in \AllFrag_{u \to v}
        }} \Bigg\{ 
             & \sum_{(i,j)\in F} \rcArc{i}{j} 
              - \mathbf{1}_{u = 0}\,\dMaxVeh
            + \mathbf{1}_{u \in \TDVis}\, \dFlow{u}    \\
         & - (\durF{F} + \beta_{u} - \alpha_{v})\,\dTimeMTZ{uv} + \esF{v}{F}\,\dTimeLB{v}  - \lsF{u}{F}\,\dTimeUB{u} 
            - (\VehCap - q_{u} + \demF{F})\,\dLoadMTZ{uv}
            + \demF{F}\,\dLoadLB{v} + \demF{F}\,\dLoadUB{u} 
        \Bigg\}
    \end{array}
\end{equation}

where $\rcArc{i}{j} = c_{ij}  - \mathbf{1}_{i \in V} \dAss{i} - \mathbf{1}_{j\in \TDVis} \dFlow{j}$ denotes the reduced cost of arc $(i,j)\in F$, while $\dMaxVeh,\, \{\dAss{v}\}_{v \in V},\, \{\dFlow{u}\}_{v \in \TDVis},\, \{\dTimeMTZ{uv}\}_{u,v \in \TDVis},\, \{ \dTimeLB{v} \}_{v \in \TDVis},\, \{\dTimeUB{v}\}_{v \in \TDVis},\, \{\dLoadMTZ{uv}\}_{u,v \in \TDVis},\, \{\dLoadLB{u}\}_{u \in \TDVis},\, \{\dLoadUB{u}\}_{u \in \TDVis}$ are the dual variables of constraints \eqref{eq:ff:vehicle:limit}, \eqref{eq:ff:ass}, \eqref{eq:ff:flow}, \eqref{eq:ff:time:mtz}, \eqref{eq:ff:time:lb}, \eqref{eq:ff:time:ub}, \eqref{eq:ff:load:mtz}, \eqref{eq:ff:load:lb}, and \eqref{eq:ff:load:ub}, respectively. For notational convenience we set $\dTimeMTZ{0v}=\dTimeMTZ{v0}=\dTimeLB{0}=\dTimeUB{0}=\dLoadMTZ{0v}=\dLoadMTZ{v0}=\dLoadLB{0}=\dLoadUB{0}=0$, $\forall v \in \TDVis$. 
The completion cost of fragment $F$ ending at visit $v\in \TDVis$ is defined as 
\begin{equation*}
 \rcComplFrag{F}= - (\durF{F} + \beta_{u} - \alpha_{v})\,\dTimeMTZ{uv} + \esF{v}{F}\,\dTimeLB{v}  - \lsF{u}{F}\,\dTimeUB{u}             - (\VehCap - q_{u} + \demF{F})\,\dLoadMTZ{uv}
            + \demF{F}\,\dLoadLB{v} + \demF{F}\,\dLoadUB{u}.  
\end{equation*}

The completion cost $\rcComplFrag{F}$ is non-decreasing in $\durF{F}, \, \esF{v}{F}$, and $\demF{F}$, and non-increasing in $\lsF{u}{F}$ since $\dTimeMTZ{uv} \leq 0 , \, \dTimeLB{v} \geq 0, \, \dTimeUB{u} \geq 0, \, \dLoadMTZ{uv} \leq 0,  \, \dLoadLB{u} \geq 0$, and $\dLoadUB{u} \geq 0$ for all $u,v \in \TDVis \cup \{0\}$.

\subsubsection{Dynamic Programming}\label{sss:solappr:lb:dynamic:program}
In the following, we describe our dynamic programming based algorithm to solve the pricing problem~\eqref{eq:pricing_problem}. 
\paragraph{Label definition}
Let $f = (\flStart{f}=v_1, \dots, v_{\ell} = \flEnd{f})$ be a forward path that starts from $\flStart{f}$ no later than $\flLS{f}$, visits the set of tasks $\flSet{f} = \{v_1, \dots, v_{\ell}\}$, ends at $\flEnd{f}$ not earlier than $\flES{f}$, and has a duration of $\flDur{f}$. 
A forward path $f$ is a fragment if it satisfies the conditions given in \cref{def:frag}. 

In our dynamic programming procedure, we represent a forward path $f$ with label $L^{f} = (\flStart{f}, \flEnd{f},\flSet{f}, \flLoad{f}, \flDur{f}, \flES{f},\flLS{f},\flCost{f})$, where $\flCost{f}$ represents the reduced cost of $f$ as defined in \eqref{eq:pricing_problem}. Notice that if $f$ is not a fragment, i.e., $\flEnd{f} \notin V_D \cup \{0\}$, then $\flCost{f}$ does not include the terms associated with $\flDur{f}$, $\flES{f}$, $\flLS{f}$, and $\flLoad{f}$, since they can be exactly calculated only when the fragment is completed. 
A label $L^{f}$ is feasible if the following conditions are satisfied:

\[
\left\{
\begin{array}{l}
\flStart{f} \in  \TDVis \cup \{ 0 \} , \flEnd{f} \in N \\
\flSet{f} \subseteq N, \\
\flLoad{f} + q_{\flEnd{f}} \leq \VehCap, \\
\flLS{f} \in [\alpha_{\flStart{f}}, \beta_{\flStart{f}}], \\
\flES{f} \in [\alpha_{\flEnd{f}}, \beta_{\flEnd{f}}], \\
\flLS{f} + \flDur{f} \leq \beta_{\flEnd{f}}, \\
\flES{f} - \flDur{f} \geq \alpha_{\flStart{f}}, \\
\flDur{f} \in [\dmin{\flStart{f} \flEnd{f}}, \dmax{\flStart{f} \flEnd{f}}], \text{if } \flStart{f}, \flEnd{f} \in D.
\end{array}
\right.
\]
A feasible label $L^{f}$ represents an \emph{incomplete fragment} if $|\flSet{f}| = 1$, or if $|\flSet{f}| \geq 2$ and $\flEnd{f} \notin V_D \cup \{0\}$, and a \emph{fragment} otherwise.
 Fragments correspond to feasible solutions of the subproblem \eqref{eq:pricing_problem} and therefore labels representing them cannot be extended further. 

\paragraph{Label Initialization}
The labeling algorithm starts with a label $L^{f}$ for each task $v \in \TDVis \cup \{0\}$, where $f = (v)$ is a forward path visiting only $v$. 
The values of the label are initialized 
as $\flStart{f}=v, \flEnd{f}= v, \flSet{f}= \{v\}, \flLoad{f} =0, \flDur{f}=0, \flES{f}= \alpha_v, \flLS{f} = \beta_v$, and $\flCost{f} = - \mathbf{1}_{u = 0}\,\dMaxVeh + \mathbf{1}_{u \in \TDVis} \dFlow{u}$. 

\paragraph{Label extension}
A label $L^{f}$ representing an incomplete fragment can be extended by adding a new task $u \in (N \setminus \flSet{f}) \cup \{0\}$ to obtain a new, possibly incomplete fragment $g = (\flStart{f}, \dots, \flEnd{f}, u)$. 
  In line with \cref{prop:frag:derive:sched:cond}, the values of the new label are updated as follows:  
\begin{equation*}
\left\{
\begin{array}{l}
\flStart{g}= \flStart{f} \\ 
\flEnd{g} = u \\ 
\flSet{g} = \flSet{f} \cup \{u\} \\
\flLoad{g} = \flLoad{f} + q_{\flEnd{f}} \\ 
\flDur{g} = \max\{ \flDur{f} + d_{\flEnd{f}} + t_{\flEnd{f}u}, \alpha_{u} - \flLS{f}, \dmin{\flStart{f}u} \} \\  
\flES{g} = \max\{\flES{f} + d_{\flEnd{f}} + t_{\flEnd{f}u}, \alpha_u, \alpha_{\flStart{f}} + \dmin{\flStart{f}u} \} \\ 
\flLS{g} = \min\{ \flLS{f}, \beta_{u} - t_{\flEnd{f}u} - d_{\flEnd{f}} - \flDur{g},\beta_{u} - \dmin{\flStart{f}u}\} \\  
\flCost{g} = \flCost{f} + \rcArc{\flEnd{f}}{u} + \mathbf{1}_{u \in \TDVis\cup \{0\}} \rcComplFrag{g}
\end{array}
\right.
\end{equation*}
where $\dmin{\flStart{f}u} = 0$ if $\{\flStart{f},u\} \notin D$ and the completion cost $\rcComplFrag{g}$ are only considered when $u$ is a task with temporal dependency or the depot which completes the fragment, i.e., if $u\in V_D\cup \{0\}$.

\paragraph{Dominance Rule}
Given two labels $L^{f}$ and $L^{g}$, $f$ is said to dominate $g$ if  $\flStart{f} = \flStart{g}$, $\flEnd{f}=\flEnd{g}$, $\flSet{f} \subseteq \flSet{g}$ (and therefore $\flLoad{f} \leq \flLoad{g}$), $\flES{f} \leq \flES{g}$, $\flLS{f} \geq \flLS{g}$, $\flDur{f} \leq \flDur{g}$, and $\flCost{f} \leq \flCost{g}$. In this case, label $L^{g}$ can be discarded since all the feasible extensions of $g$ can be replaced by extensions of $f$ with no higher cost.
Since the label extension rules are monotone in all resources, the dominance conditions ensure that every feasible extension of $g$ is also a feasible extension of $f$ with resource values that are at least as favorable. Combined with the monotonicity of the completion costs and the condition $\flCost{f} \leq \flCost{g}$, this guarantees that $\flCost{f \oplus \hat{f}} \leq \flCost{g \oplus \hat{f}}$ for every feasible extension $\hat{f}$ of $g$, where  $\oplus$ denotes the  concatenation of two forwards paths.

\subsubsection{Acceleration Strategies}\label{sss:solappr:lb:acce}
The labeling algorithm is further accelerated by the following strategies.
\paragraph{$ng-$path relaxation} Instead of generating elementary fragments, we apply $ng-$path relaxation \citep{Baldacci2011} to achieve a tradeoff between bound quality and complexity of the pricing problem. Neighborhoods do not include any task with temporal dependencies as a cycle between those tasks cannot be part of any feasible fragment.

\paragraph{Improved Dominance Between Incomplete Fragments}
The dynamic programming algorithm generates many labels representing incomplete fragments. According to the updating and dominance rule described above for incomplete fragments $f$, the reduced costs associated with the scheduling possibilities and collected demand are not included in $\flCost{f}$ since they can be exactly calculated only when the fragment is completed.
 
Therefore, two incomplete fragments $f$ and $g$ may satisfy all dominance criteria in favor of $f$ except $\flCost{f} > \flCost{g}$. This prevents $f$ from dominating $g$, even when all completions of $f$ result in lower total reduced cost than those of $g$. In other words, the algorithm misses some dominance opportunities due to incomplete reduced cost information.

To address this issue and improve dominance detection, we use lower bounds on completion costs to improve dominance detection between incomplete fragments.
Specifically, for two incomplete fragments $f$ and $g$ where all dominance criteria except the reduced cost are satisfied in favor of $f$ (i.e., $\flCost{f} > \flCost{g}$), we can still dominate $g$ by $f$ if 
\begin{equation}\label{eq:cond:lb:compl:dom}
    \flCost{f} \leq \flCost{g} + \rcBoundDom{f}{g},
\end{equation}
where $\rcBoundDom{f}{g}$ underestimates the completion cost difference for any feasible completions of $g$ and $f$, i.e.,
\begin{equation}\label{eq:cond:lb:compl:cost}
    \rcBoundDom{f}{g} \leq \min_{\hat{f} : g \oplus \hat{f} \in \AllFrag} \rcComplFrag{g \oplus \hat{f}} - \rcComplFrag{f \oplus \hat{f}}.
\end{equation}

\begin{restatable}{theorem}{dpdominanceboundcomplcost}\label{thm:dp:dominance:bound:compl:cost}

Consider two forward paths $f$ and $g$ that satisfy all dominance criteria in favor of $f$ except that $\flCost{f} > \flCost{g}$, and the set $V_{\dmin{}}^{g}$ of tasks with temporal dependencies to starting task $\flStart{g}$ of $g$ to which $g$ can be extended, i.e., $V_{\dmin{}}^{g} = \{ u : \{\flStart{g},u\} \in D \land  \flES{g} + d_{\flEnd{g}} + t_{\flEnd{g}u} \leq \beta_u \land \alpha_{\flStart{g}} + \dmin{\flStart{g}u} \leq \beta_u  \land \max\{ \flDur{g} + d_{\flEnd{g}} + t_{\flEnd{g}u}, \alpha_{u} - \flLS{g}\} \leq \dmax{\flStart{g}u} \land \flLoad{g} + q_{u} \leq \VehCap\}$. A fragment resulting from extending $g$ to $u \in V_{\dmin{}}^{g}$ cannot start later than $\min\{\beta_{\flStart{g}}, \beta_{u} - \dmin{\flStart{g}u}\}$ and, consequently, $\flLSDMin{g} = \min\{\beta_{\flStart{g}}, \min_{u \in V_{\dmin{}}^{g}}\{\beta_{u} - \dmin{\flStart{g}u}\}\}$ is the smallest latest starting time that can be implied be a temporal dependency. 
Then, \begin{equation}\label{eq:def:lb:compl:cost}
 \rcBoundDom{f}{g} = 
   \left( \min\{ \flLS{f}, (\flLS{g} + \flDur{g}) - \flDur{f}, \max \{ \flLS{g},\flLSDMin{g} \} \} - \flLS{g} \right) \dTimeUB{\flStart{f}} + (\flLoad{g} - \flLoad{f})\dLoadUB{\flStart{f}},
\end{equation}
is a valid lower bound on the difference in completion cost between $f$ and $g$, i.e., satisfying condition \eqref{eq:cond:lb:compl:cost}, and can be used in dominance criterion \eqref{eq:cond:lb:compl:dom}. 

\end{restatable}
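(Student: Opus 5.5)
We fix an arbitrary completion $\hat{f}$ with $g\oplus\hat{f}\in\AllFrag$, ending at some $v\in\TDVis\cup\{0\}$. Since $f$ satisfies all resource dominance criteria, $f\oplus\hat{f}$ is also a feasible fragment. We then compare the completion costs $\rcComplFrag{g\oplus\hat f}$ and $\rcComplFrag{f\oplus\hat f}$ term by term, using the expression for $\rcComplFrag{\cdot}$ from \cref{sss:solappr:lb:subproblem}. Both fragments share start $u=\flStart{f}=\flStart{g}$ and end $v$. The difference therefore splits into four groups:
\begin{itemize}
\item the duration term with coefficient $-\dTimeMTZ{uv}\ge 0$;
\item the earliest-completion term with coefficient $\dTimeLB{v}\ge 0$;
\item the latest-start term with coefficient $-\dTimeUB{u}\le 0$;
\item the load terms with coefficients $-\dLoadMTZ{uv},\dLoadLB{v},\dLoadUB{u}\ge 0$.
\end{itemize}

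First, we use the monotonicity of the extension rules (the recursions of \cref{prop:frag:derive:sched:cond}) in the resources. Because $\flDur{f}\le\flDur{g}$, $\flES{f}\le\flES{g}$, $\flLS{f}\ge\flLS{g}$ and $\flLoad{f}\le\flLoad{g}$, we get $\durF{f\oplus\hat f}\le\durF{g\oplus\hat f}$, $\esF{v}{f\oplus\hat f}\le \esF{v}{g\oplus\hat f}$ and $\demF{f\oplus\hat f}\le\demF{g\oplus\hat f}$. The load difference is exactly $\flLoad{g}-\flLoad{f}$, since the same tasks are appended. Hence the duration, earliest-completion, and $\dTimeMTZ{}$/$\dLoadMTZ{}$/$\dLoadLB{}$ contributions to the difference are nonnegative and can be dropped from a lower bound. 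The $\dLoadUB{u}$ term contributes exactly $(\flLoad{g}-\flLoad{f})\dLoadUB{u}$, which gives the second summand of \eqref{eq:def:lb:compl:cost}.

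The main work is the latest-start term. Its contribution equals $\bigl(\lsF{u}{f\oplus\hat f}-\lsF{u}{g\oplus\hat f}\bigr)\dTimeUB{u}$, and since $\dTimeUB{u}\ge 0$ we need a lower bound on $\lsF{u}{f\oplus\hat f}-\lsF{u}{g\oplus\hat f}$ over all completions. We would unroll the $\lsF{}{}$-recursion, including the final $\dmin{}$ update. This gives
\[
\lsF{u}{h\oplus\hat f}=\min\Bigl\{\flLS{h},\ \flLS{h}+\flDur{h}-\flDur{h}\ \text{shifted by the forward requirements } R(\hat f),\ \beta_v-\dmin{uv}\Bigr\},
\]
more precisely $\min\{\flLS{h},\,B(\hat f)-\flDur{h},\,C(\hat f)\}$ for $h\in\{f,g\}$. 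Here $B(\hat f)$ and $C(\hat f)$ depend only on $\hat f$ (on the $\beta$'s and travel times of the completion, and on $\beta_v-\dmin{uv}$). A small subtlety is that a max with $\alpha_w-\flLS{h}$ enters the duration. We handle it by noting that the recursion keeps the invariant ``$\text{lat}+\text{dur}$ is a min of constants'', so $B$ is indeed independent of $h$.

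Set $x=\lsF{u}{g\oplus\hat f}=\min\{\flLS{g},B-\flDur{g},C\}$. We then distinguish which term attains the minimum for $g$.
\begin{itemize}
\item If $x=\flLS{g}$, the difference is $\ge 0$ trivially. This case is covered because the bound in \eqref{eq:def:lb:compl:cost} is at most $0$ when multiplied appropriately. We must check that the first factor is $\le 0$ or otherwise argue directly, and the cap by $\max\{\flLS{g},\flLSDMin{g}\}$ is what makes this work.
\item If $x=B-\flDur{g}<\flLS{g}$, we use $B\ge\flLS{g}+\flDur{g}$ failing, and the $f$-value is at least $\min\{\flLS{f},\,\flLS{g}+\flDur{g}-\flDur{f},\,C\}$ relative to $x$.
\item If the $\dmin{}$ cap $C$ is active, then $v\in V_{\dmin{}}^{g}$, so $C=\beta_v-\dmin{uv}\ge\flLSDMin{g}$, again restricted by $\beta_u$.
\end{itemize}
Combining the cases yields
\[
\lsF{u}{f\oplus\hat f}\ge\min\{\flLS{f},\ \flLS{g}+\flDur{g}-\flDur{f},\ \max\{\flLS{g},\flLSDMin{g}\}\}
\]
while $\lsF{u}{g\oplus\hat f}\le\flLS{g}$. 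In the worst case the difference of latest starts is bounded below by the first summand's factor, which proves \eqref{eq:cond:lb:compl:cost}.

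We expect this case analysis to be the main obstacle. In particular, the sign bookkeeping in the first case and the use of $V_{\dmin{}}^{g}$ need care: the definition of that set exactly encodes the feasibility conditions under which the $\dmin{}$ update can fire for a feasible completion.
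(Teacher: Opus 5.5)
Your reduction to the latest-start term is sound. Your closed form $\lsF{u}{h\oplus\hat f}=\min\{\flLS{h},B(\hat f)-\flDur{h},C(\hat f)\}$ is also correct, though not for the reason you give. Latest start plus duration is not a min of constants; it evolves as $\min\{\beta_w,\max\{\cdot+\tau,\alpha_w\}\}$. The clean argument is that starting $h$ at $s\le\flLS{h}$ makes the earliest start at $\flEnd{h}$ equal to $\max\{s+\flDur{h},\flES{h}\}$.

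The gap is in how you turn this into a bound on the difference. Keep your $x=\lsF{u}{g\oplus\hat f}$, and write $y=\lsF{u}{f\oplus\hat f}$ and $M=\min\{\flLS{f},\flLS{g}+\flDur{g}-\flDur{f},\max\{\flLS{g},\flLSDMin{g}\}\}$.
\begin{itemize}
\item In your first case you treat the first factor of $\rcBoundDom{f}{g}$ as nonpositive. In fact $M-\flLS{g}\ge 0$ by the dominance criteria, so ``the difference is $\ge 0$'' settles nothing there.
\item Your concluding step, $y\ge M$ together with $x\le\flLS{g}$, is false. Take $\flLS{f}=\flLS{g}=10$, $\flDur{f}=\flDur{g}=5$, and a completion with $B(\hat f)=8$ and $C=+\infty$. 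Both latest starts equal $3$, while $M=10$.
\end{itemize}
A binding completion cuts both latest starts through the same term. So $x$ and $y$ cannot be bounded separately; only their difference is controlled.

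The missing step is to case on which term attains the minimum for $f\oplus\hat f$, and compare it with the same term for $g\oplus\hat f$. This is exactly the paper's proof.
\begin{itemize}
\item If $y=\flLS{f}$, then $x\le\flLS{g}$ gives $y-x\ge\flLS{f}-\flLS{g}$.
\item If $y=B-\flDur{f}$, then $x\le B-\flDur{g}$ gives $y-x\ge\flDur{g}-\flDur{f}$.
\item If $y=C=\beta_v-\dmin{uv}$, then $x\le\min\{\flLS{g},C\}$ gives $y-x\ge\max\{\flLS{g},C\}-\flLS{g}$. This is at least $\max\{\flLS{g},\flLSDMin{g}\}-\flLS{g}$ because $v\in V^{g}_{\dmin{}}$, which follows from feasibility of $g\oplus\hat f$ and the triangle inequality.
\end{itemize}
Each of these three lower bounds is at least $M-\flLS{g}$, which is what is needed. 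Casing on $x$, as you started, can also be made to work, but only if every case bounds $y-x$ term by term rather than $y$ alone.
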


\subsubsection{Row generation}\label{sss:solappr:lb:vi}
The row generation procedure separates and adds violated FSECs~\eqref{eq:FSEC}, TIFIs~\eqref{eq:InfPathV}, TDIFIs~\eqref{eq:InfPathTD}, and RCCs~\eqref{eq:RCC} at the end of each iteration of the column generation algorithm. 
Afterwards, the column generation procedure is repeated in case at least one violated inequality is added. 

In the following, we describe the separation procedures for these VIs whose dual values are considered in the labeling algorithm by adjusting arc costs and completion costs accordingly. 
These adjustments as well as an acceleration technique that accounts for them are detailed in \cref{app:lab:alg:vi}. 
\paragraph{FSECs~\eqref{eq:FSEC}}
are separated using two heuristic separation procedures since exact separation is NP-Hard (see \cref{sec:fsec}). 
Both procedures use 
the support graph $G[\xsol]= (\{0\} \cup \TDVis,A[\xsol])$ implied from the current solution $\xsol$, i.e., $A[\xsol] = \{(u,v) : u,v \in \{0\} \cup \TDVis \land \exists F \in \CGfragFromTo{u}{v} : \xsol_{F} > 0\}$ with arc weights $w_{uv} = \sum_{F \in \CGfragFromTo{u}{v}} \xsol_{F}$, $\forall (u,v) \in A[\xsol]$.
The first procedure identifies subtours in $G[\xsol]$ that do not correspond to feasible routes by computing a maximum flow from $0$ to each $v\in \TDVis$. If an infeasible subtour for set $S\subseteq \TDVis$ is identified, the minimum number of vehicles $\FSECVehMin{S}$ required to serve $S$ is computed by solving an adjusted version of the MILP formulation introduced in \cref{app:af}, only considering tasks $S \cup \{0\}$ and adjusting the objective to minimize the number of used vehicles, and the associated cut is added.
The second procedure enumerates all sets $S \subseteq \TDVis$ of cardinality at most $k_\mathrm{max}$. If $\sum_{(u,v) \in A[\xsol]: u,v \in S} w_{uv} > 1$, $\FSECVehMin{S}$ is computed by increasing the number of vehicles from $1$ to $|S|$ until a feasible solution is found. 
The feasibility of a specific number of vehicles is checked by enumerating all corresponding route possibilities. The feasibility of a route combination is verified by first checking the individual routes and subsequently enforcing the temporal dependency constraints in an iterative manner (see \cref{app:sep:FSEC:alg}).
\paragraph{TIFIs~\eqref{eq:InfPathV}}
are separated by simply checking for violations at each time point corresponding to an earliest completion time of fragments used in the current solution, cf.\ \cref{thm:InfPathV:num} which shows that these TIFIs dominate all others.
At most one TIFI is added for each task, thereby choosing one with maximal violation in case multiple TIFIs are violated for the same task.

\paragraph{TDIFIs~\eqref{eq:InfPathTD}} are also separated by inspection. 
For every temporal dependency $\{u,v\} \in D$, the separation routine checks for violations at the earliest completion times and latest starting times of the fragments in the current solution, in line with \cref{thm:InfPathTD:num}. If multiple violations occur for a single temporal dependency pair, one maximally violated inequality among inequalities \eqref{eq:InfPathTD:uv:min}-\eqref{eq:InfPathTD:vu:max} is added. 

\paragraph{RCCs \eqref{eq:RCC}} are separated and added in undirected form using the procedures from \cite{Lysgaard2004}.

\subsection{Branch-and-cut}\label{ss:solappr:MIP}
We use branch-and-cut to obtain an initial upper bound by solving $\FFVIm[\CGfrag]$ in Step~\ref{sol:proc:step:init:ub} and to obtain an improved, possibly optimal solution by solving $\FFVIm[\ENfrag\cup \IncSol]$ in Step~\ref{sol:proc:step:MIP}. In both of these steps, TIFIs~\eqref{eq:InfPathV}, TDIFIs~\eqref{eq:InfPathTD}, and RCCs~\eqref{eq:RCC} are separated at fractional solutions in this order and using the separation routines described in \cref{sss:solappr:lb:vi}. Thereby, an inequality is only added if its violation exceeds a given threshold and if the pre-defined maximum number of inequalities of its type is not yet reached. Furthermore, in each cutting-plane iteration inequalities of a particular type are only separated if no violated inequalities of earlier considered types have been added. 
Notice that FSECs~\eqref{eq:FSEC} are not considered since separating them consumes too much computation time and that RCCs~\eqref{eq:RCC} are added instead of FRCCs~\eqref{eq:FRCC} since considering the latter in this phase did not pay off in preliminary experiments, likely due to the fact that they lead to dense cuts involving a large number of variables for larger subsets of tasks. 

In addition to the different sets of fragments considered, Steps~\ref{sol:proc:step:init:ub} and \ref{sol:proc:step:MIP} differ in the following two aspects: 
\begin{inparaenum}[(i)]
    \item a time limit $\CGtimeMIP$ is applied in Step~\ref{sol:proc:step:init:ub} to avoid spending too much time on finding an initial upper bound, and
    \item all RCCs~\eqref{eq:RCC} added during row generation in Step~\ref{sol:proc:step:lb} are replaced by their stronger representation as FRCCs~\eqref{eq:FRCC} before starting the branch-and-cut in Step~\ref{sol:proc:step:init:ub}.
\end{inparaenum}
The last step aims to improve the performance of the branch-and-cut in both Steps~\ref{sol:proc:step:init:ub} and \ref{sol:proc:step:MIP} by strengthening the initial lower bound.

\subsection{Fragment Enumeration}\label{ss:solappr:frag:enum}

In this step, we enumerate the set of fragments $\ENfrag$ whose reduced costs do not exceed the difference between the current guess for the optimal solution value $\UBguess$ and the lower bound $\VAL[\relFFVI[\CGfrag]]$ identified via column-and-row generation in Step \ref{sol:proc:step:lb}. It is well known, that set $\ENfrag$ contains all fragments included in solutions whose objective value is within $[\VAL[\relFFVI[\CGfrag]],\UBguess]$. 

To enumerate set $\ENfrag$, all fragments that correspond to solutions to the pricing problem~\eqref{eq:pricing_problem} whose reduced costs w.r.t.\ the dual values obtained in the last pricing iteration do not exceed  $\UBguess-\VAL[\relFFVI[\CGfrag]]$ need to be identified. To ensure enumerating all relevant columns, the dominance rules described in \cref{ss:solappr:init:lb} cannot be used. In the enumeration step, a fragment $F$ only dominates a fragment $F'$ if $F'$ can be replaced by $F$ in every feasible solution without increasing the solution cost.
Thus, the dynamic program as described in \cref{sss:solappr:lb:dynamic:program} can be applied to solve the enumeration problem with two modifications: 
\begin{inparaenum}[(i)]
    \item the dominance rules compare total travel cost of forward paths instead of their reduced cost and
    \item the $ng$-path relaxation is not used to ensure elementarity.    
\end{inparaenum}
The enumeration step applies a backward version of the dynamic programming algorithm described in \cref{ss:solappr:init:lb} which is accelerated through the use of completion bounds \citep{Baldacci_Mingozzi_Roberti_2012} obtained during the last execution of the forward labeling algorithm. 
The completion bounds, thereby, account for the the minimal costs that are associated to $\esF{\cdot}{F}, \; \lsF{\cdot}{F}$, and $\demF{F}$ when merging two labels. A second acceleration strategy is that dominance is established by identifying beneficial 2-opt and 3-opt moves instead of pairwise label comparisons. To avoid excessive memory usage, the procedure and the overall solution algorithm stops if more than $\maxfragenum$ relevant fragments are identified.

After the enumeration, the following fragment elimination procedure is used to reduce the number of considered fragments. It is based on the observation that a fragment can only contribute to a solution with an objective value in $[\VAL[\relFFVI[\CGfrag]], \UBguess]$ if it is contained within at least one route whose reduced cost does not exceed $\UBguess - \VAL[\relFFVI[\CGfrag]]$. Following the idea of formulation leveraging by \cite{Sippel2024} this observation is exploited by computing a lower bound on the minimum reduced costs of any route that contains a given fragment. For each fragment $F=(u, \dots, v)\in \ENfrag$, such a lower bound is obtained by considering its reduced cost plus the minimum sum of reduced costs of any two fragments $F'\in \ENfragTo{u}$ and $F''\in \ENfragFrom{v}$ while considering only those fragments that can be scheduled before and after $F$, respectively, i.e., such that $\esF{u}{F'} \leq \lsF{u}{F}$ and $\lsF{v}{F''} \geq \esF{v}{F}$. 
The procedure removes every fragment from $\ENfrag$ whose bound exceeds $\UBguess - \VAL[\relFFVI[\CGfrag]]$, and is accelerated by considering disjunctive $ng$-neighborhoods at linking tasks.
Finally, $\relFFVIm[\ENfrag \cup \IncSol]$ is solved resulting in new dual values and a solution value $\VAL[\relFFVIm[\ENfrag \cup \IncSol]]$ that is potentially higher than $\VAL[\relFFVI[\CGfrag]]$ due to the replacement of RCC \eqref{eq:RCC} by FRCC \eqref{eq:FRCC}. The reduced costs for each fragment are recalculated and the fragments in $\ENfrag$ for which these updated reduced costs exceed the bound $\UBguess - \VAL[\relFFVIm[\ENfrag \cup \IncSol]]$ are discarded.

\section{Computational study}\label{sec:compres}
This section studies the performance of \solalg and compares it to two benchmark methods. The first such method is a re-implementation of the branch-and-price approach of \citet{Dohn2011} based on time window branching, which has been used in a variety of studies \citep{DOHN20091145,RASMUSSEN2012598,Luo2016,LIN2021102177,Kling2025} since it can handle a broad range of temporal dependency constraints. Since this approach is not applicable to non-overlapping temporal dependencies, we also compare \solalg to solving an arc-based formulation with a general purpose solver. A more detailed description of these two benchmark methods is provided in \cref{app:bench:appr}.

Below, we summarize main parameters used in our experiments before providing details about the used benchmark instances in \cref{ss:bench:inst}. The results of our computational study which focuses on 
\begin{inparaenum}[(i)]
    \item analyzing the impact of different components of \solalg,
    \item comparing its performance against the two benchmark methods, and
    \item assessing its performance on larger instances whose size is prohibiting for applying existing methods
\end{inparaenum}
are discussed in \cref{ss:comp:vi,ss:comp:sol:alg,ss:perf:size:75}.
All algorithms were implemented in the programming language \textit{Julia} and executed using Gurobi 12. The computation study was performed using the \textit{AMD Rome 7H12} nodes of the Dutch National Supercomputer Snellius whereby a time limit of two hours was imposed for solving a single instance. 
Based on the results of preliminary experiments, \solalg has been further configured as follows:
FSECs for each pair of temporal dependency tasks and the set $\TDVis$ as well a constraint enforcing a lower bound on the number of used vehicles are initially included in Step~\ref{sol:proc:step:lb}.
At most $100$ fragments with negative reduced costs are added in each column generation iteration where the size of the $ng$-path neighborhood is set to $10$. The row generation procedure enumerates subsets of size at most $k_\mathrm{max}=5$ when identifying violated FSECs, and adds at most $200$ violated RCCs in total. 
A time limit of $\CGtimeMIP=100$ is applied to the branch-and-cut in Step~\ref{sol:proc:step:init:ub} which replaces only those RCCs added during row generation by their stronger representation as FRCCs that include at most $\lceil 0.25 \cdot |V| \rceil$ tasks and uses $\UBinitstep=0.05$ to calculate the initial guess of the optimal solution value. The fragment limit in the enumeration step is set to 
$\maxfragenum=20\,000\,000$ and the minimum violations for which inequalities are added during the branch-and-cut procedures in Steps~\ref{sol:proc:step:init:ub} and \ref{sol:proc:step:MIP} are set to $0.25$, $0.25$, and $0.05$ for TIFIs, TDIFIs, and RCCs, respectively, with a limit of at most 1\,000 cuts of each type. Finally, $\UBstep=0.05$ in Step~\ref{sol:proc:recursion}.

\subsection{Benchmark instances}\label{ss:bench:inst}
We created a set of benchmark instances based on the 56 instances of \citet{Solomon87} originally proposed for the capacitated VRP with tighter (type 1) and looser time windows and capacity restrictions (type 2). For each original instance with 100 tasks, we first create two base instances by considering the first 50 and 75 tasks only. From each of the resulting $2\cdot 56=112$ base instances, we create $6$ instances each considering one specific type of temporal dependencies following a similar procedure as \citet{Dohn2011} because the original instances could not be retrieved. The considered types are synchronization, minimum precedence, maximum precedence, minimum/maximum precedence, overlap, and non-overlap and we will refer to the instances with these types as syn, min, max, minmax, overlap, and non-overlap, respectively. 
Temporal dependencies are generated by iteratively selecting a random pair of tasks without implicit or explicit temporal dependencies between them and creating a restrictive temporal dependency if possible. Hereby, a restrictive temporal dependency means that it limits the scheduling possibilities 
of at least one of the two tasks compared to the current set of temporal dependencies, without imposing infeasible timing restrictions. The minimum and/or maximum difference for precedence constraints is randomly drawn from the set of values that restrict the possible starting times of a least one of these 
tasks.
The procedure stops when there is no node pair left for which a restrictive temporal dependency can be added. By selecting the first $\lceil \reltempdep \cdot |V|\rceil$ temporal dependencies from each of these instances for $\reltempdep\in \{0.05, 0.15, 0.25\}$, we obtain our final benchmark set consisting of $3 \cdot 56 = 168$ instances per temporal dependency type and thus $6 \cdot 3 \cdot 56=1\,008$ instances in total for each considered number of tasks $|V|\in \{50,75\}$. 
Lastly, since each task has integer duration and time window bounds, travel times are 
rounded up to the nearest integer resulting in integer solution values. 

\subsection{Impact of valid inequalities}\label{ss:comp:vi}
We first assess the contributions of the valid inequalities on the performance of \solalg using the 1\,008 instances with 50 tasks. \cref{tab:conf:key:char} reports key characteristics for seven configurations of \solalg considering different types of valid inequalities.
Next to the baseline configuration \solalg described in \cref{sec:solappr}, we also consider \solalgNoBC which disables cutting planes in the branch-and-cut in Steps~\ref{sol:proc:step:init:ub} and \ref{sol:proc:step:MIP} and variants \solalgNoBCNoCut{\texttt{type}} that additionally exclude either a specific type of inequalities $\texttt{type} \in \{\text{FSEC}, \text{TIFI}, \text{TDIFI}, \text{RCC}\}$ or all of them (\solalgNoBCNoCut{VI}) when computing the initial lower bound in Step~\ref{sol:proc:step:lb}. 
\cref{tab:conf:key:char} reports the numbers of instances solved to optimality (opt), the average and maximum gaps in percentage (gap, gap$_\mathrm{max}$), and the average computation times in seconds (time), where runtimes are set to 7200 seconds if \solalg terminates early because more than $\maxfragenum$ fragments are identified. Moreover, time$^*$ denotes the average computation time on instances solved to optimality by all considered variants, and $\widehat{\text{gap}}$ denotes the average optimality gap on instances where at least one configuration fails to prove optimality.

\begin{table}
\TABLE
{Impact of valid inequalities on the performance of \solalg. \label{tab:conf:key:char}}
{\begin{tabular}{rcccccc}
\toprule
Configuration & opt & time & gap & gap$_\mathrm{max}$ & time$^*$ & $\widehat{\text{gap}}$ \\
\midrule
   \solalg                  & \textbf{837}  & \textbf{1393} & 1.42          & 25.93              & 100           & 6.34 \\ 
  \solalgNoBC               & \textbf{837}  & 1415          & \textbf{1.41} & \textbf{23.24}     & 104           & \textbf{6.31} \\
  \solalgNoBCNoCut{FSEC}    & 820           & 1523          & 1.64          & 29.26             & 146           & 7.34 \\
  \solalgNoBCNoCut{TIFI}    & 810           & 1586          & 1.72          & 25.58             & 171           & 7.71 \\
  \solalgNoBCNoCut{TDIFI}   & 830           & 1443          & 1.49          & 26.20             & 118           & 6.68 \\
  \solalgNoBCNoCut{RCC}     & \textbf{837}  & 1394          & 1.42          & 24.59             & \textbf{89}   & 6.36 \\
  \solalgNoBCNoCut{VI}      & 790           & 1815          & 2.33          & 45.33             & 295           & 10.44 \\
\bottomrule
\end{tabular}}{}
\end{table}

The difference between \solalgNoBC and \solalgNoBCNoCut{VI}, observed from \cref{tab:conf:key:char}, clearly shows the positive impact of using the valid inequalities while identifying the lower bound in Step~\ref{sol:proc:step:lb} on the overall performance. Separating the valid inequalities during row generation increases the number of instances solved to optimality from 790 to 837, reduces the average optimality gap from 2.33\% to 1.42\%, and decreases the average runtime from 1\,815s to 1\,415s. The results for variants excluding individual types of inequalities reveal that FSECs and TIFIs appear to have the largest positive effect, with TIFIs contributing the most. In contrast, the impact of RCCs seems limited, likely because of the fact that the introduced temporal dependencies reduce the restrictiveness of the capacity constraints. Their inclusion slightly reduces the average and maximum optimality gap,
but increases the average solution time. The latter observation also holds on the subset of instances solved to optimality by all configurations.

In contrast to their significant impact during Step~\ref{sol:proc:step:lb}, the impact of separating inequalities during branch-and-cut seems very limited (cmp.\ variants \solalg and \solalgNoBC) and their inclusion does not yield better results. Although the average solution time decreases, the optimality gaps slightly increase.
Further insights presented in \cref{app:ins:FSA} confirm the positive contribution of considering valid inequalities in Step~\ref{sol:proc:step:lb} by showing that each type increases the lower bound and improves the algorithm performance, with FSECs and TIFIs having the strongest effect. 
We note that these results also show the effectiveness of the fragment reduction procedures that remove, on average, approximately 33\% of the fragments.

Overall, \solalgNoBC seems the best-performing variant as it solves the largest number of instances to optimality and achieves the smallest average and maximum optimality gaps. \solalgNoBC will, therefore, be used in the remainder of this computational study. 

\subsection{Comparison to benchmark methods}\label{ss:comp:sol:alg}

This section compares the performances of \solalgNoBC to our re-implementation of the branch-and-price method by \citet{Dohn2011} (\dohnalg) and an arc-based formulation solved by Gurobi (\mtzalg) using the 1\,008 instances with 50 tasks. Thereby, we focus on the impact of the number of temporal dependencies and their type.

\subsubsection{Number of temporal dependencies}
We first analyze the impact of the number of temporal dependencies $\lceil \reltempdep \cdot |V|\rceil$ where $\reltempdep\in \{0.05, 0.15, 0.25\}$. \cref{fig:comp:dohn:gap:time,fig:comp:mtz:gap:time} show corresponding optimality gaps and solution times of \solalgNoBC and \dohnalg as well as \solalgNoBC and \mtzalg, respectively. 
Since \dohnalg is not applicable to non-overlapping temporal dependencies, those instances are excluded in \cref{fig:comp:dohn:gap:time}. 

\begin{figure}
     \FIGURE
     {\includegraphics[width=1.0\textwidth]{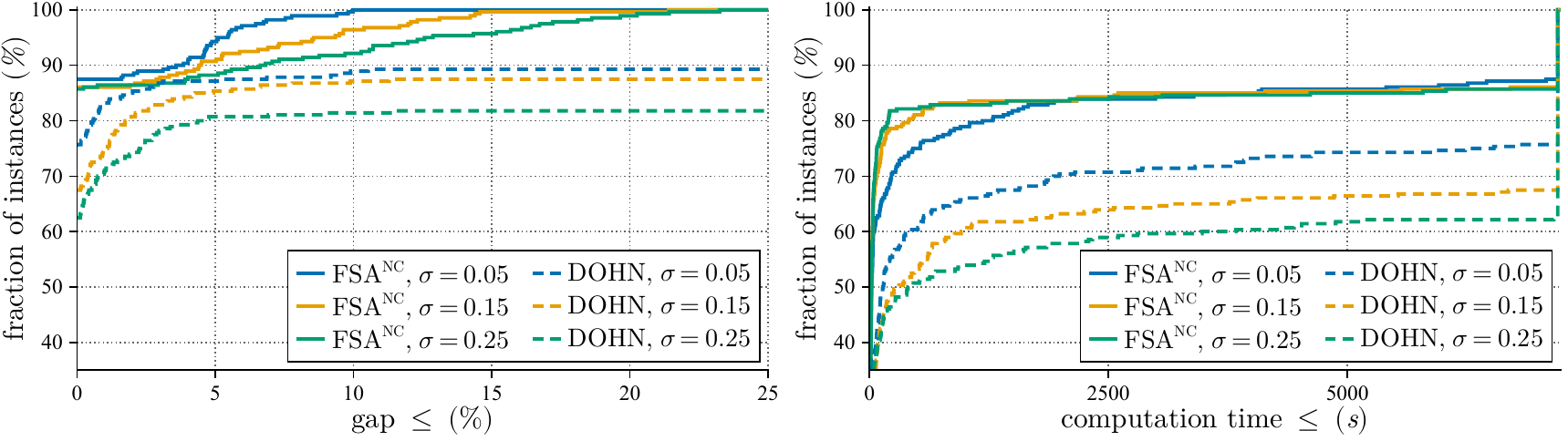}} 
{Optimality gaps and runtimes of \solalgNoBC and \dohnalg on instances with 50 tasks without non-overlapping temporal dependencies for various amounts of temporal dependencies. Optimality gaps are cutoff at 25\%. \label{fig:comp:dohn:gap:time}} 
{}
\end{figure}
\begin{figure}
     \FIGURE
     {\includegraphics[width=1.0\textwidth]{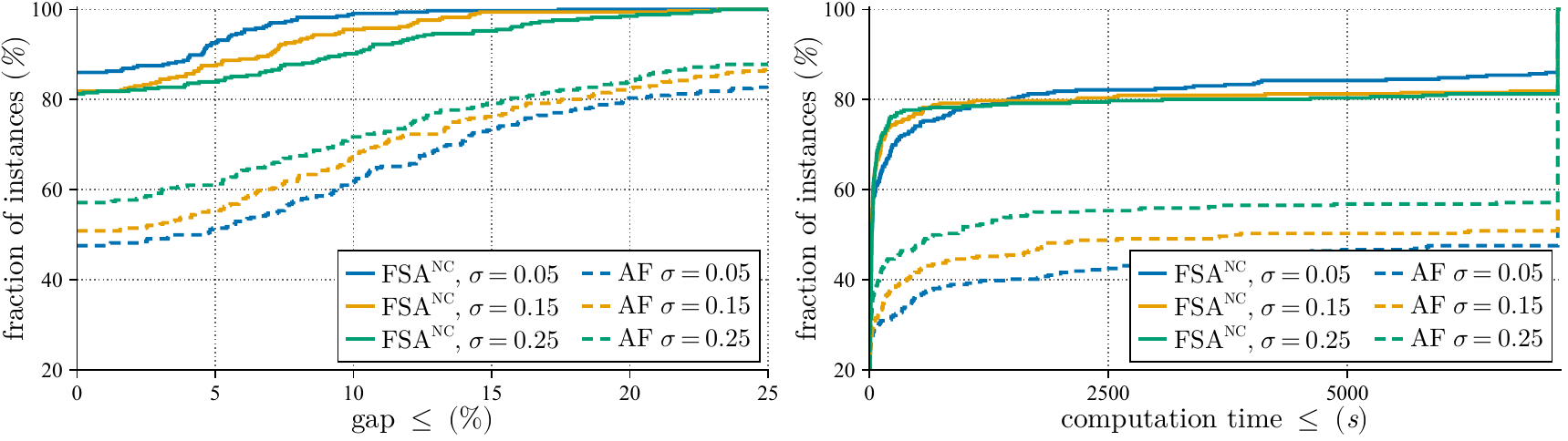}} 
{Optimality gaps and runtimes of \solalgNoBC and \mtzalg on instances with 50 tasks for various amounts of temporal dependencies. Optimality gaps are cutoff at 25\%. 
\label{fig:comp:mtz:gap:time}}
{}
\end{figure}

These results show that \solalgNoBC significantly outperforms \dohnalg and \mtzalg. It solves more instances to optimality, does so in a smaller amount of time, and achieves much lower optimality gaps for instances that could not be solved within the time limit. These observations hold consistently across all three amounts of temporal dependencies. We also observe a significant impact of the number of temporal dependencies on the performances of \dohnalg and \mtzalg. Interestingly, the performance of \dohnalg decreases with an increasing number of temporal dependencies, while the opposite is true for \mtzalg. For \solalgNoBC we observe that the number of solved instances slightly decreases, the optimality gaps on unsolved instances slightly increase, but surprisingly the computation times slightly decrease with an increasing amount of temporal dependencies. Overall, its performance is more robust than the one of the two benchmark methods. This highlights one main benefit of using the proposed fragment based formulation and the price-cut-and-enumerate algorithm naturally balancing the number of fragment variables, time spent in column generation and enumeration, and the strength of the formulation. Instances with many temporal dependencies lead to less effort in the pricing and enumeration step due to fewer and shorter fragments, but weaker bounds while the opposite is true for instances with few temporal dependencies in which case the fragment-based formulation is closer to a route model with more variables but tighter lower bounds.

\subsubsection{Type of temporal dependencies}

We now analyze the performance of the three considered methods for different types of temporal dependencies. 
\cref{tab:key:char:all:meth} reports numbers of instances solved to optimality ($\#_\mathrm{opt}$), average computation times in seconds (time), and average and maximum gaps in percentage (gap, gap$_{\mathrm{max}}$). The first three statistics are also reported on the subset of instances for which each method found a feasible solution, using the notation $\widetilde{\cdot}$, along with the number of instances in this subset ($\widetilde{\#}$). Results on this smaller set of instances are included due to a major limitation of \dohnalg which was not able to find a feasible solution for 116 instances, such that its average and maximum gaps could not be computed. These cases are spread across all types of temporal dependencies with the largest number of 39 cases arising for overlapping constraints. In contrast, \solalgNoBC and \mtzalg obtained valid lower and upper bounds for every instance.
All results are provided for each of the six temporal dependency types considered as well as aggregated over all types (all) and all types except non-overlapping one (all-nonov) that cannot be considered by \dohnalg.

\begin{table}
\TABLE
{Performance of \solalgNoBC, \dohnalg, and \mtzalg on instances with 50 tasks. \label{tab:key:char:all:meth}}
{\begin{tabular}{cccccccccc}
\toprule
Temp. Dep. & Method & \#$_\mathrm{opt}$ &  time & gap & gap$_\mathrm{max}$  & $\widetilde{\vphantom{\text{time}}\#}$ & 
 $\widetilde{\vphantom{\text{time}}\#}$$_\mathrm{opt}$& $\widetilde{\text{time}}$ & $\widetilde{\vphantom{\text{time}}\text{gap}}$   \\ 
 \midrule 
  \multirow{3}{*}{syn} & $\solalgNoBC$ & \textbf{141} &  \textbf{1339} &  \textbf{1.38} &  \textbf{23.24} &  \multirow{3}{*}{141} &  \textbf{131} &  \textbf{693} & 0.44 \\
   & $\dohnalg$ & 120 & 2353 & - & - &  & 120 & 1425 &  \textbf{0.36} \\
   & $\mtzalg$ & 88 & 3681 & 11.34 & 72.33 &  & 85 & 3148 & 8.82 \\
   \cmidrule(lr){1-2} \cmidrule(lr){3-6} \cmidrule(lr){7-10}
  \multirow{3}{*}{min} & $\solalgNoBC$ &  \textbf{158} &  \textbf{718} &  \textbf{0.29} &  \textbf{7.31} & \multirow{3}{*}{162} &  \textbf{155} &  \textbf{595} &  \textbf{0.20} \\
   & $\dohnalg$ & 134 & 1867 & - & - &  & 134 & 1669 & 0.24 \\
   & $\mtzalg$ & 100 & 3200 & 7.03 & 56.37 & & 100 & 3052 & 6.85 \\
 \cmidrule(lr){1-2} \cmidrule(lr){3-6} \cmidrule(lr){7-10}
  \multirow{3}{*}{max} & $\solalgNoBC$ &  \textbf{130} &  \textbf{1863} &  \textbf{2.01} &  \textbf{21.42} & \multirow{3}{*}{144} &  \textbf{123} &  \textbf{1307} & 1.17 \\
   & $\dohnalg$ & 110 & 2779 & - & - &  & 110 & 2042 &  \textbf{0.63} \\
   & $\mtzalg$ & 85 & 3820 & 11.20 & 69.82 &  & 81 & 3455 & 9.04 \\ 
    \cmidrule(lr){1-2} \cmidrule(lr){3-6} \cmidrule(lr){7-10}
  \multirow{3}{*}{minmax} & $\solalgNoBC$ &  \textbf{158} &  \textbf{536} &  \textbf{0.25} & \textbf{6.85} & \multirow{3}{*}{148} &  \textbf{143} &  \textbf{328} &  \textbf{0.14} \\
   & $\dohnalg$ & 112 & 2706 & - & - &  & 112 & 2098 & 0.41 \\
   & $\mtzalg$ & 96 & 3289 & 7.04 & 56.44 &  & 92 & 2953 & 6.37 \\ 
   \cmidrule(lr){1-2} \cmidrule(lr){3-6} \cmidrule(lr){7-10}
  \multirow{3}{*}{overlap} & $\solalgNoBC$ &  \textbf{139} &  \textbf{1404} &  \textbf{1.50} &  \textbf{20.25} & \multirow{3}{*}{129} &  \textbf{119} &  \textbf{659} &  \textbf{0.40} \\
   & $\dohnalg$ & 100 & 3241 & - & - &  & 100 & 2045 &  0.60 \\
   & $\mtzalg$ & 85 & 3761 & 11.44 & 68.20 &  & 77 & 3159 & 8.56 \\ 
 \cmidrule(lr){1-2} \cmidrule(lr){3-6} \cmidrule(lr){7-10}
  \multirow{2}{*}{non-overlap} & $\solalgNoBC$                           & \textbf{111}   & \textbf{2627} & \textbf{3.03}      & \textbf{23.20}                                       & \multirow{2}{*}{168}     & \textbf{111} & \textbf{2627} & \textbf{3.03}   \\
   & $\mtzalg$                                                      & 69    & 4397 & 12.30     & 62.30                                       &      & 69 & 4397 & 12.30  \\ \midrule 
  \multirow{3}{*}{\shortstack{all-nonov}} &  $\solalgNoBC$  & \textbf{726} &  \textbf{1172} &  \textbf{1.09} &  \textbf{23.24} & \multirow{3}{*}{724} &  \textbf{671} &  \textbf{713} & 0.46 \\
   & $\dohnalg$ & 576 & 2589 & - & - &  & 576 & 1850 &  \textbf{0.44} \\
   & $\mtzalg$ & 454 & 3550 & 9.61 & 72.33 &  & 435 & 3150 & 7.87 \\
 \cmidrule(lr){1-2} \cmidrule(lr){3-6} \cmidrule(lr){7-10}
 \multirow{2}{*}{all} & $\solalgNoBC$                               & \textbf{837}   & \textbf{1415} & \textbf{1.41}      & \textbf{23.24}                                       & \multirow{2}{*}{1008}      & \textbf{837} & \textbf{1415} & \textbf{1.41}  \\
   & $\mtzalg$                                                      & 523   & 3691 & 10.06     & 72.33                                       &      & 523 & 3691 & 10.06  \\ 
\bottomrule
\end{tabular}}{}
\end{table}

We first consider the aggregated results on instances with and without non-overlapping constraints (i.e., rows all and all-nonov) which confirm the superior performance of \solalgNoBC over \dohnalg and \mtzalg. On the largest set of instances that can be considered by all three approaches (i.e., the subset of instances excluding non-overlapping constraints) \solalgNoBC solves 726 out of the 840 instances to optimality, while \dohnalg only solves 576 instances and \mtzalg 454 instances. Furthermore, average and maximum optimality gaps of \solalgNoBC are much smaller than those of \mtzalg with values of 1.09\% compared to 9.61\% and 23.24\% compared to 72.33\%, respectively. As noted before, these gaps are not available for \dohnalg which fails to find a valid upper bound in 116 cases. In addition to solving more instances and achieving substantially lower optimality gaps, \solalgNoBC is on average more than two times as fast as \dohnalg and approximately three times as fast as the \mtzalg.
For the subset of instances where \dohnalg does find a feasible solution, \solalgNoBC again solves more instances to optimality than \dohnalg (671 vs.\ 576), and its average computation time is much smaller (713s vs.\ 1\,850s) while their optimality gaps are similar. The latter observation can be attributed to the quality of the linear relaxation bounds of \dohnalg which is based on a route rather than fragment-based formulation. 

The main conclusions drawn so far carry over to each individual type of temporal dependencies. \solalgNoBC consistently outperforms \dohnalg in terms of numbers of instances solved to optimality and with respect to average computing times. Since \mtzalg has the worst performance on all considered metrics and for all temporal dependencies we will mainly focus on the comparison between \solalgNoBC and \dohnalg in the following. We observe that \dohnalg struggles in particular with overlapping temporal dependencies where it can only solve 100 out of the 168 instances and fails to find any solution for 39 instances, while \solalgNoBC solves 139 of these instances and achieves an average gap of 1.5\%. The advantages of \solalgNoBC over \dohnalg are also particularly pronounced for minmax precedence, where \solalgNoBC solves 158 instances to optimality with an average gap of 0.25\%, while \dohnalg only solves 112 instances and fails to find a feasible solution for 20 instances. For these two types of temporal dependencies, the optimality gaps of \solalgNoBC are smaller even when considering only the subset of instances for which \dohnalg finds solutions. The advantages of \solalgNoBC in terms of numbers of solved instances, average runtimes, and consistently finding lower and upper bounds are also present for the remaining types of temporal dependencies, i.e., syn, min, max. No clear picture emerges, however, for these temporal dependencies when comparing the optimality gaps of \solalgNoBC and \dohnalg on the instances for which \dohnalg finds feasible solutions. Indeed, these gaps are slightly in favor of \solalgNoBC for minimum precedence, and (slightly) in favor of \dohnalg for synchronization and maximum precedence.

\smallskip
The results in \cref{tab:key:char:all:meth} also indicate that non-overlapping constraints, which have not been considered in the existing literature, seem the most challenging type. While \dohnalg cannot consider them at all, \solalgNoBC and \mtzalg solve fewer instances than for other types of temporal dependencies, require more time, and terminate with larger optimality gaps. Nevertheless, \solalgNoBC still solves the majority of the 168 instances and achieves an average solution gap of 3.03\% being clearly superior to the only competitor \mtzalg which solves only 69 instances and whose average optimality gaps are 9.61\%. 
Overall, the results in this section demonstrate that \solalgNoBC clearly outperforms both considered alternatives (\dohnalg and \mtzalg) on the considered set of benchmark instances independent of the type of temporal dependencies considered and their number.  
%

\subsection{Performance on large instances} \label{ss:perf:size:75} 
In this section, we discuss the performance of \solalgNoBC on instances with $75$ tasks for which the benchmark methods fail to produce meaningful results. For these experiments, the size of the $ng$-path neighborhood was reduced to 8. 
\cref{fig:ff:75:gap:time} shows optimality gaps and runtimes of \solalgNoBC for different amounts of temporal dependencies, while \cref{tab:meth:FF:key:char:75} summarizes its performance for different types of temporal dependencies.  

\begin{figure}
     \FIGURE
     {\includegraphics[width=1.0\textwidth]{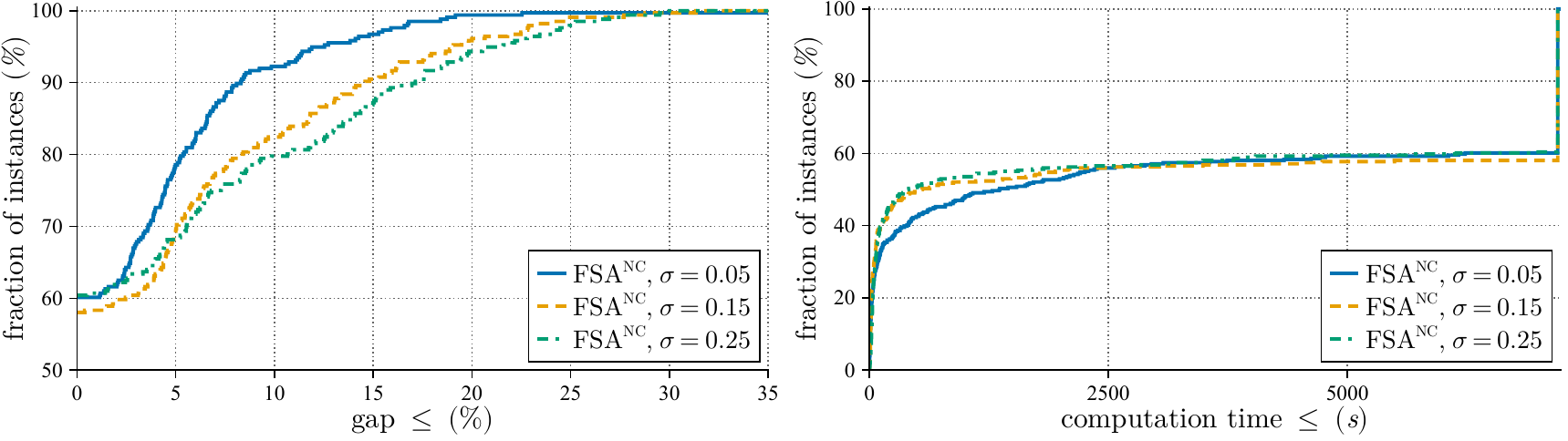}} 
{Optimality gaps and runtimes of \solalgNoBC on instances with 75 tasks for various amounts of temporal dependencies. 
\label{fig:ff:75:gap:time}}
{}
\end{figure}
\begin{table}
\TABLE
{Performance of \solalgNoBC on instances with 75 tasks. 
  \label{tab:meth:FF:key:char:75}}
{\begin{tabular}{ccccc}
\toprule
Temp.\ Dep. &  \#$_\mathrm{opt}$ & time &  gap & gap$_{\mathrm{max}}$    \\    
\midrule 
  syn &   113 & 2614 & 3.53 & 24.07          \\ 
  min &   113 & 2661 & 1.67 & 12.10             \\
  max &  91 & 3533 & 5.05 & 29.84            \\ 
minmax &  111 & 2829 & 1.84 & 17.70             \\ 
  overlap &  101 & 3122 & 4.25 & 29.5           \\ 
  non-overlap &   71 & 4430 & 6.79 & 31.34      \\ 
       \cmidrule(lr){1-1} \cmidrule(lr){2-5}   
    all &     600 & 3198 & 3.86 & 31.34          \\ 
\bottomrule
\end{tabular}}{}
\end{table}

Consistent with the results for smaller instances, a larger number of temporal dependencies results in larger optimality gaps while the impact on runtimes (which slightly decrease) is limited. While the performance of \solalgNoBC naturally decreases compared to smaller instances, it can still solve 600 out of the 1\,008 instances, more than the number of smaller instances solved by any of the two benchmark methods. From \cref{fig:ff:75:gap:time} we also observe that instances that can be solved optimally are typically solved relatively quickly. 
The results in \cref{tab:meth:FF:key:char:75} also confirm the main conclusions drawn in the previous subsection. Again, non-overlapping constraints where only 71 out of 168 instances could be solved are the most challenging ones for \solalgNoBC, followed by maximum precedence and overlap constraints for which it could solve 91 and 101 instances, respectively. The performance of the remaining types (i.e., synchronization, minimum precendence, minmax precedence) with respect to numbers of solved instances and runtimes is very similar. Interestingly, the average and maximum gaps for synchronization are, nevertheless, significantly larger than for the other two types.
For one instance with minmax precedence, \solalgNoBC did not obtain a lower bound within the time limit, and this instance is therefore excluded from the statistics in \cref{tab:meth:FF:key:char:75}.
Overall, these results show that \solalgNoBC can be used to (approximately) solve larger instances with 75 tasks for which either proven optimal results or high-quality solutions and acceptable optimality gaps are returned in the majority of cases. 
Additional details on the impact of the VI and the characteristics of the enumeration phase are presented in \cref{app:ins:FSA}. They show that the impact of the cuts increase with the number of temporal dependencies, and that the TIFI, TDIFI, and RCC have a larger impact for the set of instances with more restrictive time window and capacity restrictions. They also indicate the effectiveness of the fragments reduction procedure.

\section{Conclusion}\label{sec:Conclusion}
This paper studies a generic vehicle routing problem with temporal dependencies (VRPTD) which allows to consider a wide range of highly relevant types of temporal dependencies between pairs of tasks. These temporal dependencies can specify possible starting orders, minimum and maximum permitted differences in starting times, and whether the execution of tasks must (not) overlap. Generalizing exact algorithms for classical VRPs that rely on route-based formulations to the VRPTD while maintaining their efficiency is far from straightforward. We, therefore, propose to represent vehicle routes as sequences of fragments connecting nodes with temporal dependencies. 
We propose a novel fragment-based formulation that combines individual fragments into routes while ensuring feasibility with respect to time windows, capacity constraints, and temporal dependencies. To strengthen the formulation, we propose four families of valid inequalities: fragment-based subtour elimination constraints, time- and temporal dependency infeasible fragment inequalities, and fragment-based (rounded) capacity constraints. Building on this formulation, we propose an exact price-cut-and-enumerate solution method that benefits from the fact that aforementioned route fragments are specifically designed to achieve a computationally beneficial trade-off between the strength of the formulation and the complexity of the pricing problem. The novel solution method first derives a lower bound through alternating column-and-row generation using a specifically designed labeling algorithm and the separation of violated valid inequalities. It then aims to find a feasible solution and iteratively refines both lower and upper bounds through fragment enumeration and branch-and-cut, using an enumeration algorithm and a fragment reduction procedure introduced in this article.

Our computational study shows that our algorithm outperforms two benchmark methods that are also applicable to a wide range of temporal dependencies. Here, we consider a branch-and-price algorithm by \cite{Dohn2011}, which enforces temporal dependencies through time window branching but cannot deal with non-overlapping constraints and an approach based on solving an arc-based formulation using Gurobi. Our approach solves significantly more benchmark instances in less time and derives smaller optimality gaps for instances that cannot be solved within the given time limit. It can also tackle instances whose size is prohibitive for the two benchmark methods. Furthermore, our results 
confirm the positive impact of the proposed valid inequalities on the performance of our algorithm. 

This paper demonstrates the successful application of fragment-based methods to a highly relevant class of routing problem conceptually different from formulations in other studies.
Our computational results do also indicate several suggestions for further improvements. These include integration of (advanced) primal heuristics, enhanced fragment reduction techniques, or the use of a fragment pool \citep{Yang2023} to reduce the number of enumeration procedures. 
Furthermore, parallelization of the labeling and enumeration algorithm could reduce the computation times. Interesting avenues for further research also include the identification of additional valid inequalities and to derive separation routines for fragment-based subtour elimination constraints and fragment-based (rounded) capacity constraint. 
Finally, studying the use of a fragment-based formulation in settings with uncertain travel and service times or dynamic requests as well as the use of machine learning techniques to identify promising fragments or 
reduce the number of enumerated fragments \citep{SIPPEL2025107123} appear to be promising directions for future research. 


\ACKNOWLEDGMENT{
The publication used the Dutch National Supercomputer Snellius that is (co-)funded by the Dutch Research Council (NWO), hosted by SURF through the call for proposals 'Computing Time on National Computer Facilities'. 
%
It used the Dutch national e-infrastructure with the support of the 
SURF Cooperative using grant no. EINF-15088.
}


%
%
%

\bibliographystyle{informs2014} 
\bibliography{references} 

\begin{APPENDICES}

\section{Notation}\label[appsec]{app:notation}
\cref{tab:notation:gen} provides a summary of the sets and parameters used within the problem definition and to introduce the fragment-based formulation~\eqref{eq:ff}. \cref{tab:notation:alg} summarizes the notation used in the \solalg. 

\begin{table}[H]
\TABLE
{Notation used for the \probabbr and fragment-based formulation~\eqref{eq:ff}. 
  \label{tab:notation:gen}}
{\begin{tabular}{ll}
\toprule
Sets & Description \\ 
\midrule 
	$\Veh$             & Vehicles  \\ 
    $V$                & Tasks  \\ 
    $N$                & Tasks and the depot ($V \cup \{0\}$) \\
    $D$                & Task pairs with temporal dependencies ($D \subseteq V \times V$)  \\
    $\TDVis$            & Tasks with temporal dependencies ($\TDVis \subseteq V$)\\   
    $\AllFrag$         & Fragments   \\ 
    $\FragFromTo{u}{v}$ & Fragments starting at $u \in \TDVis$ and ending at $v \in \TDVis$ ($\FragFromTo{u}{v} \subseteq \AllFrag$)  \\ 
    $\FragFrom{v}$         & Fragments starting at $v \in \TDVis$ ($\FragFrom{v} \subseteq \AllFrag$)   \\ 
    $\FragTo{v}$        & Fragments ending at $v \in \TDVis$ ($\FragTo{v} \subseteq \AllFrag$)   \\
    \midrule 
Parameters & Description \\  
\midrule 
$\VehCap$            & Vehicle capacity  \\ 
$[0,\Tmax]$           & Planning horizon \\
$[\alpha_v,\beta_v]$ & Time window during which task $v \in N$ may start  \\ 
$d_v$  & Duration of task $v \in V$\\ 
$q_v$  & Demand of task $v \in V$ \\ 
$t_{uv}$  & Travel time from task $u \in V$ to task $v \in V$\\
$c_{uv}$  & Travel cost from task $u \in V$ to task $v \in V$ \\ 
$\dmin{uv}, \dmax{uv}$  & Min / max allowed difference between the start of task $u$ and $v$, 
$\{u,v\} \in D$, if $u$ start no later than $v$ \\ 
$\lsF{v}{F}$  & Latest starting time of fragment $F \in \AllFrag$   \\ 
$\esF{v}{F}$  &  Earliest completion time of fragment $F \in \AllFrag$\\ 
$\durF{F}$  & Minimum duration of fragment $F \in \AllFrag$ \\ 
$\demF{F}$  &  Total demand of fragment $F \in \AllFrag$ \\ 
$\costF{F}$  & Total travel cost of fragment $F \in \AllFrag$ \\ 
\bottomrule
\end{tabular}}{}
\end{table} 

\begin{table}[H]
\TABLE
{Notation used for the \solalg. 
  \label{tab:notation:alg}}
{\begin{tabular}{ll}
\toprule
Sets & Description  \\ 
\midrule 
$\CGfrag$             & Fragments obtained during column-and-row generation in Step~\ref{sol:proc:step:lb} \\ 
$\CGfragFromTo{u}{v}$  & Fragments in $\CGfrag$ starting at $u \in \TDVis$ and ending at $v \in \TDVis$ \\ 
$\ENfrag$   & Fragments obtained during the enumeration procedure in Step~\ref{sol:proc:step:enum}  \\ 
$\ENfragTo{v}$   & Fragments in $\ENfrag$ ending at $v \in \TDVis$ \\ 
$\ENfragFrom{v}$   &  Fragments in $\ENfrag$ starting at $v \in \TDVis$ \\ 
$\IncSol$ & Fragments best feasible solution found \\
\midrule 
Parameters & Description  \\ 
\midrule 
$\LBalg$ & Lower bound for solution value\\
$\IncSolVal$ & Upper bound for solution value \\ 
$\UBguess$ & Candidate upper bound for solution value \\ 
$\UBinitstep$ & Maximum gap for initial candidate upper bound  \\
$\UBstep$ & Maximal increase of candidate upper bound \\ 
$k_{\mathrm{max}}$ & Maximum cardinality considered when enumerating sets to separate FSECs  \\
$\CGtimeMIP$ & Time limit for finding an initial upper bound in Step~\ref{sol:proc:step:init:ub} \\ 
$\maxfragenum$ & Maximum number of identified fragments before \solalg stops   \\ 
\midrule 
Other & Description  \\ 
\midrule 
\FF     &    Fragment-based formulation~\eqref{eq:ff} with constraints \eqref{eq:ff:time:mtz}-\eqref{eq:ff:time:b}, 
\eqref{eq:ff:td:uv:le}-\eqref{eq:ff:td:p}, and \eqref{eq:ff:load:mtz}-\eqref{eq:ff:load:l} \\ 
\FFVI    &   \FF strengthened by inequalities \eqref{eq:FSEC}, \eqref{eq:InfPathV}, \eqref{eq:InfPathTD}, and \eqref{eq:RCC} \\ 
\FFVIm   &   \FF strengthened by constraints \eqref{eq:FSEC}, \eqref{eq:InfPathV}, \eqref{eq:InfPathTD}, and \eqref{eq:FRCC}  \\ 
$\REL{\mathcal{X}}$ & Linear relaxation formulation $\mathcal{X}$ \\ 
$\FORM{\mathcal{X}}[\AllFrag']$ & Formulation $\mathcal{X}$ restricted to the set of fragments $\AllFrag'\subseteq \AllFrag$ \\ 
$\VAL[\mathcal{X}]$ & Solution value formulation $\mathcal{X}$ ($+\infty$ if no feasible solution is found)    \\ 
\bottomrule
\end{tabular}}{}
\end{table}

\section{Proofs of statements}\label[appsec]{app:proofs}

This section provides the proofs of all theorems, propositions, and corollaries presented in the paper. For convenience, each result is restated before its proof. 

\fragderiveschedcond*

\begin{proof}{Proof.}
Consider a fragment $F=(1,\dots,v_{\lenF{F}})$. We first show that if there are no temporal dependencies between $v_{1}$ and $v_{\lenF{F}}$, i.e., $\{v_1,v_{\lenF{F}}\} \notin D$, then for each $i=1,\dots,\lenF{F}$:   
    \begin{enumerate}[(i)]
        \item $\esF{v_{i}}{F[i]}$ is the earliest \emph{starting time} of $v_i$ after performing $F[i]$, \label{pf:frag:char:cond:es} 
        \item $\lsF{v_1}{F[i]}$ is the latest \emph{starting time} at $v_1$ when performing $F[i]$, \label{pf:frag:char:cond:ls} and 
        \item $\durF{F[i]}$ is the minimum duration of $F[i]$ and can be achieved for each starting time of $v_1$ in $[\esF{v_{i}}{F[i]} - \durF{F[i]},\lsF{v_1}{F[i]}]$.\label{pf:frag:char:cond:dur}      
    \end{enumerate}   
The initialization $\esF{v_{1}}{F[1]}=\alpha_{v_1}$, $\lsF{v_1}{F[1]}=\beta_{v_1}$, and $\durF{F[1]}=0$ satisfies conditions \eqref{pf:frag:char:cond:es}-\eqref{pf:frag:char:cond:dur}. Assuming these conditions hold for $F[i-1]$ with $i \in \{2,\dots \lenF{F}\}$, it follows by induction that: 
\begin{itemize}
    \item $\esF{v_{i}}{F[i]} = \max\{\esF{v_{i-1}}{F[i-1]} + d_{v_{i-1}} + t_{v_{i-1} v_{i}}, \alpha_{v_{i}} \}$ is the earliest completion time of $F[i]$, since $F[i-1]$ cannot be completed before $\esF{v_{i-1}}{F[i-1]}$; 
    \item $\lsF{v_{i}}{F[i]}= \min\{\lsF{v_1}{F[i-1]}, \beta_{v_i} - t_{v_{i-1} v_{i}} - d_{v_{i-1}} - \durF{F[i-1]}\}$ is the latest feasible starting time at $v_1$ for $F[i]$, since $v_{i-1}$ must start no later than $\beta_{v_i} - t_{v_{i-1} v_{i}} - d_{v_{i-1}}$ to reach $v_i$ within its time window; and
    \item $\durF{F[i]} = \max\{ \durF{F[i-1]} + d_{v_{i-1}} + t_{v_{i-1} v_{i}}, \alpha_{v_{i}} - \lsF{v_1}{F[i-1]}\}$ is the minimum duration of $F[i]$, since starting $v_1$ as late as possible, i.e., at $\lsF{v_{i}}{F[i]}$, implies $v_{i-1}$ cannot start before $\lsF{v_1}{F[i]} + \durF{F[i-1]}$. 
    If $\durF{F[i]} = \alpha_{v_{i}} - \lsF{v_1}{F[i-1]}$, then $\esF{v_{i}}{F[i]} = \alpha_{v_i}$, and this duration is achieved only for starting time $\lsF{v_1}{F[i]}$. Otherwise, the minimum duration $\durF{F[i]} =\durF{F[i-1]} + d_{v_{i-1}} + t_{v_{i-1} v_{i}}$ can be achieved for each starting time of $F[i]$ within the interval $[\esF{v_{i}}{F[i]} - \durF{F[i]},\lsF{v_1}{F[i]}]$.
\end{itemize}

    Temporal dependencies between $v_{1}$ and $v_{\lenF{F}}$ additionally impose that the minimum duration of $F$ must be in  
    $[\dmin{v_1 v_{\lenF{F}}}, \dmax{v_1 v_{\lenF{F}}}]$. If the standard updating rules yield $\durF{F[\lenF{F}]} \in [\dmin{v_1 v_{\lenF{F}}}, \dmax{v_1 v_{\lenF{F}}}]$, the temporal dependencies are not restrictive and the parameters are valid, whereas $\durF{F[\lenF{F}]} > \dmax{v_1 v_{\lenF{F}}}$ implies that the fragment is infeasible.  
    If $\durF{F[\lenF{F}]} < \dmin{v_1 v_{\lenF{F}}}$, the minimum duration must be increased to $\dmin{v_1 v_{\lenF{F}}}$. This higher minimum duration implies that $F$ cannot start later than $\beta_{v_{\lenF{F}}} - \dmin{v_1 v_{\lenF{F}}}$ and cannot finish before $\alpha_{v_1} + \dmin{v_1 v_{\lenF{F}}}$, and can only be achieved for each starting time within $[\max\{\alpha_{v_1},\esF{}{F[\lenF{F}]} - \dmin{v_1 v_{\lenF{F}}}\}, \min \{\lsF{}{F[\lenF{F}]} , \beta_{v_{\lenF{F}}} - \dmin{v_1 v_{\lenF{F}}}\}]$.

\end{proof}

\fragdur*
\begin{proof}{Proof.}
Consider a fragment $F=(v_1,\dots,v_{\lenF{F}}) \in \AllFrag$. From  \cref{prop:frag:derive:sched:cond} and its proof, we observe that a schedule with duration $\durF{F}$ can be achieved for each starting time in $[\esF{}{F} - \durF{F}, \lsF{}{F}]$. For each starting time $t \in [\max\{\alpha_{v_1}, \esF{}{F} - \dmin{v_1 v_{\lenF{F}}}\}, \esF{}{F} - \durF{F})$, the additional mandatory waiting time resulting from starting a fragment at a time earlier than the one 
resulting in a minimum duration is equal to $(\esF{}{F} - \durF{F}) - t$. The duration of a compact schedule for a starting time $t \in [\max\{\alpha_{v_1}, \esF{}{F} - \dmin{v_1 v_{\lenF{F}}}\}, \esF{}{F} - \durF{F})$ is, therefore, equal to $\esF{}{F} - t$.
\end{proof}

\numtifi*
\begin{proof}{Proof.}
Recall that the left-hand side of each TIFI for task $v\in \TDVis$ and a given time $t\in [\alpha_v,\beta_v]$ contains the variables associated to all fragments ending at $v$ whose earliest completion time is larger than or equal to $t$ and to all fragments starting at $v$ whose latest starting time is smaller than $t$, i.e., $\{F \in \FragTo{v} : \esF{v}{F} \geq t\}$ and $\{F \in \FragFrom{v} : \lsF{v}{F} < t\}$. Obviously, the latter set is non-decreasing with $t$. Now, consider two consecutive earliest completion times $t^1, t^2\in  \{\esF{v}{F} : F \in \FragTo{v}\}$, i.e., $t^1<t^2$ such that there does not exist an earliest completion time $t^3\in \{\esF{v}{F} : F \in \FragTo{v}\}$ with $t^1<t^3 <t^2$. Then, the sets $\{F \in \FragTo{v} : \esF{v}{F} \geq t\}$ are identical for each $t\in (t^1,t^2)$ and a subset of $\{F \in \FragTo{v} : \esF{v}{F} \geq t^2\}$. Together with the aforementioned monotonicity of $\{F \in \FragFrom{v} : \lsF{v}{F} < t\}$, this implies that each TIFI for node $v\in \TDVis$ whose time $t\in [\alpha_v,\beta_v]$ is smaller than the latest completion time (i.e., $t<\max \{\esF{v}{F} : F \in \FragTo{v}\}$) and does not correspond to a completion time (i.e., $t\notin \{\esF{v}{F} : F\in \FragTo{v}\}$) is implied by the TIFI associated to the smallest completion time larger than $t$. TIFIs for times $t\in (max \{\esF{v}{F} : F \in \FragTo{v}\},\beta_v]$ are implied from constraints~\eqref{eq:ff:ass} and \eqref{eq:ff:flow} which ensure $\sum_{F \in \FragFrom{v} } x_F\le 1$. \Halmos
\end{proof}

\numtdifi*
\begin{proof}{Proof.}
Consider a pair $\{u,v\}\in D$ and observe that each of the four considered types of inequalities considers the variables associated to two sets of fragments on its left-hand side. As we observe below, in each of the four cases one of these sets has a certain monotonicity property while the other remains constant between two consecutive time points of the relevant set stated in the theorem. 
\begin{itemize}
	\item For inequalities~\eqref{eq:InfPathTD:uv:min}, the set $\{F \in \FragFrom{v} : \lsF{v}{F} < t+\dmin{uv}\}$ is non-decreasing in time $t$, while the set
	$\{F \in \FragTo{u} : \esF{u}{F}\ge t\}$ is non-increasing in time $t$ and remains constant between two consecutive earliest completion times at $u$. Thus, each inequality~\eqref{eq:InfPathTD:uv:min} for a time $t$ between two consecutive earliest completion times is dominated by the inequality for the latest of those two points.
	\item For inequalities~\eqref{eq:InfPathTD:uv:max}, the set $\{F \in \FragTo{v}: \esF{v}{F} > t+\dmax{uv}\}$ is non-increasing in time $t$, while $\{F \in \FragFrom{u}: \lsF{u}{F} \leq t\}$ is non-decreasing in time $t$ and remains constant between two consecutive latest starting times at $u$. Any inequality~\eqref{eq:InfPathTD:uv:max} for a time between two consecutive latest starting times is, therefore, dominated by the inequality for the earliest of these two points. 	
	\item For inequalities~\eqref{eq:InfPathTD:vu:min}, the set $\{F \in \FragFrom{u} : \lsF{u}{F} < t+\dmin{vu}\}$ is non-decreasing in time $t$ while the set $\{F \in \FragTo{v} : \esF{v}{F}\ge t\}$ is non-increasing in time $t$ and remains constant between two consecutive earliest completion times at $v$. Hence, each inequality~\eqref{eq:InfPathTD:vu:min} for a time $t$ between two consecutive earliest completion times is dominated by the latter of the two. 
	\item For inequalities~\eqref{eq:InfPathTD:vu:max}, the set $\{F \in \FragTo{u}: \esF{u}{F} > t+\dmax{vu}\}$ is non-increasing in time $t$, while $\{F \in \FragFrom{v}: \lsF{v}{F} \leq t\}$ is non-decreasing in time $t$ and remains constant between two consecutive latest starting times at $v$. Any inequality~\eqref{eq:InfPathTD:vu:max} for a time between two consecutive latest starting times is, thus, dominated by the inequality for the earliest of the two.
\end{itemize}

In addition, observe that inequalities \eqref{eq:InfPathTD:uv:max} and \eqref{eq:InfPathTD:vu:max} for time points before the earliest in their relevant set, and inequalities \eqref{eq:InfPathTD:uv:min} and \eqref{eq:InfPathTD:vu:min} for time points after the latest in their relevant set, are implied since $\sum_{F \in \FragFrom{w} } x_F\le 1$ and $\sum_{F \in \FragFrom{w} } x_F\le 1$ holds for $w \in \{u,v\}$ due to constraints~\eqref{eq:ff:ass} and \eqref{eq:ff:flow}.  
Hence, the validity of the theorem for all four sets of inequalities follows using the same argumentation as in the proof of \cref{thm:InfPathV:num}.
\end{proof}

\dpdominanceboundcomplcost*

\begin{proof}{Proof.}
Consider two forward paths $f$ and $g$ representing incomplete fragments such that their corresponding labels satisfy all the dominance criteria in favor of $f$ except for the reduced cost, i.e., $\flCost{f} > \flCost{g}$.
Thus, $\flLS{f\oplus \hat{f}} \geq \flLS{g\oplus \hat{f}}$, $\flDur{f\oplus \hat{f}} \leq \flDur{g\oplus \hat{f}}$, and $\flLoad{f\oplus \hat{f}} \leq \flLoad{g\oplus \hat{f}}$ hold for any feasible extension $\hat{f}$ of $g$ such that $g \oplus \hat{f}$ is a fragment.  

We show that $\rcBoundDom{f}{g}$ as defined in \eqref{eq:def:lb:compl:cost} satisfies condition~\eqref{eq:cond:lb:compl:cost} by separately deriving lower bounds for the terms associated with $\dTimeUB{\flStart{f}}$ and $\dLoadUB{\flStart{f}}$, i.e., for $\flLS{f \oplus \hat{f}} - \flLS{g \oplus \hat{f}}$ and $\flLoad{g \oplus \hat{f}} - \flLoad{f \oplus \hat{f}}$.

\medskip  

\noindent\textit{Term associated with $\dTimeUB{\flStart{f}}$.}
We show that $\flLS{f \oplus \hat{f}} - \flLS{g \oplus \hat{f}} \geq \min\{ \flLS{f} - \flLS{g},\; (\flLS{g} + \flDur{g}) - \flDur{f} - \flLS{g},\; \max \{ \flLS{g},\flLSDMin{g} \} - \flLS{g} \}$ for any feasible extension $\hat{f}= (v_1, \dots, v_{\ell})$ of $g$, by considering three mutually exclusive cases:
\begin{enumerate}[(i)]
    \item\label{pf:comp:bound:ls:same} $\flLS{f \oplus \hat{f}} = \flLS{f}$: Since the latest starting time of $g$ can only decrease after an extension, i.e., $\flLS{g \oplus \hat{f}} \leq \flLS{g}$, we obtain $\flLS{f \oplus \hat{f}} - \flLS{g \oplus \hat{f}} \geq \flLS{f} - \flLS{g}$.
    
    \item\label{pf:comp:bound:ls:tw} $\flLS{f \oplus \hat{f}} < \flLS{f}$ and $\flLS{f \oplus \hat{f}}$ is determined by the time windows of the tasks within $\hat{f}$: The time window of any task $v_i \in \hat{f}$ imposes a latest starting time at $\flStart{f}$ that depends on the duration of the partial path up to $v_i$. Let $v_{i} \in \hat{f}$ be the task determining the latest starting time of $f \oplus \hat{f}$, i.e., $\flLS{f \oplus \hat{f}} = \flLS{f \oplus (v_1,\dots,v_i)} = \beta_{v_i} - \flDur{f \oplus (v_1,\dots,v_i)}$. Since extending $f$ with $(v_1,\dots,v_i)$ reduces the latest starting time at $\flStart{f}$, it holds that $\flDur{f \oplus (v_1,\dots,v_i)} = \flDur{f} + \sum_{j=0}^{i-1} (d_{v_j} + t_{v_j v_{j+1}})$ with $v_{0}= \flEnd{f}$. Since $\flDur{g \oplus (v_1,\dots,v_i)} \geq  \flDur{g} + \sum_{j=0}^{i-1} (d_{v_j} + t_{v_j v_{j+1}})$, it follows that $\flDur{g \oplus (v_1,\dots,v_i)} - \flDur{f \oplus (v_1,\dots,v_i)} \geq \flDur{g} - \flDur{f}$. Combined with $\flLS{g \oplus \hat{f}} \leq \flLS{g}$, we obtain $\flLS{f \oplus \hat{f}} - \flLS{g \oplus \hat{f}} \geq \flLS{f \oplus (v_1,\dots,v_i)} - \flLS{g \oplus (v_1,\dots,v_i)} \geq  \flDur{g} - \flDur{f} = (\flLS{g} + \flDur{g}) - \flDur{f} - \flLS{g}$. 

    \item\label{pf:comp:bound:ls:dmin} $\flLS{f \oplus \hat{f}} < \flLS{f}$ and $\flLS{f \oplus \hat{f}}$ is determined by a minimum time difference $\dmin{\flStart{f} v_{\ell}}$: Extension $\hat{f}$ imposes $\flLS{g \oplus \hat{f}} \leq \beta_{v_{\ell}} - \dmin{\flStart{g} v_{\ell}}$. 
    Combined with $\flLS{g \oplus \hat{f}} \leq \flLS{g}$, we get $\flLS{g \oplus \hat{f}} \leq \min\{\flLS{g}, \beta_{v_{\ell}} - \dmin{\flStart{g} v_{\ell}}\}$. 
    Since the same minimum duration constraint implies $\flLS{f \oplus \hat{f}} = \beta_{v_{\ell}} - \dmin{\flStart{f} v_{\ell}} = \beta_{v_{\ell}} - \dmin{\flStart{g} v_{\ell}}$, we obtain $\flLS{f \oplus \hat{f}} - \flLS{g \oplus \hat{f}} \geq \beta_{v_{\ell}} - \dmin{\flStart{g} v_{\ell}} - \min\{\flLS{g}, \beta_{v_{\ell}} - \dmin{\flStart{g} v_{\ell}}\} = \max \{ \flLS{g}, \beta_{v_{\ell}} - \dmin{\flStart{g} v_{\ell}} \} - \flLS{g} \geq \max \{ \flLS{g}, \min_{u \in V_{\dmin{}}^{g}} \{\beta_{u} - \dmin{\flStart{g} u} \} \} - \flLS{g} = \max \{ \flLS{g}, \flLSDMin{g}\} - \flLS{g}.$
\end{enumerate}
Taking the minimum over the three cases and multiplying by $\dTimeUB{\flStart{f}} \geq 0$ yields the first term in \eqref{eq:def:lb:compl:cost}.

\medskip 

\noindent\textit{Term associated with $\dLoadUB{\flStart{f}}$.}
Any feasible extension $\hat{f}$ increases the collected demand of both $f$ and $g$ by the same amount. Therefore, $\flLoad{g \oplus \hat{f}} - \flLoad{f \oplus \hat{f}} = \flLoad{g} - \flLoad{f}$ for all $\hat{f}$, yielding the second term $(\flLoad{g} - \flLoad{f})\dLoadUB{\flStart{f}}$. \Halmos
\end{proof}

\section{Pre-processing}\label[appsec]{app:pre:procc}
The pre-processing applied at the start of the \solalg
\begin{inparaenum}[(i)]
    \item updates the time windows of tasks by accounting for the travel time from and to the depot,
    \item extends the set of temporal dependencies, and
    \item tightens the time windows of tasks with temporal dependencies based on those restrictions.
\end{inparaenum}

First, the time window of each task $v \in V$ is updated to $[\max\{\alpha_u, t_{0v}\},\min\{\beta_v,\Tmax - t_{v0} -d_{v}\}]$, as $t_{0v}$ and $\Tmax - t_{v0} -d_{v}$ are the earliest and latest starting times of $v$ that fit within the planning horizon $[0,\Tmax]$. 

Second, the pre-processing extends the set $D$ of temporal dependencies. The set $D$ induces a temporal dependency graph $(\TDVis, D)$. 
 The connected components of this graph indicate groups of tasks that are linked through temporal dependencies. A temporal dependency is added between each pair of tasks within such a group.  
Specifically, consider tasks $u,v,w \in \TDVis$ with $\{u,v\}, \{v,w\} \in D$. \cref{tab:preprocc:td} describes how each combination of the starting order of $\{u,v\}$ and $\{v,w\}$ implies temporal restrictions for each starting order of $u$ and $w$. Note that not every combination is feasible, for instance, if the implied maximum difference is negative. We obtain a temporal dependency between $\{u,w\}$ defined by $\dminAll$ and $\dmaxAll$ for each starting order of $u$ and $w$, using the implied lower and upper bound on the minimum and maximum difference in starting time from the set of feasible combinations. This yields new temporal dependencies and potentially strengthens existing ones. 

After extending the set of temporal dependencies $D$, we perform time window tightening following \cite{VANMONTFORT2025104235} and inspired by \cite{RASMUSSEN2012598, Dohn2011}. For each pair $\{u,v\} \in D$ without a pre-defined order in which the two tasks must be performed, we update the time window of $u$ to $[\max\{\alpha_u, \alpha_v - \dmax{uv}\},\min\{\beta_u, \beta_v + \dmax{vu}\}]$ and that of $v$ to $[\max\{\alpha_v, \alpha_u - \dmax{vu}\},\min\{\beta_v, \beta_u + \dmax{uv}\}]$. 
If $u$ must start before $v$, the time windows are updated to $[\max\{\alpha_u, \alpha_v - \dmax{uv}
\}, \min\{\beta_u, \beta_v - \dmin{uv}\}]$ for $u$ and $[\max\{\alpha_v, \alpha_u + \dmin{uv}\},\min\{\beta_v, \beta_u + \dmax{uv}\}]$ for $v$.

\begin{table}
\TABLE
{The minimum (min diff) and maximum (max diff) implied differences in starting times of $u$ and $w$ for each starting order, based on the starting order of tasks $\{u,v\} \in D$ and $\{v,w\} \in D$. \label{tab:preprocc:td}}
{\begin{tabular}{ccccc}
\toprule
& \multicolumn{2}{c}{ $u \preceq w$} & \multicolumn{2}{c}{$w \preceq u$} \\
 \cmidrule(lr){2-3} \cmidrule(lr){4-5}	
 & min diff & max diff & min diff & max diff \\
\midrule
$u \preceq v \preceq w$ & $\dmin{uv} + \dmin{vw}$ & $\dmax{uv} + \dmax{vw}$ & $\Tmax$ & $\Tmax$ \\ 
$u \preceq v, w \preceq v$ & $\dmin{uv} - \dmax{wv}$ & $\dmax{uv} - \dmin{wv}$ & $\dmin{wv} - \dmax{uv} $ & $\dmax{wv} - \dmin{uv}$ \\
$v \preceq u, v \preceq w$ & $-\dmax{vu} + \dmin{vw}$ & $-\dmin{vu} +\dmax{vw}$ & $-\dmax{vw} + \dmin{vu}$ & $-\dmin{vw} + \dmax{vu}$ \\
$w \preceq v \preceq u$ & $\Tmax$ & $\Tmax$ & $\dmin{wv} + \dmin{vu}$ & $\dmax{wv} + \dmax{vu}$ \\ 
\bottomrule
\end{tabular}}{}
\end{table}

\section{Impact of valid inequalities on the labeling algorithm}\label[appsec]{app:lab:alg:vi}
This section discusses the impact of the row generation on the labeling algorithm. As described in \cref{sss:solappr:lb:vi}, \solalg separates and adds violated FSECs~\eqref{eq:FSEC}, TIFIs~\eqref{eq:InfPathV}, TDIFIs~\eqref{eq:InfPathTD}, and RCCs~\eqref{eq:RCC}. 
We describe how these constraints affect the reduced cost of a fragment and how the associated dual costs are integrated within the labeling algorithm.

\paragraph{FSECs and RCCs}
The FSECs and RCCs are robust \citep{Pessoa2020}. Therefore, we simply subtract the dual variable corresponding to each included FSEC defined for a set $S \subseteq \TDVis$ from the completion costs of fragments starting and ending in $S$, i.e. fragments $F \in \FragFromTo{S}{S}$. We also subtract the dual variables corresponding to each included RCC for a set $S \subseteq V$ from the reduced cost of all arcs entering $S$. 

\paragraph{TIFIs and TDIFIs}
Dual variables corresponding to TIFIs and TDIFIs are included in the completion costs of fragments as follows. Consider a TIFI defined for a task $v \in \TDVis$ and time $t \in [\alpha_v, \beta_{v}]$, and a fragment $F \in \FragTo{v} \cup \FragFrom{v}$. The dual variable of this TIFI is subtracted from $\rcComplFrag{F}$ if: 
\begin{itemize}
    \item $F$ has an earliest starting time at $v$ of at least $t$ ($\esF{}{F} \geq t$) , or
    \item $F$ has a latest starting time at $v$ of at most $t$ ($\lsF{}{F} < t$).
\end{itemize}
For TDIFIs, a similar argument applies. Consider a pair of tasks $\{u,v\} \in D$ and a TDIFI of type \eqref{eq:InfPathTD:uv:min} or \eqref{eq:InfPathTD:uv:max} defined for time $t \in [\alpha_u,\beta_u]$, or a TDIFI of type \eqref{eq:InfPathTD:vu:min} or \eqref{eq:InfPathTD:vu:max} for time $t \in [\alpha_v, \beta_v]$. Given a fragment $F \in \FragTo{u} \cup \FragFrom{u} \cup \FragTo{v} \cup \FragFrom{v}$, the dual variable of this TDIFI is subtracted from $\rcComplFrag{F}$ if its earliest starting time or latest completion time at $u$ and / or $v$ satisfy the conditions defined by the TDIFI at time $t$. 
\paragraph{Bound $\rcBoundDom{f}{g}$} The bound on the difference in completion costs between two fragments $f$ and $g$ as defined in \cref{thm:dp:dominance:bound:compl:cost} is also affected by the TIFIs and TDIFIs. Therefore, this bound is strengthened by including the effects of the dual variables of these cuts.

\section{Benchmark approaches}\label[appsec]{app:bench:appr}
This section describes the two benchmark methods to which we compare the performance of the \solalg. \cref{app:time:window:branch} details the re-implementation of the branch-and-price (BP) approach of \cite{Dohn2011} based on time window branching, which can handle a broad range of temporal dependency types but is not applicable to non-overlapping dependencies. \cref{app:af} presents the arc-based formulation that can handle all types of temporal dependencies considered in this article and is solved with a general purpose solver. 

\subsection{Branch-and-price based on time window branching}\label[appsec]{app:time:window:branch}

We re-implemented the BP approach from \citet{Dohn2011} as the original source code is not available. We acknowledge that such a re-implementation may (slightly) differ from the original method in particular w.r.t.\ aspects that could not fully described in the original paper (due to space limitation). We do also believe, however, that such a re-implementation also supports a fair comparison using the  same hardware, programming language, and the same approach in implementing the labeling algorithm for solving the pricing subproblems. Our re-implementation of the BP approach based on time windows branching follows the original work of \citet{Dohn2011} as closely as possible and we, therefore, refrain from giving a full description of their method. Instead, we detail our implementation of aspects that were not fully described in \citet{Dohn2011} and improvements made by us to ensure a fair comparison. Since they do not specify their labeling algorithm, we had to implement the labeling algorithm used for solving the pricing subproblem from scratch. To facilitate a fair comparison, we used a straightforward adaptation of the one used for our own algorithm to the case of full routes without inter-route or intra-route temporal dependency restrictions.   
Thus, as opposed to the original paper by \citet{Dohn2011}, our re-implementation also uses the $ng$-path relaxation which is, by now, a standard technique known to improve the performance of route-based BP approaches. Experiments were also performed for an adjusted version of the labeling algorithm mentioned by \citet{Dohn2011}, which excludes routes that contain pairs of tasks that are mutually exclusive due to their temporal dependency restrictions. The latter has, however, not been used in the results reported in this article as preliminary experiments showed that this did not uniformly improve the results due to the slower labeling algorithm. 
These experiments were also used to identify promising values of other parameters for our computational study. In particular, we set the size of the $ng$-neighborhood to $9$ and the maximum number of columns added in each pricing iteration to $50$.

Our implementation also uses the pre-processing procedure presented in \cite{Dohn2011} that aims to extend and strengthen the set of temporal restrictions. 
The branching rules also follow the description of \cite{Dohn2011}, whereby time window branching is performed on tasks with temporal dependencies at which a violation of a temporal dependency is observed. 
Using a strategy similar to our own algorithm, the restricted master problem is initialized using all single task routes and an artificial route with very high costs that satisfies all constraints and therefore ensures feasibility.

\subsection{Arc-based formulation}\label[appsec]{app:af}
 
The arc-based formulation~\eqref{eq:af} can handle all types of temporal dependencies considered in this article. We solve it using a general purpose solver after performing the pre-processing steps specified in \cref{app:pre:procc}.

Formulation~\eqref{eq:af} is defined on graph $G= (N,A)$ where $A$ represents all feasible travels between pairs of tasks. Specifically, $A = \{(u,v) \in N \times N : u \neq v, \; \alpha_{u} +  d_u + t_{uv} \leq \beta_v, \; q_{u} + q_{v} \leq \VehCap\} \setminus \{(u,v) \in V \times V : \{u,v\} \in D, \; \beta_v - \alpha_{u} < \dmin{uv}, \; \max\{d_u + t_{uv}, \alpha_v - \beta_u\} > \dmax{uv} \}$ contains all travels that are feasible with respect to the time windows, capacity constraints, and temporal dependencies. Formulation~\eqref{eq:af} uses a binary arc variable $x_{uv} \in \{0,1\}$ to indicate whether arc $(u,v)\in A$ is selected. For each task $v \in V$, variable $b_v\ge 0$ represents the starting time, while variable $l_v$ indicates the demand served by the vehicle after performing $v$. Binary variables $p_{uv} \in \{0,1\}$ indicate the order of tasks $\{u,v\}\in D$, where $p_{uv} = 1$ indicates that $u$ starts before or at the same time as $v$. 
\begin{subequations}\label{eq:af}
\begin{align}
\min \quad   & \sum_{a \in A} c_{a} x_{a} \label{eq:af:obj}  \\
\mbox{s.t.}\quad & \sum_{(0,v)\in A} 
     x_{0v} \leq |\Veh| 
     \label{eq:af:vehicle:limit} \\ 
& \sum_{(u,v)\in A} x_{uv}  = 1 & \forall v \in V   
\label{eq:af:ass} \\
& \sum_{(u,v)\in A} x_{uv}  - \sum_{(v,u)\in A} x_{vu} = 0 & \forall v \in V \label{eq:af:flow} \\ 
&  b_{u} + (d_{u} + t_{uv}) x_{uv} \leq   b_{v} + (\beta_{u}-\alpha_v) (1- x_{uv}) & \forall (u,v) \in A : u,v \in V
\label{eq:af:time:mtz} \\  
& b_v - b_u \le \delta^{\mathrm{max}}_{uv} p_{uv}           & \forall \{u,v\}\in D, u<v \label{eq:af:td:uv:le}\\  
& b_u - b_v \le \delta^{\mathrm{max}}_{vu}  (1-p_{uv})               & \forall \{u,v\}\in D, u<v \label{eq:af:td:vu:le} \\             
& b_v - b_u \ge \delta^{\mathrm{min}}_{uv} p_{uv} - \max\{0,\beta_u-\alpha_v\} (1 - p_{uv})   &          \forall \{u,v\}\in D, u<v \label{eq:af:td:uv:ge}\\     
& b_u - b_v \ge \delta^{\mathrm{min}}_{vu} (1 - p_{uv})  -\max\{0,\beta_v-\alpha_u\} p_{uv} & \forall \{u,v\}\in D, u<v \label{eq:af:td:vu:ge}  \\ 
& l_{u} + q_v x_{uv} \leq l_v + (\VehCap - q_v) (1-x_{uv}) & \forall (u,v) \in A:  u,v \in V   \label{eq:af:load:mtz}  \\ 
& x_{uv} \in  \{0,1\} & \forall (u,v) \in A \label{eq:af:x} \\ 
& \alpha_v \leq  b_v \leq \beta_v & \forall v \in V \label{eq:af:time:b} \\ 
& p_{uv}  \in  \{0,1\} & \forall  \{u,v\}\in D, u<v  \label{eq:af:td:p} \\ 
& q_v\le l_{v}  \le \VehCap & \forall v \in V \label{eq:af:load:l}
\end{align}
\end{subequations}
The objective~\eqref{eq:af:obj} minimizes the total travel cost. Constraints~\eqref{eq:af:vehicle:limit} enforce that at most $|\Veh|$ vehicles depart from the depot, while equations \eqref{eq:af:ass} ensure that exactly one vehicle travels to each task. Flow conservation constraints~\eqref{eq:af:flow} guarantee that any vehicle that arrives at the location of a task must also depart from it. Inequalities~\eqref{eq:af:time:mtz} make sure that a vehicle can only start a task after completing its preceding task and consecutive travel.  Inequalities~\eqref{eq:af:td:uv:le}-\eqref{eq:af:td:vu:ge} ensure the temporal dependencies. They are identical to inequalities \eqref{eq:ff:td:uv:le}-\eqref{eq:ff:td:vu:ge} described in \cref{ss:prob:def:frag:form}. 
Constraints \eqref{eq:af:load:mtz} enforce that the cumulative served demand of a vehicle at a tasks is at least the amount of served demand at its predecessor plus the demand of the task itself. Restrictions \eqref{eq:af:x}-\eqref{eq:af:load:l} define the domains of the decision variables. 

\section{Separation of FSECs}\label[appsec]{app:sep:FSEC:alg}
This section describes the separation of FSECs based on enumerating all subsets of tasks with temporal dependencies $S \subseteq \TDVis$ whose cardinality is at most $k_\mathrm{max}$ and which uses the support graph $G[\xsol]= (\{0\} \cup \TDVis,A[\xsol])$ implied from the current solution $\xsol$, i.e., $A[\xsol] = \{(u,v) : u,v \in \{0\} \cup \TDVis \land \exists F \in \CGfragFromTo{u}{v} : \xsol_{F} > 0\}$ with arc weights $w_{uv} = \sum_{F \in \CGfragFromTo{u}{v}} \xsol_{F}$, $\forall (u,v) \in A[\xsol]$. 

For each subset $S \subseteq \TDVis$ with $|S| \leq k_\mathrm{max}$, if $\sum_{(u,v) \in A[\xsol]: u,v \in S} w_{uv} > 1$, the separation procedure checks if the corresponding FSEC is violated by computing the minimum number of vehicles $V_\mathrm{min}(S)$ needed to perform all tasks in $S$.
To compute $\FSECVehMin{S}$, we increase the number of required vehicles $k$ from 1 to $|S|$ until a feasible solution is found. For a given number of vehicles $k$, we enumerate all possible combinations of $k$ routes that together perform all tasks in $S$.
For each candidate set of routes, we examine its feasibility by the following steps. For notational convenience, let $D' = \{\{u,v\} \in D: u,v \in S\}$ denote the subset of temporal dependency pairs restricted to tasks in $S$. 
\begin{enumerate}
	\item Check whether each route satisfies the capacity restriction and whether all tasks can be performed consecutively within their given time windows. 
	\item For each route, calculate the earliest arrival time, earliest starting time, latest starting time, and latest departure time for each of the tasks. The values $\{\eaT{v}\}_{v \in S}, \{\esT{v}\}_{v \in S}, \{\lsT{v}\}_{v \in S},$ and $\{\ldT{v}\}_{v \in S}$ describe the scheduling flexibility of the tasks within their routes without considering temporal dependencies. 
    \item Initialize the set of currently considered temporal dependencies as $D'_{\mathrm{cons}}= \emptyset$ 
	\item Select a temporal dependency pair $\{u,v\}\in D' \setminus D'_{\mathrm{cons}}$ 
    for which the starting order of $u$ and $v$ is fixed (e.g., $u$ must start no later than $v$). Add this pair to $D'_{\mathrm{cons}}$ and initialize $D'_{\mathrm{check}} = \{\{u,v\}\}$ as the set of temporal dependencies that must be enforced. \label{alg:Vmin:new:pair:Dcons} 
    \item Select a pair $\{u,v\} \in D'_{\mathrm{check}}$ 
    whose temporal dependency must be enforced, and remove it from $D'_{\mathrm{check}}$.\label{alg:Vmin:new:pair:Dcheck} 
	\begin{enumerate}
		\item Update the earliest and latest starting times of $u$ and $v$ by removing starting times that cannot satisfy their temporal restrictions using the time window tightening procedure described in \cref{app:pre:procc}. If, for example, $u$ precedes $v$, $\esT{u} = \max\{\esT{u},\esT{v} - \dmax{uv}\}, \; \esT{v} = \max\{ \esT{v}, \esT{u} + \dmin{uv}\}, \; \lsT{u} = \min\{\lsT{u},  \lsT{v} - \dmin{uv}\}, \; \lsT{v}= \min\{\lsT{v}, \lsT{u} + \dmax{uv}\}$. 
		\item Update the timing variables of the remaining tasks in the routes containing $u$ and $v$.   
		\item If any previously enforced temporal dependency in $D'_{\mathrm{cons}}$ becomes violated, add it to $D'_{\mathrm{check}}$. 
	\end{enumerate}
		\item If $|D'_{\mathrm{check}}| > 0$, return to Step \ref{alg:Vmin:new:pair:Dcheck}. Otherwise, if $|D' \setminus D'_{\mathrm{cons}}| > 0$, return to Step \ref{alg:Vmin:new:pair:Dcons}.  
\end{enumerate}
If at any point the earliest starting time of a task exceeds its latest starting time, the current set of routes is infeasible and the algorithm terminates. Otherwise, a feasible schedule exists and can be obtained by starting each task at its updated earliest starting time. 
Notably, the procedure assumes that the starting order of each temporal dependency is either explicitly or implicitly fixed. For temporal dependencies without a fixed order, we explore the feasibility of both starting orders. 

\section{Additional results}\label[appsec]{app:ins:FSA}

This section provides additional insights into the impact of various components of \solalg. 

\paragraph{Impact of valid inequalities} 
\cref{fig:conf:gap:time} shows optimality gaps and solution times for various configurations of \solalg on instances with $50$ tasks. As the distributions for \solalg, \solalgNoBC, \solalgNoBCNoCut{TDIFI}, and \solalgNoBCNoCut{RCC} are very similar, only the distributions of \solalgNoBC are visualized. These results illustrate that adding the proposed valid inequalities increases the number of instances solved to optimality, reduces the optimality gaps, and accelerates the algorithm. 

\begin{figure}
	\FIGURE
	{\includegraphics[width=1.0\textwidth]{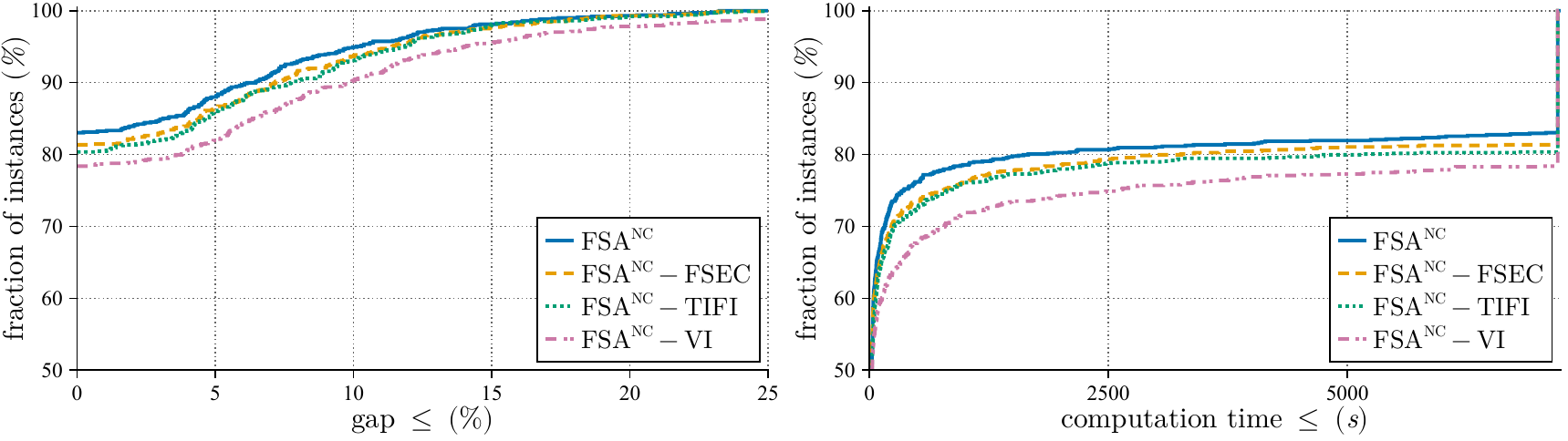}} 
	{Optimality gaps and runtimes for different configurations of \solalg on the instances with $50$ tasks. Optimality gaps are cutoff at 25\%\label{fig:conf:gap:time}}
	{}
\end{figure}

\paragraph{Column-and-row generation}
\cref{tab:FSANoBC:char:gen:lb:50,tab:FSANoBC:char:gen:lb:75} provide additional insights into the column-and-row generation (Step~\ref{sol:proc:step:lb} of our algorithm) for instances with $50$ and $75$ tasks, respectively. The tables compare various characteristics related to the inclusion of FSECs (FS), TIFIs (TI), TDIFIs (TD), and RCCs (RC), which are added in this order. They report the percentage increase of the lower bound (incr) per family of valid inequalities, the number of added valid inequalities (\#), the total separation time in seconds (time$_\mathrm{sep}$), and the computation time (time) of the column-and-row generation up to and including the considered family of valid inequalities. 
For the latter, column noRG shows the times of the column-and-row generation without separating valid inequalities. \cref{tab:FSANoBC:char:gen:lb:50,tab:FSANoBC:char:gen:lb:75} also distinguish between the subsets of instances with different numbers of temporal dependencies ($\reltempdep \in \{0.05, 0.15, 0.25\}$) and tight (type 1) and looser (type 2) time windows and capacity restrictions. 

\begin{table}
	\TABLE
	{Performance characteristics of the column-and-row generation of \solalgNoBC on instances with 50 tasks. \label{tab:FSANoBC:char:gen:lb:50}}
	{\begin{tabular}{rrccccccccccccccccc}
			\toprule
			\multirow{2}{*}{\reltempdep} & \multirow{2}{*}{Type} & \multicolumn{4}{c}{incr} &  \multicolumn{4}{c}{$\#$} & \multicolumn{4}{c}{time$_\mathrm{sep}$} & \multicolumn{5}{c}{time} \\ 
			\cmidrule(lr){3-6} \cmidrule(lr){7-10} \cmidrule(lr){11-14} \cmidrule(lr){15-19}
			& & FS & TI & TD & RC & FS & TI & TD & RC & FS & TI & TD & RC & noRG & FS & TI & TD & RC \\ \midrule
			0.05 & 1 & 0.2 & 0.9 & 0.2 & 0.1 & 0.3 & 10.5 & 2.9 & 1.8 & 0.2 & 0.0 & 0.1 & 0.1 & 7 & 8 & 10 & 11 & 13 \\
			0.05 & 2 & 0.5 & 0.7 & 0.1 & 0.0 & 0.6 & 7.5 & 2.0 & 0.2 & 0.3 & 0.1 & 0.1 & 0.0 & 333 & 343 & 409 & 432 & 434 \\
			\cmidrule(lr){1-2} \cmidrule(lr){3-6} \cmidrule(lr){7-10} \cmidrule(lr){11-14} \cmidrule(lr){15-19}
			0.15 & 1  & 1.1 & 1.8 & 0.4 & 0.3 & 3.3 & 19.8 & 7.9 & 8.5 & 4.4 & 0.1 & 0.1 & 0.2 & 3 & 5 & 9 & 11 & 16 \\
			0.15 & 2 & 1.6 & 1.0 & 0.2 & 0.0 & 3.6 & 8.0 & 3.7 & 0.3 & 5.1 & 0.1 & 0.1 & 0.0 & 23 & 32 & 43 & 52 & 53 \\
			\cmidrule(lr){1-2} \cmidrule(lr){3-6} \cmidrule(lr){7-10} \cmidrule(lr){11-14} \cmidrule(lr){15-19}
			0.25 & 1 & 2.3 & 2.1 & 0.5 & 0.4 & 8.1 & 21.2 & 11.0 & 14.9 & 27.3 & 0.0 & 0.1 & 0.1 & 2 & 11 & 21 & 29 & 35 \\
			0.25 & 2 & 3.3 & 1.3 & 0.3 & 0.0 & 6.1 & 9.1 & 5.4 & 0.5 & 29.4 & 0.0 & 0.1 & 0.0 & 8 & 34 & 44 & 53 & 55 \\
			\cmidrule(lr){1-2} \cmidrule(lr){3-6} \cmidrule(lr){7-10} \cmidrule(lr){11-14} \cmidrule(lr){15-19}
			all & all & 1.5 & 1.3 & 0.3 & 0.2 & 3.7 & 12.9 & 5.6 & 4.5 & 11.1 & 0.0 & 0.1 & 0.1 & 61 & 70 & 87 & 95 & 98 \\
			\bottomrule
	\end{tabular}}{}
\end{table}

\begin{table}
	\TABLE
	{Performance characteristics of the column-and-row generation of \solalgNoBC on the instances with 75 tasks. \label{tab:FSANoBC:char:gen:lb:75}}
	{\begin{tabular}{rrccccccccccccccccc}
			\toprule
			\multirow{2}{*}{\reltempdep} & \multirow{2}{*}{Type} & \multicolumn{4}{c}{incr} &  \multicolumn{4}{c}{$\#$} & \multicolumn{4}{c}{time$_\mathrm{sep}$} & \multicolumn{5}{c}{time} \\ 
			\cmidrule(lr){3-6} \cmidrule(lr){7-10} \cmidrule(lr){11-14} \cmidrule(lr){15-19}
			& & FS & TI & TD & RC & FS & TI & TD & RC & FS & TI & TD & RC & noRG & FS & TI & TD & RC \\ \midrule
			0.05 & 1  & 0.1 & 1.0 & 0.2 & 0.8 & 0.5 & 25.5 & 7.4 & 25.4 & 0.4 & 0.1 & 0.1 & 0.2 & 37 & 38 & 49 & 57 & 71 \\
			0.05 & 2 & 0.2 & 0.6 & 0.2 & 0.0 & 1.3 & 14.4 & 4.8 & 0.4 & 0.7 & 0.1 & 0.1 & 0.0 & 709 & 790 & 1024 & 1139 & 1149\\
			\cmidrule(lr){1-2} \cmidrule(lr){3-6} \cmidrule(lr){7-10} \cmidrule(lr){11-14} \cmidrule(lr){15-19}
			0.15 & 1 & 0.9 & 2.2 & 0.5 & 0.9 & 6.0 & 46.3 & 16.8 & 48.3 & 30.3 & 0.1 & 0.2 & 0.4 & 8 & 14 & 32 & 45 & 59\\
			0.15 & 2 & 1.1 & 1.3 & 0.3 & 0.0 & 7.1 & 20.5 & 8.8 & 1.7 & 23.9 & 0.1 & 0.2 & 0.0 & 56 & 93 & 158 & 200 & 210\\
			\cmidrule(lr){1-2} \cmidrule(lr){3-6} \cmidrule(lr){7-10} \cmidrule(lr){11-14} \cmidrule(lr){15-19}
			0.25 & 1 & 1.6 & 2.6 & 0.7 & 1.1 & 12.9 & 47.0 & 23.8 & 48.8 & 168.0 & 0.1 & 0.1 & 0.4 & 6 & 33 & 93 & 141 & 193  \\
			0.25 & 2 & 2.0 & 1.5 & 0.4 & 0.1 & 11.6 & 20.3 & 10.5 & 1.5 & 122.5 & 0.1 & 0.1 & 0.0 & 30 & 113 & 178 & 226 & 237  \\
			\cmidrule(lr){1-2} \cmidrule(lr){3-6} \cmidrule(lr){7-10} \cmidrule(lr){11-14} \cmidrule(lr){15-19}
			all & all& 1.0 & 1.6 & 0.4 & 0.5 & 6.6 & 29.4 & 12.2 & 21.7 & 58.0 & 0.1 & 0.1 & 0.2 & 137 & 175 & 248 & 293 & 312  \\
			\bottomrule
	\end{tabular}}{}
\end{table}

\cref{tab:FSANoBC:char:gen:lb:50} shows that the separation and addition of valid inequalities increases the lower bound by 3.3\%, with FSECs and TIFIs having the largest impact (1.5\% and 1.3\%). The results, thereby, underestimate the impact of FSECs, since a subset of FSECs is already included at the start of \solalg (see \cref{sec:compres}). On average, the contribution of valid inequalities to the lower bound increases with the number of temporal dependencies.
FSECs have the greatest impact on instances with looser time window and capacity restrictions, while TIFIs, TDIFIs, and RCCs have a greater impact for more restrictive instances. As expected, RCCs do not improve the lower bound when capacity constraints are non restrictive, i.e., for instances of type 2.  
The number of added valid inequalities increases with the number of temporal dependencies, which is in line with their impact on the lower bound. More TIFIs, TDIFIs, and RCCs are added for the more restrictive type 1 instances, but the average number of added inequalities remains limited. The separation time of FSECs is on average almost 30 seconds for the instances with the largest number of temporal dependencies, while the separation time of the remaining sets of valid inequalities is negligible. Adding valid inequalities increases the average computation time of the column-and-row generation from 61 to 98 seconds. The reported separation time of the FSECs thereby exceeds the difference in computation time between noRG and FS, as it represents the cumulative separation time during the lower bound computation. 

Similar observations can be made for the results on instances with $75$ tasks, i.e., from \cref{tab:FSANoBC:char:gen:lb:75}. Compared to the results on the smaller instances with 50 tasks, the impact of the TIFIs and RCCs on the lower bound increases, while that of the FSECs decreases. Moreover, the number of added valid inequalities increases for all types. Separating FSECs consumes significantly more time, particularly for instances with the largest number of dependencies. The total computation time of the column-and-row generation increases from 98 to 312 seconds.

\paragraph{Enumeration and branch-and-cut}

\cref{tab:FSANoBC:char:enum:phase:50,tab:FSANoBC:char:enum:phase:75} summarize characteristics of the enumeration and branch-and-cut procedures for instances with 50 and 75 tasks. They report the average number of enumeration iterations (steps), the number of enumerated fragments (\#$_{\mathrm{frag}}$), the number of instances for which the number of enumerated fragments exceeds the maximum allowed number of fragments (\#$_{\maxfragenum}$), the fraction of instances for which the final primal bound is identified in Step~\ref{sol:proc:step:init:ub} (frac$_{\mathrm{P\ref{sol:proc:step:init:ub}}}$), the computation time in seconds (time), and the percentage of removed fragments during the last execution of the reduction techniques (rem). Moreover, TOT denotes all the procedures of Steps~\ref{sol:proc:step:init:ub}-\ref{sol:proc:recursion}, EA the enumeration algorithm, RB the fragment reduction technique based on reduced costs bounds, RD the fragment reduction techniques based on obtaining new dual costs, and BC the branch-and-cut (without adding cutting planes in \solalgNoBC).

\begin{table}
	\TABLE
	{Performance characteristics of Steps~\ref{sol:proc:step:init:ub}-\ref{sol:proc:recursion} of \solalgNoBC on instances with 50 tasks. \label{tab:FSANoBC:char:enum:phase:50}}
	{\begin{tabular}{rrccccccccccccc}
			\toprule  
			\multirow{2}{*}{\reltempdep} & \multirow{2}{*}{Type} & \multirow{2}{*}{steps} & \multirow{2}{*}{\#$_{\mathrm{frag}}$} & \multirow{2}{*}{frac$_{\mathrm{P\ref{sol:proc:step:init:ub}}}$} & \multirow{2}{*}{\#$_{\maxfragenum}$}  & \multicolumn{5}{c}{time} &  \multicolumn{2}{c}{rem} \\ 
			\cmidrule(lr){7-11} \cmidrule(lr){12-13}
			& & & & & & TOT & EA & RB & RD & BC & RB & RD  \\ \midrule
			0.05 & 1 & 1.5 & 80744 & 0.4 & 0 & 330 & 2 & 1 & 2 & 325 & 1.7 & 25.0  \\
			0.05 & 2 & 1.0 & 4516874 & 0.7 & 28 & 1260 & 894 & 3 & 35 & 329 & 4.8 & 26.5 \\ 
			\cmidrule(lr){1-2} \cmidrule(lr){3-6} \cmidrule(lr){7-11} \cmidrule(lr){12-13} 
			0.15 & 1 & 1.5 & 218504 & 0.5 & 0 & 1059 & 12 & 3 & 9 & 1036 & 3.6 & 25.5  \\
			0.15 & 2 & 1.1 & 1741423 & 0.5 & 4 & 1631 & 128 & 9 & 41 & 1453 & 9.2 & 31.7 \\
			\cmidrule(lr){1-2} \cmidrule(lr){3-6} \cmidrule(lr){7-11} \cmidrule(lr){12-13} 
			0.25 & 1 & 1.4 & 78224 & 0.5 & 0 & 1209 & 3 & 3 & 3 & 1200 & 4.4 & 23.8  \\
			0.25 & 2 & 1.1 & 658479 & 0.5 & 2 & 1677 & 27 & 11 & 11 & 1628 & 8.2 & 33.3 \\
			\cmidrule(lr){1-2} \cmidrule(lr){3-6} \cmidrule(lr){7-11} \cmidrule(lr){12-13} 
			all & all & 1.3 & 1176784 & 0.5 & 34 & 1183 & 172 & 5 & 16 & 990 & 5.3 & 27.5  \\			
			\bottomrule
	\end{tabular}}{}
\end{table}

\begin{table}
	\TABLE
	{Performance characteristics of Steps~\ref{sol:proc:step:init:ub}-\ref{sol:proc:recursion} of \solalgNoBC on instances with 75 tasks. \label{tab:FSANoBC:char:enum:phase:75}}
	{\begin{tabular}{rrccccccccccccc}
			\toprule  
			\multirow{2}{*}{\reltempdep} & \multirow{2}{*}{Type} & \multirow{2}{*}{steps} & \multirow{2}{*}{\#$_{\mathrm{frag}}$} & \multirow{2}{*}{frac$_{\mathrm{P\ref{sol:proc:step:init:ub}}}$} & \multirow{2}{*}{\#$_{\maxfragenum}$}  & \multicolumn{5}{c}{time} &  \multicolumn{2}{c}{rem} \\ 
			\cmidrule(lr){7-11} \cmidrule(lr){12-13}
			& & & & & & TOT & EA & RB & RD & BC & RB & RD  \\ \midrule 
			0.05 & 1 & 1.0 & 3680371 & 0.3 & 13 & 1504 & 189 & 1 & 73 & 1241 & 1.0 & 26.5 \\
			0.05 & 2 & 1.0 & 9779456 & 0.8 & 69 & 1887 & 1693 & 1 & 38 & 156 & 3.0 & 20.1 \\ 
			\cmidrule(lr){1-2} \cmidrule(lr){3-6} \cmidrule(lr){7-11} \cmidrule(lr){12-13}
			0.15 & 1  & 1.1 & 2739625 & 0.3 & 14 & 2158 & 71 & 1 & 45 & 2041 & 3.5 & 25.9   \\
			0.15 & 2 & 1.0 & 9963935 & 0.6 & 69 & 1040 & 458 & 2 & 43 & 537 & 4.9 & 25.4 \\
			\cmidrule(lr){1-2} \cmidrule(lr){3-6} \cmidrule(lr){7-11} \cmidrule(lr){12-13}
			0.25 & 1 & 1.1 & 1748731 & 0.3 & 8 & 2495 & 41 & 1 & 41 & 2412 & 4.2 & 23.6 \\
			0.25 & 2 & 1.1 & 6544145 & 0.5 & 38 & 1619 & 239 & 4 & 62 & 1315 & 5.8 & 27.6 \\
			\cmidrule(lr){1-2} \cmidrule(lr){3-6} \cmidrule(lr){7-11} \cmidrule(lr){12-13} 
			all & all & 1.1 & 5634861 & 0.4 & 211 & 1794 & 436 & 1 & 50 & 1306 & 3.7 & 24.9 \\
			\bottomrule
	\end{tabular}}{}
\end{table}

\cref{tab:FSANoBC:char:enum:phase:50} shows that \solalgNoBC requires, on average, 1.3 enumeration steps, during which 1\,176\,784 fragments are generated. The number of identified fragments is higher for type 2 instances with less restrictive time window and capacity restrictions. Interestingly, the initial upper bound obtained during Step~\ref{sol:proc:step:init:ub} is equal to the final primal bound for half of the instances, demonstrating the effectiveness of this branch-and-cut procedure. 
Moreover, in 34 out of 1008 instances, all of type 2, the enumeration procedure reaches the maximum number of fragments, after which the solution algorithm terminates. 
On average, \solalgNoBC spends the most time in the branch-and-cut procedures, which takes 990 seconds out of the total of 1183 seconds. Both the total computation time and share spent on branch-and-cut increase with the number of temporal dependencies. Only for instances with $\reltempdep=0.05$, the enumeration algorithm contributes most to the computation time. Fragment reduction techniques require, on average, only 21 seconds, while reducing the number of fragments by nearly 33\%. The fragment reduction technique based on resolving the linear program is the most effective, eliminating 27.5\% of the fragments. A larger proportion of fragments is removed for type 2 instances. 

The same trends are observed for instances with 75 tasks, as summarized in \cref{tab:FSANoBC:char:enum:phase:75}. Compared to the results on instances with 50 tasks, the average number of enumeration steps decreases from 1.3 to 1.1, while the number of enumerated fragments increases from 1\,176\,784 to 5\,634\,861. 
Notably, \solalgNoBC terminates early for roughly 20\% of the instances, compared to 3\% for instances with 50 tasks, because the enumeration procedure identifies more than $\maxfragenum$ fragments.  
The total computation time of Steps~\ref{sol:proc:step:init:ub}-\ref{sol:proc:recursion} increases from 1183 to 1794 seconds, with particularly large increases for instances with tight time window and capacity restrictions. The enumeration procedure also consumes significantly more time than for instances with 50 tasks, whereas fragment reduction techniques remain relatively fast and still remove nearly 30\% of the identified fragments.

\end{APPENDICES}








  



\end{document}